\providecommand{\keywords}[1]{\textbf{\textit{Key words and phrases }} #1}
\providecommand{\subjclass}[1]{\textbf{\textit{2020 Mathematics Subject Classification.}} #1}
\pgfplotsset{width=7cm, compat=1.10}
\theoremstyle{definition}
\newtheorem{theo}{Theorem}[subsection]
\newtheorem{theore}{Theorem}[section]
\newtheorem{pr}[theo]{Proposition}
\newtheorem{prop}[theore]{Proposition}
 \newtheorem{lem}[theo]{Lemma}
\theoremstyle{remark}
\newtheorem{rema}[theo]{Remark}
\theoremstyle{definition}
\newtheorem{defi}[theo]{Definition}
\numberwithin{equation}{subsection}
\newcommand\ca{{\mathcal{A}}}
\newcommand\cp{\mathcal{P}}
\newcommand\cu{\underline{C}}
\newcommand\du{\underline{D}}
\newcommand\au{\underline{A}}
\newcommand\bu{\underline{B}}
\newcommand\hf{{\underline{HF}}}
\newcommand\wstu{w_{st}}
\newcommand\tst{t^{st}}
\newcommand\kw{K_{\mathfrak{w}}}
\newcommand\chow{\operatorname{Chow}}
\newcommand\chowm{\mathfrak{Chow}}
\newcommand\dm{DM}
\newcommand\q{{\mathbb{Q}}}
\newcommand\obj{\operatorname{Obj}}
\newcommand\hw{{\underline{Hw}}}
\newcommand\ab{\operatorname{Ab}}
\newcommand\ql{{\mathbb{Q}_{\ell}}}
\newcommand\zop{\mathbb{Z}[1/p]}%\big[1/p\big]}}%\frac{1}{p}\big]}}
\newcommand\z{{\mathbb{Z}}}
\newcommand\ii{\mathcal{I}}
\newcommand\wchow{w_{\chow}}
\newcommand\hwchow{\hw_{\chow}}
\newcommand\grwc{Gr_{W}}
\newcommand\ns{\{0\}}
\newcommand\mgcq{M_{gm}^{c,\q}}
\newcommand\mgcr{M_{gm}^{c,R}}
\newcommand\spe{\operatorname{Spec}}
\newcommand\com{\mathbb{C}}
\newcommand\id{\operatorname{id}}
 \newcommand\lan{\langle}
\newcommand\ra{\rangle}
\newcommand\bl{\bigl(} \newcommand\br{\bigl)}
\newcommand\var{\operatorname{Var}}
\newcommand\schpr{\operatorname{SchPr}}
\newcommand\sv{\operatorname{SmVar}}
\newcommand\spv{\operatorname{SmPrVar}}
\DeclareMathOperator\kar{\operatorname{Kar}}
\DeclareMathOperator\co{\operatorname{Cone}}
\DeclareMathOperator\prli{\varprojlim}
\DeclareMathOperator\inli{\varinjlim}
\newcommand\hu{\underline{H}}
\newcommand\dmerb{DM^{eff}_{R}}
\newcommand\dmrb{DM_{R}}
\newcommand\dmrbl{DM_{R}(K\perf)}
 \newcommand\dmcdh{\dm_{cdh}}
\newcommand\dmrr{DM^{r}_{R}{}}
\newcommand\mgr{M_{R}}
\newcommand\mgrc{M^{c}_{R}}
\newcommand\chower{\underline{\chow}^{eff}_R}
\newcommand\chowgri{\operatorname{CH}}
\newcommand\chw{\operatorname{CWH}}
\newcommand\dmger{{DM^{\,\, {eff}}_{\scalebox{0.7}{gm,R}}}{}}
\newcommand\dmer{DM_{R}^{eff}{}}
\newcommand\dmerpl{DM_{R\ \wchow+}^{eff}}
\newcommand\dmerm{DM_{R-}^{eff}{}}
\newcommand\dmeq{DM_{\q}^{eff}{}}
\newcommand\dmgep%{DM^{eff}_{gm,\zop}}  %
\newcommand\dmgepr %{DM^{eff,R}_{gm,\zop}} %
\newcommand\dmgepq %{DM^{eff,\q}_{gm,\zop}} %
\newcommand\thomr{t_{hom}^R}
\newcommand\choweq{\underline{\chow}^{eff}_\q}
\newcommand\dmgeq{DM^{eff}_{gm,\q}{}}
\newcommand\perf{{}^{perf}}
\newcommand\dmr{\dm_R}
\newcommand\dmrwp{\dm_{R,w+}^{eff}{}}
\newcommand\afo{\mathbb{A}^1}
\begin{document}

\title{On Chow-weight homology of motivic complexes and its relation to motivic homology} 
 \author{Mikhail V. Bondarko, David Z. Kumallagov   \thanks{ %%!!!!
 %The main results of the paper were  obtained under 
The work of the authors on section 3 of the paper was supported %of
 by  the Russian Science Foundation,  grant no. 16-11-10200. %\newline
 %The %reported study of the 
% The work of the  second author on 
Section  1 and 2 were written by D. Kumallagov; this work was funded by RFBR, project number 19-31-90074.}}

\maketitle
\begin{abstract}
In this paper we study in detail the so-called Chow-weight homology of Voevodsky motivic complexes and relate it to motivic homology. %This allows us to
 We  generalize earlier results and  prove that the vanishing of higher motivic homology groups of  a motif  $M$ implies similar vanishing for its Chow-weight homology along with effectivity properties of the higher terms of  its weight complex $t(M)$ and of  higher Deligne weight quotients of its cohomology.  
Applying this statement to motives with compact support we obtain a similar relation between the vanishing of Chow groups and the cohomology with compact support of %arbitrary 
 varieties. Moreover, we prove that if higher motivic homology groups of a geometric motif  or a variety over a universal domain are torsion (in a certain "range") then the exponents of these groups are uniformly bounded. 

%The results of the current text allow to
 To prove our main results we study Voevodsky slices of motives. Since the slice functors do not respect the compactness of motives, the results of  the previous Chow-weight homology paper are not sufficient for our purposes; %so we 
 this is our main reason to extend them to ($\wchow$-bounded below) motivic complexes.
\end{abstract}
\subjclass{Primary: 14C15, 14F42, 18G80, 19E15, 18E40. Secondary:  14C30, 14F20,  18E35.}

\keywords{Motives, triangulated categories, Chow groups, weight structures, Chow-weight homology, Deligne filtration, effectivity}

\tableofcontents

\section*{Introduction}

This paper is a continuation of  \cite{bsoscwhn} (see also the slightly different \cite{bsoscwho}). In these papers an extension of the well-known decomposition of the diagonal theory\footnote{Recall that this theory originates from \cite{blect}. Some of it was recalled and discussed in Propositions 0.4, 4.3.1, and 4.3.4, and Remarks %Remark 
0.5(1), %Proposition 4.3.1, 
3.3.9(1) and %Remark 
4.3.2(2) %, and Proposition 4.3.4
  of \cite{bsoscwhn}.} to geometric Voevodsky motives and varieties was proposed; its  main  tool were the  new {\it Chow-weight homology} theories. Since the main purpose of %that paper
 these papers  was the study of varieties, %only  the 
 Chow-weight homology   was only defined on the categories of %$\dmger$ of effective 
 geometric motives.

In the current text % paper 
 we demonstrate that it makes sense to study Chow-weight homology of objects of the bigger category $\dmer$ of %(unbounded) 
 $R$-linear motivic complexes as well. So, we extend the main results of %ibid.
  \cite{bsoscwhn}  to the subcategory $\dmerpl$ of $\dmer$ (here $R$ is the coefficient ring) that consists of objects that are bounded below with respect to the {\it Chow weight structure}. This enables us to generalize Corollary 3.4.2  of  \cite{bsoscwhn} and prove that the vanishing of higher motivic homology groups of a motif $M$ is equivalent to similar vanishing for Chow-weight homology %along with 
of $M$	 and also to the corresponding effectivity properties of higher terms of the Chow-weight complex $t(M)$ of $M$. % and of higher Deligne weight quotients.  
 The difference with loc. cit. is that we are able to treat motivic homology of positive dimensions (that correspond to complexes of algebraic cycles of dimension $j>0$) in the corresponding Theorem \ref{main} below. The proof of that theorem uses Voevodsky slices of motives; thus it is necessary to consider Chow-weight homology and conditions related to it %CORRECTION
 for objects of $\dmer$  that are not geometric.

Next we apply Theorem \ref{main} to extend some more results of ibid. We prove that if higher motivic homology groups of a geometric motif  $M$ %or a variety 
 over a universal domain are torsion (in a certain "range") then the exponents of these groups are uniformly bounded. 
Moreover,  we apply Theorem \ref{main} to motives with compact support of varieties (cf. Corollary 4.2.3 of  ibid.); we obtain  that if certain Chow groups of a variety $X$ over a universal domain $K$ are torsion then they are of bounded exponent. 
 %Next, 
Arguing similarly to Theorem 4.2.1 ibid. we also obtain that %relation between
 %the vanishing of 
 %if certain Chow groups of a variety $X$ over a universal domain $K$ are torsion then they are of bounded exponent, 
 this torsion assumption implies certain effectivity conditions for the  cohomology of $X$ with compact support; see Theorem \ref{tmgc}. % of arbitrary varieties. Moreover, 

%dedicated to the study of 

Now we describe the contents of the paper. More details can be found at the beginnings of sections.

In \S\ref{sprel} we recall %some preliminary statements on 
 several properties of (smashing) weight structures, Voevodsky's motivic category $\dmer$ and its localizations,  and Chow weight structures on them.

In \S\ref{scwh} we %extend certain results of \cite{bsoscwhn} and 
 define Chow-weight homology  on $\dmer$, and extend its properties (as studied in \cite{bsoscwhn}) %of Chow-weight homology
 to the subcategory $\dmerpl\subset \dmer$ of  Chow-bounded below objects. % of  $\dmer$. 
 This enables us to prove the main Theorem \ref{main}; it says  that the vanishing of higher motivic homology groups of an object $M$ of $\dmerpl$ (over all function field extension of the base field $k$) is equivalent to the similar vanishing for Chow-weight homology of $M$. %it 
 This vanishing is also equivalent to certain effectivity assumptions %along with effectivity of higher terms of Chow-weight complexes.
on  the weight complex $t(M)$ (that is, its higher terms should be "big Chow motives" that are "effective enough"); moreover, it has a "coproduct and extension-closure" re-formulation. 

%the category
%along with its relation to motivic homology to 

In \S\ref{sappl} we apply Theorem \ref{main} to geometric motives and combine it with the results and arguments of ibid. Firstly we prove 
that if higher motivic homology groups of a geometric motif  over a universal domain are torsion (in a certain "range") then the exponents of these groups along with the related Chow-weight homology ones are uniformly bounded. Next, we combine earlier results with the properties of motives with compact support to obtain the aforementioned  Theorem \ref{tmgc}. 

In %\S
 Appendix \ref{sapp} we prove some properties of motives that are necessary both for the current paper and for \cite{bsoscwhn}. They appear to be well-known even though the authors were not able to find them in the literature.

\section{Preliminaries}\label{sprel}

In \S\ref{snotata} we recall some %categorical 
 definitions; they are mostly related to  ({\it smashing}) triangulated categories.

In \S\ref{smot} we recall some basics on ($R$-linear) Voevodsky motives over a perfect field $k$. % (with coefficients in a ring

In \S\ref{ssws} we recall  basic definitions and statements on weight structures. 

In \S\ref{sexw} we discuss {\it purely compactly generated} weight structures and the weight structures they induce on ("purely compactly generated") localizations.

In \S\ref{swc} we recall some of the theory of {\it strong weight complexes}, {\it pure} (homological) functors and weight spectral sequences.

In \S\ref{swchow} we apply the general theory to the category $\dmer$ and its localizations; this gives certain Chow weight structures whose hearts are "generated" by Chow motives.

\subsection{Some notation and conventions}\label{snotata}

\begin{itemize}
\item For $a\le b\in \z$ we will write $[a,b]$ (resp. $[a,+\infty)$, resp.  $[a,+\infty]$) for the set $\{i\in \z:\ a\le i \le b\}$ (resp. $\{i\in \z:\ i\ge a \}$, resp. $[a,+\infty)\cup \{+\infty\}\subset \z\cup  \{+\infty\}$); we will never consider  real line segments in this paper. Respectively, when we write $i\ge c$ (for $c\in \z$) we mean that $i$ is an integer satisfying this inequality.%CORRECTION

\item Given a category $C$ and  $X,Y\in\obj C$  we will write $C(X,Y)$ for  the set of morphisms from $X$ to $Y$ in $C$.

\item For categories $C',C$ we write $C'\subset C$ if $C'$ is a full %strict
subcategory of $C$.

\item Given a category $C$ and  $X,Y\in\obj C$, we say that $X$ is a {\it
retract} of $Y$ %(and $Y$ is {\it coretract} of $X$)
 if $\id_X$ can be %factorized as $X\stackrel{i}{\to} Y\stackrel{p}{\to}X$
 factored through $Y$.\footnote{Clearly, if $C$ is triangulated or abelian, %{del}
then $X$ is a retract of $Y$ if and only if $X$ is its direct summand.}\

\item  Let  %n additive?? 
 $\hu$ be a subcategory of an additive category $C$ 
%$\hu\subset C$ a subcategory $\hu$ is called

 Then $\hu$ is said to be {\it retraction-closed}   in $C$ if it contains all retracts of its objects in $C$.%CORRECTION
Moreover,  the full subcategory $\kar_{C}(\hu)\subset C$ %of and additive category $C$ 
 whose objects are the retracts of objects of a subcategory $\hu$ (in $C$) will be called the {\it retraction-closure} of $\hu$ in $C$. 
\end{itemize}
 
Let us now list some less common definitions and conventions. Below  the symbol $\cu$ below will always denote some triangulated category;
%not always. ``unless stated otherwise''?
usually it will be endowed with a {\it weight structure} $w$ (see Definition \ref{dwstr} below). % (see \S1.2 of \cite{brelmot}).

\begin{defi}\label{dfir}
Let $\bu$ be an additive category.
\begin{enumerate}
\item\label{ifac} 
We call a category $\frac {\bu} {\hu}$ the {\it factor} %of an additive category $A$
 $\bu$ by its full additive subcategory $\hu$ if $\obj \bl \frac {\bu} {\hu} \br=\obj
\bu$ and $(\frac {\bu} {\hu})(X,Y)= \bu(X,Y)/(\sum_{Z\in \obj \hu} \bu(Z,Y) \circ \bu(X,Z))$.

\item %For an additive category $\bu$ we  
 We will write $K(\bu)$ for the homotopy category of (cohomological) complexes over $\bu$. %Its full subcategory of bounded complexes will be denoted by $K^b(\bu)$.??
  We will write $M=(M^i)$ if $M^i$ are the terms of the complex $M$.

\item For any  $A,B,C \in \obj\cu$ we  say that $C$ is an {\it extension} of $B$ by $A$ if there exists a distinguished triangle $A \to C \to B \to A[1]$.

\item A class $D\subset \obj \cu$ is said to be  {\it extension-closed}
    if it %??!!
     %for any distinguished triangle $X\to Y\to Z$ in $\cu$ we have the following: $X,Z\in D\implies Y\in D$. 
		is closed with respect to extensions and contains $0$. We  call the smallest extension-closed subclass 
of objects of $\cu$ that  contains a given class $B\subset \obj\cu$   the {\it extension-closure} of $B$. 

%Moreover, we will call  the smallest extension-closed retraction-closed subclass  of objects of $\cu$ that  contains $B$ the {\it envelope} of $B$. 

\item Given a class $D$ of objects of $\cu$ we will write $\lan D\ra$ or $\lan D\ra_{\cu}$ for the smallest full retraction-closed
triangulated subcategory of $\cu$ containing $D$. We   call  $\lan D\ra$  the triangulated category {\it densely generated} by $D$.

\item For $X,Y\in \obj \cu$ we  write $X\perp Y$ if $\cu(X,Y)=\ns$. For
$D,E\subset \obj \cu$ we write $D\perp E$ if $X\perp Y$ for all $X\in D,\
Y\in E$.
Given $D\subset\obj \cu$ we  will write $D^\perp$ for the class
$$\{Y\in \obj \cu:\ X\perp Y\ \forall X\in D\}.$$
%Sometimes we will denote by $D^\perp$ the corresponding full subcategory of $\cu$. 
Dually, ${}^\perp{}D$ is the class $\{Y\in \obj \cu:\ Y\perp X\ \forall X\in D\}$.

\item\label{ineg} %Let $\hu$ be a full subcategory of a triangulated category
 Assume that $\bu \subset \cu$. We  say that $\bu$ is {\it connective} (in $\cu$) if $\obj \bu\perp (\cup_{i>0}\obj (\bu[i]))$.

\item Given $f\in\cu (X,Y)$, where $X,Y\in\obj\cu$, we  call the third vertex
of (any) distinguished triangle $X\stackrel{f}{\to}Y\to Z$ a {\it cone} of
$f$.\footnote{Recall %{del}
that different choices of cones are connected by non-unique isomorphisms.}\

%\item Note yet that we will call any (covariant) homological functor (from a triangulated category) a homology theory. Consequently, for a complex $A=(A^i,d^i)$ of abelian groups we  call the quotient $\ke d^i/\imm d^{i-1}$ the $i$-th {\bf homology} of $A$. In particular, we use this "cohomological" convention for the Chow-weight homology theory; respectively, we will write $i$ as an upper index in the corresponding notation.

%On the other hand, if $H$ is a homological functor then we will use the notation $H_i$ for $H\circ [-i]$.
\end{enumerate}
\end{defi}

%All coproducts in this paper will be small.

Let us now list some definitions related to smashing triangulated categories.

\begin{defi}\label{desmash}
Assume that $\cp$ is a class of objects of $\cu$, and $\cu$ is {\it smashing}, that is, %$\cu$ is 
closed with respect to (small) coproducts.

\begin{enumerate}
\item\label{idsmash} 
We  say that %a subcategory 
 a class of objects or a full subcategory of $\cu$ is smashing (in $\cu$) if it is closed with respect to $\cu$-coproducts.

\item\label{idlocal}
We  say  that a full %strict %triangulated 
subcategory $\du\subset \cu$ is  {\it localizing}  whenever it is  triangulated and smashing. %closed with respect to $\cu$-coproducts. 

Respectively, we  call the smallest localizing  subcategory of $\cu$ that contains %a given class $\cp\subset \obj \cu$  
 $\cp$ the {\it  localizing subcategory of $\cu$ generated by $\cp$}.  % we will write $\lan \cp\ral$ (resp. $\lan \cp \rab$) for this subcategory.

\item\label{idhull}
If  $\hu$ is a %be a (not necessarily additive) 
subcategory of $\cu$ %an additive category $\bu$.We will call
 %of  the full additive subcategory of $\bu$ 
 then we  call the full subcategory of $\cu$ whose objects are %all retracts of %arbitrary (small) 
 the retracts of coproducts of objects of $\hu$ in $\cu$ the {\it coproductive hull} of $\hu$ (in $\cu$); we will use the notation $\hu^{\widehat\oplus}$ for it.

 \item\label{icompact} An object $M$ of $\cu$ is said to be {\it compact} if %and only if  if $\cu'$ is smashing and 
 the functor $H^M=\cu(M,-):\cu\to \ab$ respects coproducts. 

 \item\label{icompactg}  We  say that $\cu$ is {\it compactly generated} by $\cp$ if $\cp$ is %{\bf set} 
 an essentially small class of compact objects of $\cu$ that generates $\cu$ as its own localizing subcategory.
\end{enumerate}
\end{defi}

\subsection{On Voevodsky motivic complexes and certain localizations}\label{smot}

We start with some preliminaries and notation for motivic complexes.\\
In this section $k$ will denote a fixed perfect base field of characteristic $p$,
and we set $\zop=\z$ if $p=0$.

The set of smooth projective varieties over $k$ will be denoted by $\spv$.

\begin{itemize}

\item For a (fixed) unital commutative associative %ring (or 
 $\zop$-algebra $R$ %being %a fixed 
we consider the $R$-linear motivic categories $\dmger \subset \dmerb\subset \dmrb$ (see \cite[\S4]{bokum}). The categories $\dmerb$ and $\dmrb$  %is %closed with respect to small coproducts, % (and consequently, idempotent complete);
  are smashing (see Definition \ref{desmash}), and the embedding $\dmer\to \dmrb$ respects coproducts.  Moreover, $\dmerb$  is compactly generated by its triangulated subcategory $\dmger$ of effective geometric motives. % (see Definition \ref{desmash}). 
	
	%A basic part of the construction of motives 
	 \item There is a functor $\mgr$ %(of the $R$-motive)??
 ($R$-motif) from the category of smooth $k$-varieties into $\dmger$. Actually, $\mgr$ extends to the category of all $k$-varieties (see \cite{1} and \cite{kellyast}); yet we will  mention this extension just a few times.  We will write $R$ for the object $\mgr(\spe k)$.
 %never??????
	
	Moreover, $\dmger$ is  densely 
%actually compactly
  generated (see \S\ref{snotata}) by the $R$-linear motives $M_{R}(\spv)$ (see Theorem 2.1.2 of ibid.); hence the set $M_{R}(\spv)$  compactly generates $\dmerb$ as well. 

\item We will write $\chower$ % \subset\dmger$ % \subset \dmgmr$ 
 %will denote the category 
 for the Karoubi-closure in $\dmerb$ of the subcategory whose object class equals $M_{R}(\spv)$. % are $R$-motives of smooth projective varieties;  %We will call 
 $\chower$ will be called the category of $R$-linear effective homological Chow motives; see %Proposition \ref{pcrulemma}(\ref{iсru0})  below or??????!
 Remark 1.3.2(4) of \cite{bokum} for a  justification of this terminology.

\item We also introduce the following notation: $R \lan 1 \ra$ will denote the $R$-linear Lefschetz object (this is $R(1)[2]$ in the notation of \cite{1}). For $i \ge 0$ and $M \in \obj \dmerb$ we will write $M \lan i \ra$ for the object $M \otimes_{\dmerb}(R \lan 1 \ra)^{\otimes i}$. 
 
Recall that the functor $- \lan i \ra$ is a full embedding of $\dmerb$ into itself; thus the essential image $\dmerb \lan i \ra$ of this functor is a full subcategory of $\dmer$ that is equivalent to  $\dmerb$ itself.

Moreover, $- \lan 1 \ra$ extends to an %invertible 
 exact %functor on 
 autoequivalence of $\dmrb$, and the corresponding %objects 
 class $M_{R}(\sv)\lan i \ra$ consists of compact objects for any $i\in \z$.

\item Note that for any $i \ge 0$, $R \lan i \ra$ is a retract of $M_{R}((\mathbb{P}^1)^{i})$; thus $\chower \lan i \ra \subset \chower$.

%\item For $ m \in \mathbb{Z}$ we will write $d_{\leq m} \dmerb$ for the localizing subcategory of $\dmerb$ generated by $\{M_{R}(X)\}$ for X running through smooth $k$-varieties of dimension at most $m$. We note that $d_{\leq m} \dmerb$ is compactly generated by $\{M_{R}(P)\}$ for $P$ running through smooth projective $k$-varieties of dimension $\leq m$ (Remark 2.2.3 of \cite{bsoscwh}).

\end{itemize}

%Let us also give some (less common)
We will also need the following definitions %and properties of 
 related to motives.

\begin{defi}\label{dchowm}
Let $K/k$ be a field extension, and $M$ an object of $\dmerb$.

1. %If $K$ if a field then 
Then $K\perf$ will denote the perfect closure of $K$.

2. %For a field extension $K/k$ and $M\in \obj \dmer$ %a %motive 
 %motive $M$ over $K$ we %will denote its image with respect to the corresponding base change functor by 
  We will use the notation $M_K$ for %its image with respect to the corresponding base change functor.
the image of $M$ with  respect to the  base field change functor $\dmer\to \dmer(K\perf)$; % corresponding to the extension $L\perf/k$; 
 see appendix %\S
 \ref{sapp} below for some information on functors of this type.

3 For %$M$ being an object $M$ of $\dmerb$ and
  $l,j\in \z$ %, and $j\ge 0$
  we define $\chowm_{j}(M_K,R,l)$ (resp. $\chowm_{j}(M_K,R)$) as the group $\dmrbl(R\lan j \ra[l],M_K)$ (resp. $\dmrbl(R\lan j \ra,M_K)$).\footnote{In \cite{bsoscwhn} the group $\dmrbl(R\lan j \ra[l],M_K)$ is denoted by $h_{2j+l,j}(M_K,R) $.}

4. For $i\ge -1$  we will write $\dmr^{i}$ for the Verdier quotient $\dmerb/\dmerb \lan i+1 \ra$. $l^{i}$ will denote the corresponding localization functor, and $\mgr^i=l^i\circ \mgr$.
\end{defi}

\begin{pr}\label{pvan}
Let $j,l\in \z$,  $r\ge 0$, and assume $j-r+l<0$.
Then %the following statements are valid.
%1. Let $N\in \obj \chowr$.  Then  $$\chowm_{j}(N_K,R,l)\cong \dmgr(K)(\widehat{N}_K,R\lan j \ra[-l]) %(K\perf)(P_K(d-j)[2d-2j-2l])$ $$ for any perfect field $K/k$, where $\widehat{N}$ is the Poincare dual of $N$ (in $\chowr\subset \dmgmr$).  1. For 
 for any $N\in \obj \chower$ and any %perfect 
 field extension $K/k$ we have   $\chowm_{j}(N_K\lan r\ra,R,l)=\ns$. % if $j-r+l<0$. % and $K$ is any field extension of $k$.
\end{pr}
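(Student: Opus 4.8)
The plan is to reduce to the motive of a single smooth projective variety, then to identify the group in question with a higher Chow group that vanishes for dimension reasons. First I would reduce to $N=\mgr(X)$ with $X\in\spv$. Indeed, $\chower$ is the Karoubi-closure in $\dmerb$ of the full subcategory with object class $\mgr(\spv)$, and a disjoint union of smooth projective varieties is again smooth projective; hence every $N\in\obj\chower$ is a retract of $\mgr(X)$ for a suitable $X\in\spv$. Since base change, the twist $-\lan r\ra$, and the (co)representable functor $\chowm_j(-,R,l)=\dmrbl(R\lan j\ra[l],-)$ all preserve retracts, the group $\chowm_j(N_K\lan r\ra,R,l)$ is a direct summand of $\chowm_j(\mgr(X)_K\lan r\ra,R,l)$; so it suffices to treat $N=\mgr(X)$. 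Writing $X_K=X\times_k K\perf$, which is smooth projective over $K\perf$ of dimension $d=\dim X$, the compatibility of the base change functor with $\mgr$ and with Tate twists (monoidality) identifies the relevant group with $\dmrbl(R\lan j\ra[l],\mgr(X_K)\lan r\ra)$.

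Next I would pass to motivic cohomology via duality. The object $\mgr(X_K)\lan r\ra$ is rigid, with Poincaré dual $(\mgr(X_K)\lan r\ra)^{\vee}\cong\mgr(X_K)\lan -d-r\ra$. Dualizing both arguments and then absorbing the twist by the autoequivalence $-\lan d+r\ra$ of $\dmrb$, I obtain
\[
\dmrbl(R\lan j\ra[l],\mgr(X_K)\lan r\ra)\cong\dmrbl(\mgr(X_K),R\lan d+r-j\ra[-l]),
\]
and the right-hand side is the motivic cohomology of $X_K$ computed by the higher Chow group $\chowgri^{\,d+r-j}(X_K,l;R)$. This step is where passing to the perfect field $K\perf$ and using that $R$ is a $\zop$-algebra matter: rigidity of smooth projective motives and the comparison of $\dmrbl$-morphism groups with higher Chow groups are available in precisely this setting.

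Finally, I would invoke the dimension vanishing for higher Chow groups: for $X_K$ of dimension $d$ one has $\chowgri^{\,b}(X_K,n;R)=\ns$ whenever $n<b-d$, since a codimension-$b$ cycle in $X_K\times\Delta^n$ would have negative dimension $d+n-b$. With $b=d+r-j$ and $n=l$ this bound reads $l<(d+r-j)-d=r-j$, i.e. $j-r+l<0$, which is exactly our hypothesis; hence the group vanishes. When $X$ fails to be equidimensional I would argue componentwise over $\mgr(X)=\bigoplus_i\mgr(X_i)$, noting that for the component $X_i$ of dimension $d_i$ the resulting condition is $l<(d_i+r-j)-d_i=r-j$, which is independent of $d_i$ and again equals $j-r+l<0$.

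The only genuine input is this dimension vanishing; everything else is formal — the reduction to $\mgr(X)$ by retracts, the compatibility of base change with $\mgr$ and Tate twists, and the applicability of rigidity and of the higher-Chow comparison over $K\perf$ with $R$-coefficients. Accordingly, I expect the one place demanding attention to be the index bookkeeping that matches the stated condition $j-r+l<0$ with the vanishing range $n<b-d$ after duality.
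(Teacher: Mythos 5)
Your proposal is correct and follows essentially the same route as the paper, which disposes of the statement by citing Proposition 2.3.3(2) of \cite{bsoscwhn} together with Proposition \ref{pmot}(\ref{ie1}): the base-change compatibility $f^*(\mgr(X))\cong \mgr(X_K)$ plus the "well-known properties of (Suslin or Bloch) cycle complexes", i.e., exactly the dimension-counting vanishing of higher Chow groups (equivalently, the standard vanishing $H^{m,b}_{mot}(X)=\ns$ for $m>b+\dim X$, which after your duality bookkeeping is precisely $j-r+l<0$). Your write-up merely makes the citation explicit — retract reduction to $\mgr(X)$, Poincar\'e duality over $K\perf$, and the higher-Chow comparison with $\zop$-algebra coefficients — and the index check matches the paper's hypothesis exactly.
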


\begin{proof}
%1. This is just the Poincare duality for Voevodsky motives; see Theorem 5.23 of \cite{degdoc}.
This is an easy consequence of the well-known properties of (Suslin or Bloch) cycle complexes along with Proposition \ref{pmot}(\ref{ie1}) below;  see  %Theorem 5.3.14 of \cite{kellyast}.
  Proposition 2.3.3(2) of \cite{bsoscwhn}. 
\end{proof}

\subsection{Weight structures: basic definitions and statements}\label{ssws}

Let us recall the definition of the  notion that is central for this paper.

\begin{defi}\label{dwstr}

I. A pair of subclasses $\cu_{w\le 0},\cu_{w\ge 0}\subset\obj \cu$ %(of {\it $w$-negative} and {\it $w$-positive} objects, respectively)
will be said to define a weight
structure $w$ for a triangulated category  $\cu$ if 
they  satisfy the following conditions.

(i) $\cu_{w\ge 0}$ and $\cu_{w\le 0}$ are %additive and 
retraction-closed in $\cu$ (i.e., contain all $\cu$-retracts of their objects).

(ii) {\bf Semi-invariance with respect to translations.}

$\cu_{w\le 0}\subset \cu_{w\le 0}[1]$, $\cu_{w\ge 0}[1]\subset
\cu_{w\ge 0}$.

(iii) {\bf Orthogonality.}

$\cu_{w\le 0}\perp \cu_{w\ge 0}[1]$.

(iv) {\bf Weight decompositions}.

 For any $M\in\obj \cu$ there
exists a distinguished triangle
%\begin{equation}\label{wd}
$$X\to M\to Y%\stackrel{f}
{\to} X[1]$$
%\end{equation} 
%\end{equation} 
such that $X\in \cu_{w\le 0},\  Y\in \cu_{w\ge 0}[1]$.\end{defi}

We will also need the following definitions related to triangulated categories and weight structures.

\begin{defi}\label{dwso}

Let $i,j\in \z$; assume that a triangulated category $\cu$ is endowed with a weight structure $w$. %??, and $\cu$ is another triangulated category.

\begin{enumerate}
\item\label{idh}
The full category $\hw\subset \cu$ whose objects class is 
$\cu_{w=0}=\cu_{w\ge 0}\cap \cu_{w\le 0}$ is called the {\it heart} of $w$.

\item\label{id=i}
 $\cu_{w\ge i}$ (resp. $\cu_{w\le i}$, resp.
$\cu_{w= i}$) will denote $\cu_{w\ge
0}[i]$ (resp. $\cu_{w\le 0}[i]$, resp. $\cu_{w= 0}[i]$).

\item\label{id[ij]}
$\cu_{[i,j]}$  denotes $\cu_{w\ge i}\cap \cu_{w\le j}$; clearly this class  equals $\ns$ if $i>j$. 

%??$\cu^b\subset \cu$ will be the category whose object class is $\cup_{i,j\in \z}\cu_{[i,j]}$.

%\item\label{idbo} We will  say that $(\cu,w)$ is {\it  bounded}  if $\cu^b=\cu$ (i.e., if $\cup_{i\in \z} \cu_{w\le i}=\obj \cu=\cup_{i\in \z} \cu_{w\ge i}$).

\item \label{lrbo} We will call $\cu_{w+}=\cup_{i\in \z} \cu_{w\ge i}$ %(resp. $\cup_{i\in \z} \cu_{w\le i}$) 
the class of $w-$bounded below %(resp., $w-$bounded above) 
objects of $\cu$. % (they will be denoted as $\cu_{+}$ and $\cu_{-}$ respectively).

\item\label{smash}
We will say that $w$ is {\it smashing} if  $\cu$ is smashing and the class $\cu_{w\ge 0}$ is %closed with respect to $\cu$-coproducts 
	  smashing (in it;  see Definition \ref{desmash}(\ref{idsmash} and cf.  Proposition  \ref{pbw}(\ref{leftsm}) below). %?????

\item\label{idwe}

	 Assume that a triangulated category $\cu'$ is endowed with a weight structures $w'$; let $F:\cu\to \cu'$ be an exact functor.

$F$ is said to be  {\it  weight-exact} (with respect to $w,w'$) if it maps $\cu_{w\le 0}$ into $\cu'_{w'\le 0}$ and
sends $\cu_{w\ge 0}$ into $\cu'_{w'\ge 0}$. 

\item\label{idrest}
Let $\du$ be a full triangulated subcategory of $\cu$.

We will say that $w$ {\it restricts} to $\du$ whenever the couple $(\cu_{w\le 0}\cap \obj \du,\ \cu_{w\ge 0}\cap \obj \du)$ is a weight structure on $\du$.

\item\label{degen}
We will say that $M$ is left (resp., right) $w$-degenerate if $M$ belongs to $\cap_{i \in \mathbb{Z}}\cu_{w\ge i}$ (resp. to $\cap_{i \in \mathbb{Z}}\cu_{w\le i}$).\\
Accordingly, $w$ is left (resp., right) non-degenerate if all left (resp. right) weight-degenerate objects are zero.
	\end{enumerate}

\end{defi}

\begin{rema}\label{rstws}

1. A  simple (and yet quite useful) example of a weight structure comes from the stupid
filtration on %$K^b(\bu)$ (or on 
$K(\bu)$ for an arbitrary additive category $\bu$.
In this case $K(\bu)_{w\le 0}$ (resp. $K(\bu)_{w\ge 0}$) is the class of complexes that are
homotopy equivalent to complexes  concentrated in degrees $\ge 0$ (resp. $\le 0$); see \cite[Remark 1.2.3(1)]{bonspkar}; this weight structure will be denoted by $w^{st}$.
 
 The heart of this weight structure is the retraction-closure  of $\bu$ %in  $K^b(\bu)$ (or 
in $K(\bu)$ (see \S\ref{snotata}). %, respectively). 

2. A weight decomposition (of any $M\in \obj\cu$) is (almost) never canonical. 

Still for any $m\in \z$ the axiom (iv) gives the existence of distinguished triangle \begin{equation}\label{ewd} w_{\le m}M\to M\to w_{\ge m+1}M \end{equation}  with some $ w_{\ge m+1}M\in \cu_{w\ge m+1}$ and $ w_{\le m}M\in \cu_{w\le m}$; we will call it an {\it $m$-weight decomposition} of $M$.

 We will often use this notation below (even though $w_{\ge m+1}M$ and $ w_{\le m}M$ are not canonically determined by $M$);
we will call any possible choice either of $w_{\ge m+1}M$ or of $ w_{\le m}M$ (for any $m\in \z$) a {\it weight truncation} of $M$.
Moreover, when we will write arrows of the type $w_{\le m}M\to M$ or $M\to w_{\ge m+1}M$ we will always assume that they come from some $m$-weight decompositions. % of $M$.  

3. In the current paper we use the ``homological convention'' for weight structures; 
it was previously used in \cite{wild} %\cite{bgern},    \cite{bkillw}, 
 %\cite{binters},  \cite{bonspkar},  %\cite{bpgws}, 	 \cite{bgn},
	and %in \cite{bwcomp}
	 in several papers of the authors, whereas in 
\cite{bws} %and in \cite{bger} 
 the ``cohomological convention'' was used. In the latter convention 
the roles of $\cu_{w\le 0}$ and $\cu_{w\ge 0}$ are interchanged, i.e., one
considers   $\cu^{w\le 0}=\cu_{w\ge 0}$ and $\cu^{w\ge 0}=\cu_{w\le 0}$. 
 
 We also recall that D. Pauksztello has introduced weight structures independently in \cite{konk}; he called them co-t-structures.

 %4. The orthogonality axiom (iii) in Definition \ref{dwstr} immediately yields that $\hw$ is connective in $\cu$ in the sense specified in  Definition \ref{dfir}(\ref{ineg}).  We will formulate a certain converse to this statement  in Proposition \ref{pexw} below.
 \end{rema}

\begin{pr} \label{pbw}
Let $\cu$ be a triangulated category, $n\ge 0$; we will assume 
that $w$ is a fixed 
weight structure on $\cu$.

\begin{enumerate}
\item\label{leftsm}
 $\cu_{w\le 0}$ is closed with respect to all coproducts that exist in $\cu$.

 \item\label{iort}
 $\cu_{w\ge 0}=(\cu_{w\le -1})^{\perp}$ and $\cu_{w\le 0}={}^{\perp} \cu_{w\ge 1}$.

%\item\label{i01}  $\cu_{[0,1]}$ consists exactly of cones of morphisms in $\hw$ (in $\cu$).

%\item\label{iwe} Assume that $w'$ is a weight structure on a triangulated category $\cu'$. Then an exact functor $F:\cu\to \cu'$ is weight-exact if and only if $F(\cu_{w=0})\subset \cu'_{w'=0}$.

 \item\label{iwd0} %For any weight decomposition of an
 If $M$ belongs to $ \cu_{w\ge -n}$ %and $m\ge 0$ %(see (\ref{wd})) 
 %we have (see Remark \ref{rstws}(3)) 
 then $w_{\le 0}M$ belongs to $ \cu_{[-n,0]}$.

%\item\label{itrun} If $m<l\in \z$ and $M\in \obj \cu$ then for any choice of arrows $w_{\le l}M\to M$ and $w_{\le m}(w_{\le l}M)\to w_{\le l}M$ that can be completed to an $l$-weight decomposition and an $m$-weight decomposition triangle (see Remark \ref{rstws}(2)) respectively,  the composition morphism   $w_{\le m}(w_{\le l}M)\to M$ can be completed to an  $m$-weight decomposition of $M$.

\item\label{ifactp} Assume that  $\du\subset \cu$ is a triangulated subcategory of $\cu$ such that $w$ restricts to a weight structure $w_{\du}$ on $\du$. Let $M\in \cu_{w \ge 0}$, $N\in \cu_{w=0}$, and assume that a morphism $f\in \cu(N,M)$ vanishes in the localization $\cu/\du$. 

Then $f$ factors through some object of $\hw_{\du}$.

%\item\label{ifactps} 
%Let $\du$ be a (full) triangulated subcategory of $\cu$ such that $w$ restricts to $\du$; % (i.e., the classes $\obj \du\cap \cu_{w\le  0}$ and $\obj \du\cap \cu_{w\ge  0}$ give a weight structure on $\du$); 
%let $M\in \cu_{w\le 0}$, $N\in \cu_{w\ge -n}$, %for some $n\ge 0$,
%and $f\in \cu(M,N)$. Suppose that $f$ factors through an object $P$ of $\du$, i.e., there exist $u_1 \in\cu(M,P)$ and $u_2 \in \cu(P,N)$ such that $f=u_2 \circ u_1$. 
%Then $f$ factors through an element of $\du_{[-n,0]}$.
\end{enumerate}
\end{pr}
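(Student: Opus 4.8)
The plan is to combine the standard description of zero morphisms in a Verdier quotient with two successive weight truncations carried out \emph{inside} $\du$; these truncations are available precisely because $w$ restricts to a weight structure $w_{\du}$ on $\du$. First I would recall the well-known fact that a morphism of $\cu$ dies in the localization $\cu/\du$ if and only if it factors through an object of $\du$: from $f=0$ in $\cu/\du$ one obtains some $s\in\cu(M,M')$ with $\co(s)\in\obj\du$ and $s\circ f=0$, and rotating the distinguished triangle $M\xrightarrow{s}M'\to\co(s)$ shows that $f$ factors through $\co(s)[-1]\in\obj\du$. Hence I may fix a factorization $f=b\circ a$ with $a\in\cu(N,D)$, $b\in\cu(D,M)$ and $D\in\obj\du$, and the whole task reduces to replacing $D$ by an object of $\hw_{\du}=\du_{w=0}$.

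I would first truncate $D$ from below inside $\du$: choose a weight decomposition $w_{\le -1}D\to D\xrightarrow{p}w_{\ge 0}D$ with $w_{\ge 0}D\in\du_{w\ge 0}$ and $w_{\le -1}D\in\du_{w\le -1}\subset\cu_{w\le -1}$. Applying $\cu(-,M)$ to this triangle, the term $\cu(w_{\le -1}D,M)$ vanishes because $M\in\cu_{w\ge 0}=(\cu_{w\le -1})^{\perp}$ by Proposition \ref{pbw}(\ref{iort}); the long exact sequence then makes $\cu(w_{\ge 0}D,M)\to\cu(D,M)$ surjective, so $b$ lifts along $p$ to some $b'\in\cu(w_{\ge 0}D,M)$ with $b=b'\circ p$. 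Writing $D'=w_{\ge 0}D$ and $a'=p\circ a$, I obtain $f=b'\circ a'$ with $D'\in\du_{w\ge 0}$.

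Next I would truncate $D'$ from above inside $\du$: choose a weight decomposition $w_{\le 0}D'\xrightarrow{i}D'\to w_{\ge 1}D'$. Since $D'\in\cu_{w\ge 0}$, Proposition \ref{pbw}(\ref{iwd0}) (taken with $n=0$) gives $w_{\le 0}D'\in\cu_{[0,0]}=\cu_{w=0}$, so $w_{\le 0}D'\in\obj\hw_{\du}$. Applying $\cu(N,-)$, the term $\cu(N,w_{\ge 1}D')$ vanishes because $N\in\cu_{w\le 0}={}^{\perp}\cu_{w\ge 1}$, again by Proposition \ref{pbw}(\ref{iort}); hence $\cu(N,w_{\le 0}D')\to\cu(N,D')$ is surjective and $a'$ factors as $a'=i\circ a''$ for some $a''\in\cu(N,w_{\le 0}D')$. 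Consequently $f=(b'\circ i)\circ a''$ factors through the heart object $w_{\le 0}D'\in\obj\hw_{\du}$, which is exactly the assertion.

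Both lifting steps are immediate consequences of the long exact sequences attached to the truncation triangles, so I do not expect a genuine obstacle. The only points requiring care are the bookkeeping of the weight bounds, so that the correct one-sided orthogonality relation of Proposition \ref{pbw}(\ref{iort}) applies at each stage, and the systematic use of the hypothesis that $w$ restricts to $\du$, which is precisely what guarantees that both truncations can be performed with objects of $\du$ (and hence that the resulting heart object lies in $\hw_{\du}$ rather than merely in $\hw$).
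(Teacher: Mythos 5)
Your proof of part (\ref{ifactp}) is correct, and it is worth noting that the paper itself contains no direct argument for this proposition: parts (\ref{leftsm})--(\ref{iwd0}) are disposed of by citing \cite{bws}, and part (\ref{ifactp}) by citing Corollary 1.4.6(2) of \cite{bsoscwhn}. Your two-truncation argument reconstructs precisely what that citation hides, and each step checks out: the standard Verdier-quotient lemma gives a factorization $f=b\circ a$ through some $D\in\obj\du$; a $(-1)$-weight decomposition of $D$ chosen \emph{inside} $\du$ (available since $w$ restricts to $w_{\du}$) yields $\du_{w\le -1}\subset\cu_{w\le -1}\perp M$, so $b$ lifts to $D'=w_{\ge 0}D\in\du_{w\ge 0}$; a $0$-weight decomposition of $D'$ inside $\du$ yields $N\perp\du_{w\ge 1}\subset\cu_{w\ge 1}$, so $a'$ descends to $w_{\le 0}D'$, which lies in $\cu_{w=0}\cap\obj\du=\obj\hw_{\du}$. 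Three small observations: (i) for the two vanishing statements you only need the orthogonality axiom (iii) of Definition \ref{dwstr} (suitably shifted), not the full strength of part (\ref{iort}); (ii) in the last step it is slightly cleaner to apply part (\ref{iwd0}) with $n=0$ directly to the weight structure $w_{\du}$ on $\du$, which gives $w_{\le 0}D'\in\du_{[0,0]}=\obj\hw_{\du}$ in one stroke, though your route via $\cu_{[0,0]}$ plus membership in $\obj\du$ is equally valid because $\hw_{\du}$ is by definition the full subcategory on $\cu_{w=0}\cap\obj\du$; (iii) you leave parts (\ref{leftsm})--(\ref{iwd0}) unproved, which matches the paper's own treatment, and in any case (\ref{leftsm}) follows formally from (\ref{iort}) since any class of the form ${}^{\perp}E$ is closed under whatever coproducts exist in $\cu$.
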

\begin{proof}
%All these statements were 
 Assertions \ref{iort}--\ref{iwd0} were proved in \cite{bws} (pay attention to Remark \ref{rstws}(3)!). Assertion \ref{ifactp} is given by Corollary 1.4.6(2) of \cite{bsoscwhn}.
\end{proof}

\subsection{Some existence of weight structures statements}\label{sexw}  %On weight structures in localizations}

%Below we will consider certain {\it purely compactly generated} weight structures. So we recall the corresponding statements.
 
\begin{pr}\label{pexw}
Let $\bu$ be a connective additive subcategory of  a smashing (triangulated) $\cu$, and assume that objects of $\bu$ are compact in $\cu$.

Then there exists a %unique smashing 
weight structure $w$ on $\cu$ such that  $\cu_{w\le 0}$ (resp. $\cu_{w\ge 0}$) is the smallest subclass of $\obj \cu$ that is closed with respect to coproducts, extensions, and contains $\obj \bu[i]$ for $i\le 0$ (resp. for $i\ge 0$). Moreover, %$w$ is generated by $\obj \bu$ (in the sense of Definition \ref{dwso}(\ref{iwgen})) and????
  %$\cu_{w=0}$ is the class of $\cu$-retracts of all small $\cu$-coproducts of objects of $\bu$.
	$\hw=\bu^{\widehat\oplus}$ (see Definition \ref{desmash}(\ref{idhull})).
	
	In this case we will say that $w$ is { purely compactly generated} by $\bu$.
	
	Furthermore, if the objects of $\bu$ (compactly) generate $\cu$ as its own localizing subcategory then $w$ is left non-degenerate.
\end{pr}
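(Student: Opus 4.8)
The plan is to verify the axioms of Definition \ref{dwstr} for the two classes defined in the statement, writing $L=\cu_{w\le 0}$ and $R=\cu_{w\ge 0}$, and to invoke the generation hypothesis only at the very end, for left non-degeneracy. Semi-invariance (axiom (ii)) is immediate from the minimality built into the definitions: $R[1]$ is closed under coproducts and extensions and contains $\obj\bu[i]$ for $i\ge 1$, hence $R[1]\subset R$, and dually $L\subset L[1]$. For orthogonality (axiom (iii)) I would argue in two steps. Fixing a generator $\bu[a]$ with $a\le 0$, compactness makes the right orthogonal $(\obj\bu[a])^{\perp}$ closed under coproducts, and it is automatically closed under extensions; connectivity gives $\cu(\bu[a],\bu[c])=\ns$ for all $c\ge a+1$, so $(\obj\bu[a])^{\perp}$ contains $\obj\bu[c][1]$ for $c\ge 0$ and therefore all of $R[1]$. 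Fixing now $Y\in R[1]$, the left orthogonal ${}^{\perp}\{Y\}$ is (with no compactness needed) closed under coproducts and extensions and contains every $\bu[a]$, $a\le 0$, hence all of $L$; this yields $L\perp R[1]$. Retraction-closedness (axiom (i)) I would obtain by an Eilenberg-swindle argument: if $X$ is a retract of some $Y\in R$, then (using that a smashing category is idempotent-complete, so $Y\cong X\oplus X'$) the object $W=\coprod_{\mathbb{N}}Y\in R$ satisfies $W\cong X\oplus W$, and the resulting split inclusion produces a distinguished triangle exhibiting $X$ as an extension of $W[1]$ by $W$; as $R[1]\subset R$ this gives $X\in R$. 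The same argument with the shifts reversed (using $L[-1]\subset L$) exhibits a retract of an object of $L$ as an extension of $W$ by $W[-1]$, so $L$ is retraction-closed as well.

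The main obstacle is the existence of weight decompositions (axiom (iv)); this is the technical core of the construction of a purely compactly generated weight structure. For $M\in\obj\cu$ I would build the positive part by the standard homotopy-colimit killing tower: put $Y_0=M$ and, inductively, let $Y_{n+1}$ be a cone of the canonical morphism $\coprod_{i\le 0}\bu[i]^{(\cu(\bu[i],Y_n))}\to Y_n$, the coproduct running over all morphisms into $Y_n$ from shifts $\bu[i]$ with $i\le 0$. Set $Y_\infty$ to be the homotopy colimit of the $Y_n$ and let $X$ be the fibre of $M\to Y_\infty$. The telescope triangle assembles $X$ out of coproducts of the $\bu[i]$, $i\le 0$, so $X\in L$; and since each $\bu[i]$ is compact, the functor $\cu(\bu[i],-)$ commutes with the homotopy colimit, so every morphism $\bu[i]\to Y_\infty$ with $i\le 0$ factors through some $Y_n$ and is killed in $Y_{n+1}$. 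Thus $\cu(\bu[i],Y_\infty)=\ns$ for all $i\le 0$, and since $\{Z:\cu(Z,Y_\infty)=\ns\}$ is closed under coproducts and extensions and contains the generators, we get $L\perp Y_\infty$, i.e. $Y_\infty\in L^{\perp}$. The triangle $X\to M\to Y_\infty$ is then the required $0$-weight decomposition, with $X\in\cu_{w\le 0}$ and $Y_\infty\in\cu_{w\ge 1}=L^{\perp}$. It is exactly here that compactness is indispensable.

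It remains to identify the positive class and compute the heart. The equality $\cu_{w\ge 0}=R$ reduces, via the orthogonality characterization $\cu_{w\ge 0}=(\cu_{w\le -1})^{\perp}$ of Proposition \ref{pbw}(\ref{iort}), to the inclusion $(\cu_{w\le -1})^{\perp}\subset R$: given such a $Y$, its weight decomposition writes it (up to the vanishing of the map from the negative part, which holds by right-orthogonality to $\cu_{w\le -1}$) as a retract of its positive part, and a weight-tower argument presents that positive part as a homotopy colimit of iterated extensions of objects $\bu^{\widehat\oplus}[i]$, $i\ge 0$, hence as an object of $R$; retraction-closedness of $R$ then gives $Y\in R$. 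For the heart I would show $\hw=\bu^{\widehat\oplus}$ directly. The inclusion $\bu^{\widehat\oplus}\subset\hw$ holds because $\bu\subset\cu_{w=0}=L\cap R$ and $\cu_{w=0}$ is closed under coproducts (both $L$ and $R$ are) and retracts. Conversely, for $N\in\cu_{w=0}$ consider the evaluation $a\colon\coprod_{B\in\bu,\ \phi\colon B\to N}B\to N$; a long-exact-sequence computation using connectivity and compactness shows $\cu(\bu[i],\co(a))=\ns$ for all $i\le 0$, so $\co(a)\in\cu_{w\ge 1}$, whence $N\perp\co(a)$, the triangle splits, and $N$ is a retract of $\coprod B\in\bu^{\widehat\oplus}$.

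Finally, assume that $\bu$ generates $\cu$ as its own localizing subcategory; I claim $w$ is left non-degenerate. Let $M\in\cap_{i\in\z}\cu_{w\ge i}$. For each $j\in\z$ we have $\bu[j]\subset\cu_{w\le j}$ while $M\in\cu_{w\ge j+1}$, so orthogonality $\cu_{w\le j}\perp\cu_{w\ge j+1}$ forces $\cu(\bu[j],M)=\ns$; thus $M$ is right-orthogonal to every shift of every object of $\bu$. The class $\{Z\in\obj\cu:\cu(Z[i],M)=\ns\ \forall i\in\z\}$ is a localizing subcategory of $\cu$ containing $\bu$, so by the generation hypothesis it equals $\cu$; in particular $M$ belongs to it, whence $\cu(M,M)=\ns$ and $M=0$. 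This is the single place where generation is used, so that the existence of $w$ and the description of its heart hold without it. The chief difficulty throughout remains the homotopy-colimit construction of weight decompositions of the second paragraph, the rest being orthogonality bookkeeping and swindle-type manipulations.
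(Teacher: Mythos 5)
Much of your axiom-checking is sound: semi-invariance by minimality, the orthogonality $\cu_{w\le 0}\perp\cu_{w\ge 1}$ via compactness plus connectivity, the Eilenberg swindle for retraction-closedness, the computation $\cu(\bu[i],Y_\infty)=\ns$ for $i\le 0$, the splitting argument identifying the heart with $\bu^{\widehat\oplus}$, and the non-degeneracy argument all work (the paper itself gives no argument, only a citation of Corollary 2.3.1 and Lemma 2.3.3 of \cite{bsnew}). But the step you yourself call the technical core contains a genuine gap. Your killing tower gives $0=X_0\to X_1\to\cdots$ with $\co(X_n\to X_{n+1})=P_n$ a coproduct of objects $B[i]$, $i\le 0$, so each $X_n$ lies in $L$ by induction; however, $X$ is the \emph{homotopy colimit} of this tower, and the only triangle available is the Milnor one, $\coprod X_n\to\coprod X_n\to X\to(\coprod X_n)[1]$, which exhibits $X$ as an extension of $(\coprod X_n)[1]$ by $\coprod X_n$. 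Since $L$ satisfies $L\subset L[1]$ (closure under $[-1]$, not under $[1]$), this argument only places $X$ in $L[1]$, not in $L$. Whether the naive coproduct-plus-extension closure is stable under such countable homotopy colimits is exactly the nontrivial content of the cited results of \cite{bsnew}: there one works with \emph{strongly} extension-closed classes (big hulls, which are closed under these telescopes by fiat) and then proves, using compactness and connectivity in an essential way, that for connective compact generators the naive closure already suffices. Note the contrast with the positive class, where $R[1]\subset R$ renders the Milnor triangle harmless; this asymmetry is also why the analogous hocolim step in the proof of Theorem \ref{tstairs} (condition C implies D) is legitimate for the classes $D_{\ii}$, which are closed under $[1]$. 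Without this lemma, your decomposition only shows that the pair (strong closure of $\bu[\le 0]$, its orthogonal) is a weight structure, and both the stated description of $\cu_{w\le 0}$ and, with it, your heart computation are left unproven.

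The identification $\cu_{w\ge 0}=R$ has a gap of the same kind: "a weight-tower argument presents that positive part as a homotopy colimit of iterated extensions of objects $\bu^{\widehat\oplus}[i]$, $i\ge 0$" is an assertion, not an argument. One must build a cellular tower under an arbitrary $Y\in(\cu_{w\le -1})^{\perp}$ with cones that are coproducts of non-negative shifts of objects of $\bu$, and prove by a compactness computation that its homotopy colimit maps to $Y$ by an isomorphism; the cone of that comparison map is only killed by all $\bu[i]$, $i\in\z$, so it vanishes \emph{only if $\bu$ generates $\cu$}. This also refutes your closing claim that existence and the heart description hold without the generation hypothesis: for $\bu=0$ in a nonzero smashing $\cu$ one has $L=R=\ns$ while $(\cu_{w\le -1})^{\perp}=\obj\cu$, so axiom (iv) fails for the stated classes (your tower then yields $Y_\infty=M\in L^{\perp}$, but $L^{\perp}$ is not $R[1]$ --- conflating these two classes before proving $(\cu_{w\le -1})^{\perp}\subset R$ is where this slips in). Finally, a small point: the coproduct $\coprod_{i\le 0}\bu[i]^{(\cu(\bu[i],Y_n))}$ is set-indexed only if $\bu$ is essentially small, a hypothesis the proposition omits but which holds in all applications in the paper and is needed for the citation of \cite{bsnew} as well.
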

\begin{proof}
The statement easily follows from Corollary  2.3.1 and Lemma 2.3.3 of \cite{bsnew}; cf. Theorem 3.2.2(2,3) of \cite{bwcomp}.
\end{proof}

Now let us discuss certain weight structures in localizations.

\begin{pr}\label{winloc}
Assume that $(\cu,w,\bu)$ are as in the previous proposition; in addition,  $\bu$  is  essentially small and  generates $\cu$ as its own localizing subcategory, and   $\hu$ is an  additive subcategory of $\bu$. % that
   %is closed with respect to coproducts. Let $B \subset \obj \cu$ is a set (i.e., it is small) and  let $\hu \subset \cu$ be a small additive category, objects in which are compact and generates $\cu$ as its own localizing subcategory.
 Denote by $\du$ the localizing subcategory of $\cu$ generated by $\hu$. Then the following statements are valid.

1. The Verdier quotient category $\cu/\du$ exists (i.e., it is a locally small category); the localization functor $\pi: \cu \to \cu/\du$ respects  coproducts and converts compact objects into compact ones.
%and possesses a right adjoint $G$ that is a full embedding functor. Besides, 
 Moreover, $\cu/\du$ is generated by $\pi(\obj \bu)$ as its own localizing subcategory, and the corresponding %localization 
 exact functor  %$\lan\hu\ra_{\cu}\to \lan\bu\ra_{\cu}$ 
$\lan\bu\ra_{\cu}/\lan\hu\ra_{\cu}\to \cu/\du$ (where  $\lan\bu\ra_{\cu}/\lan\hu\ra_{\cu}$ is the Verdier quotient of the corresponding locally small categories) is a full embedding.%CORRECTIONS

%b.) Assume moreover that $\hu$ is connective. Then $\cu$ possesses a weight structure $w$, whose heart is equivalent to the coproductive hull of $\hu$ (see Definition \ref{desmash}(\ref{idhull})).

2. $\cu/\du$ possesses a weight structure $w_{\cu/\du}$ such that $\pi$ is weight-exact. Moreover, $w_{\cu/\du}$ is purely compactly generated by  its full subcategory  corresponding to $\bu$ (in the sense of Proposition \ref{pexw}), and the corresponding functor $\hw\to \hw_{\cu/\du}$ factors as the composition of  the obvious functor $\hw\to \hw/\hu^{\widehat\oplus}$ (see Definition \ref{dfir}(\ref{ifac})) with a full embedding.
%2%. The restrictions of $\pi$ to $\cu_{+}, \cu_{-}$, and $\cu^{b}$, respectively, yield full embeddings into $\cu/\du$ of the Verdier localizations $\cu_{+}, \cu_{-}$, and $\cu^{b}$, respectively.
\end{pr}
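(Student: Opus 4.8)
The plan is to build $w_{\cu/\du}$ by a direct application of Proposition \ref{pexw} to the localized category, and then to analyze the induced functor on hearts. First I would set up the candidate generating subcategory. By Part 1 of the present proposition, $\pi$ respects coproducts and sends compact objects to compact objects, so the full subcategory $\bu_{\cu/\du}\subset\cu/\du$ with object class $\pi(\obj\bu)$ consists of compact objects. To invoke Proposition \ref{pexw} I need $\bu_{\cu/\du}$ to be connective in $\cu/\du$ and to generate $\cu/\du$ as a localizing subcategory. The generation claim is exactly the final assertion of Part 1. Connectivity is the first thing to check: for $X,Y\in\obj\bu$ and $i>0$ I must verify $(\cu/\du)(\pi(X),\pi(Y[i]))=\ns$. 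This does \emph{not} follow formally from connectivity of $\bu$ in $\cu$, since $\pi$ need not be full on the relevant Hom-groups; I expect this to be the main obstacle, and I will return to it below.

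Granting connectivity, Proposition \ref{pexw} produces a weight structure $w_{\cu/\du}$ on $\cu/\du$ purely compactly generated by $\bu_{\cu/\du}$, with heart $\hw_{\cu/\du}=\bu_{\cu/\du}^{\widehat\oplus}$. Next I would prove that $\pi$ is weight-exact. It suffices to check that $\pi$ sends the generators $\obj\bu[i]$ ($i\le 0$) into $(\cu/\du)_{w_{\cu/\du}\le 0}$ and $\obj\bu[i]$ ($i\ge 0$) into $(\cu/\du)_{w_{\cu/\du}\ge 0}$; this holds by construction of $w_{\cu/\du}$ as the smallest coproductive extension-closed classes containing these images. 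Since $\cu_{w\le 0}$ (resp. $\cu_{w\ge 0}$) is itself the smallest coproductive extension-closed class containing $\obj\bu[i]$ for $i\le 0$ (resp. $i\ge 0$) by Proposition \ref{pexw}, and $\pi$ respects coproducts, extensions (being exact), and the chosen generators, weight-exactness propagates from the generators to all of $\cu_{w\le 0}$ and $\cu_{w\ge 0}$.

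It remains to describe the functor on hearts. Weight-exactness gives an additive functor $\hw\to\hw_{\cu/\du}$. By Proposition \ref{pbw}(\ref{ifactp}), for $N\in\cu_{w=0}$ and a morphism landing in $\cu_{w\ge 0}$ that dies in $\cu/\du$ (equivalently in the localization by $\du$, once I identify $\du$ with the appropriate weight-exact subcategory via the restriction of $w$ to $\du$), the morphism factors through $\hw_\du=\hu^{\widehat\oplus}$. This is precisely what is needed to show that the kernel of $\hw(X,Y)\to\hw_{\cu/\du}(\pi X,\pi Y)$ is the ideal $\sum_{Z\in\hu^{\widehat\oplus}}\hw(Z,Y)\circ\hw(X,Z)$ generated by maps through $\hu^{\widehat\oplus}$, so that the functor factors through $\hw/\hu^{\widehat\oplus}$ in the sense of Definition \ref{dfir}(\ref{ifac}). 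To see that the resulting functor $\hw/\hu^{\widehat\oplus}\to\hw_{\cu/\du}$ is a full embedding, I would combine fullness of $\pi$ on these Hom-groups (which follows from the density statement $\lan\bu\ra_\cu/\lan\hu\ra_\cu\hookrightarrow\cu/\du$ in Part 1, identifying $\hw,\hw_{\cu/\du}$ with retraction-closures of $\bu,\bu_{\cu/\du}$) with the kernel computation, and then pass to retraction-closures to cover the idempotent-completed hearts.

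The step I expect to fight hardest is connectivity of $\bu_{\cu/\du}$, since the positive Hom-groups $(\cu/\du)(\pi X,\pi Y[i])$ are computed as a filtered colimit over $\du$-localizing data and are not visibly zero. The cleanest route is to exploit that $w$ restricts to a weight structure $w_\du$ on $\du=\lan\hu\ra^{\text{loc}}$ (again via Proposition \ref{pexw} applied to $\hu\subset\du$), making the embedding $\du\hookrightarrow\cu$ weight-exact; then the negative-weight vanishing of the restricted structure, together with Proposition \ref{pbw}(\ref{iort}) and the orthogonality axiom in $\cu$, forces the colimit defining $(\cu/\du)(\pi X,\pi Y[i])$ to vanish for $i>0$, which yields both connectivity and the identification of $\pi$-localization with the quotient by the weight-exact subcategory $\du$ that Proposition \ref{pbw}(\ref{ifactp}) requires.
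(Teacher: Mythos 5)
The paper itself gives no argument here: it simply cites \cite{bos} (Proposition 4.3.1.3(III) and Theorem 4.3.1.4), so your attempt is necessarily a reconstruction rather than a parallel of the paper's text. For Part 2 your reconstruction is essentially the right one, and it matches the substance of the cited proof: one applies Proposition \ref{pexw} to $\hu\subset\du$ (the objects of $\hu$ remain compact and connective in $\du$, whose coproducts agree with those of $\cu$), observes that the minimality descriptions of the weight classes make the inclusion $\du\to\cu$ weight-exact, and then kills the Hom-groups $(\cu/\du)(\pi X,\pi Y[i])$, $i>0$, by straightening roofs: for a roof $X\xleftarrow{s}X'\xrightarrow{f}Y[i]$ with $\co(s)=D\in\obj\du$, the map $X\to D$ factors through $w_{\du\le 0}D$ because $X\in\cu_{w\le 0}$ is left orthogonal to $\du_{w_\du\ge 1}\subset\cu_{w\ge 1}$; after replacing $D$ by $w_{\du\le 0}D$, the new source is an extension of $X$ by $D[-1]\in\cu_{w\le -1}$, hence lies in $\cu_{w\le 0}$ and is orthogonal to $Y[i]\in\cu_{w\ge 1}$, so $f=0$. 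Your sketch compresses this but points at exactly these ingredients (including the verification, via the same orthogonality, that $w$ restricts to $\du$ in the sense of Definition \ref{dwso}(\ref{idrest}), so that Proposition \ref{pbw}(\ref{ifactp}) is applicable with $\hw_\du=\hu^{\widehat\oplus}$).

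There are, however, two genuine soft spots. First, you assume Part 1 outright, although it is part of the statement to be proved; it is standard Neeman--Thomason localization theory ($\hu$ is essentially small and consists of compacts, so $\du$ is compactly generated, the Bousfield localization exists, $\pi$ has a right adjoint and hence respects coproducts, images of compact generators are compact, and the full embedding of $\lan\bu\ra_{\cu}/\lan\hu\ra_{\cu}$ is the Neeman--Thomason theorem), but a blind proof must at least invoke these facts explicitly. Second, and more substantively, your fullness argument for $\hw/\hu^{\widehat\oplus}\to\hw_{\cu/\du}$ does not work as written: the Part-1 embedding identifies Hom-groups between compact objects with Hom-groups in a Verdier quotient, whose elements are roofs, so it gives no surjectivity of $\cu(X,Y)\to(\cu/\du)(\pi X,\pi Y)$; and since heart objects are retracts of arbitrary coproducts, your compactness reductions handle the source and target decompositions but not the lifting of roofs. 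The repair is the same trick as in your connectivity step: for a roof $X\xleftarrow{s}X'\xrightarrow{f}Y$ with cone $D$, first arrange $D\in\du_{w_\du\le 0}$ as above; then the restriction of $f$ to $D[-1]\in\cu_{w\le -1}$ vanishes because $Y\in\cu_{w\ge 0}$, so $f$ factors as $\tilde f\circ s$ with $\tilde f\in\cu(X,Y)$, i.e., $\pi$ is full on these Hom-groups. Combined with your (correct) kernel computation via Proposition \ref{pbw}(\ref{ifactp}), this completes the full embedding and brings your argument in line with the proof in \cite{bos} that the paper invokes.
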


\begin{proof}
 All these assertions were proved in \cite{bos} (see Proposition 4.3.1.3(III) and Theorem 4.3.1.4 of ibid.).
%2. This is exactly the Proposition 3.2.1(2) of \cite{bsnew}.
\end{proof}

\subsection{On weight complexes, pure functors, and weight spectral sequences}\label{swc} 
\label{swss}

Now we recall the theory of so-called "strong" weight complex functors. Note here that this version of the theory is less general than the "weak" one that was used in \cite{bsoscwhn}. The latter one is sufficient for our purposes (and is somewhat more convenient for them); yet it requires some non-standard definitions.

\begin{pr}\label{pwt}
%Assume that $\cu$ is endowed with a weight structure $w$. % and $h:M'\to \co(g)$ is the second side of a distinguished triangle containing $g$.
 Assume that $\cu$ possesses an $\infty$-enhancement (see \S1.1 of \cite{sosnwc} for the corresponding references), and  satisfies the assumptions  of  Proposition \ref{pexw}. %??; let  $\ca$ be an additive covariant %(resp., contravariant)
  %functor from $\hw$ into an abelian category $\au$.
%is endowed with a  weight stru
%bounded weight structure $w$.

Then there exists an exact functor $t^{st}:\cu\to K(\hw)$, $M\mapsto (M^i),$  such that the following statements are fulfilled. %that enjoys the following properties. 

\begin{enumerate}
\item\label{iwcbase} The composition of the embedding $\hw\to \cu$ with $t^{st}$ is isomorphic to the obvious embedding $\hw \to K(\hw)$.

%\item\label{irwcsh} Let $n\in \z$. Then $t\circ [n]_{\cu}\cong [n]_{\kw(\hw)}\circ t$, where  $[n]_{\kw(\hw)}$ is the obvious shift by $[n]$ (invertible) endofunctor of the category $\kw(\hw)$.

 \item\label{iwcfunct} Let $\cu'$ be a triangulated category that possesses an $\infty$-enhancement as well and is endowed with a compactly purely generated weight structure $w'$; let  $F:\cu\to \cu'$ be a weight-exact functor that lifts to $\infty$-enhancements. Then the composition $t'{}^{st}\circ F$ is isomorphic to $K(\hf)\circ t{}^{st}$, where 
$t'{}^{st}$ is the weight complex functor corresponding %to the restriction of ?!
 $w'$, and the functor $K(\hf):K(\hw)\to K(\hw')$ is the obvious $K(-)$-version of the restriction $\hf:\hw\to \hw'$ of $F$.

\item\label{iwcw}
%For any 
 Fix a choice of weight truncations $w_{\le i}N$ of $N\in \obj \cu$ (see Remark \ref{rstws}(2)) for $i\in \z$. Then there exist unique morphisms  $j_i:w_{\le i}N\to w_{\le i+1}N$ (for $i\in \z$) that make the corresponding triangles $w_{\le i}N\to w_{\le i+1}N\to N$ commutative. Moreover, the objects $\tilde N^{-1-i}=\co(j_i)[-1-i]$ belong to $\cu_{w=0}$ and there exists a complex $\tilde{t}(N)$  whose terms are $\tilde N^i$ (set in the corresponding degrees) and $\tilde{t}(N)\cong \tst(N)$  (in $K(\hw)$).

%Moreover, 
Furthermore, if %there exist 
$l\le m\in \z$ and % such that  
$w_{\le l}N=0$ %and $w_{\le m}N=N$ %and further?? hence bounded?? implies {iwcons}??
 then $w_{\le m}N$ belongs to the extension-closure of the set $\{\tilde N^j[-j],\ -m\le j\le -l\}$.

%????Moreover, the obvious modification of this statement  corresponding to contravariant weight-exact functors (cf. Proposition \ref{pbw}(\ref{idual})) is valid as well.

%\item\label{idualot} Assume that $\cu$ is a  stable symmetric monoidal $\infty$-category, and the tensor product restricts to $\hw$. Then the  functor $t^{st}$ is monoidal (with respect to the obvious tensor product on $K^b(\hw)$). 
%Moreover, if $\cu$ is rigid then the duality functor $\widehat$ sends $\hw$ into itself and 

\item\label{iwcons} If $M\in \cu_{w\le n}$ (resp. $M\in \cu_{w\ge n}$) then $t^{st}(M)$ belongs to $K(\hw)_{\wstu\le n}$ (resp. to $K(\hw)_{\wstu\ge n}$).

\item\label{iwcpu} Assume that $\ca$ is an additive covariant %(resp., contravariant)
  functor from $\hw$ into an abelian category $\au$. Then the functor $H^{\ca}$ %(resp. $H_{\ca}$)
	 that sends $M\in \obj \cu$ into the zeroth homology of the complex $\ca(M^i)$ %(resp. $\ca(M^{-i})$)  is (co)
	 is homological. % it respects coproducts if $\ca$ does and $\au$ is an AB4 abelian category.

Moreover, if $\ca=\ca'\circ \hf$ for some additive functor $\ca':\hw'\to \au$ in the setting of assertion \ref{iwcfunct} then  $H^{\ca}=H^{\ca'}\circ F$.

\item\label{iwcpuc} If $\au$ is an  AB4 abelian category then the functor $H^{\ca}$ as above is uniquely characterized by the following assumptions: it is homological, respects coproducts,  its restriction to the corresponding category $\bu$ (see Proposition \ref{pexw}) equals that of $\ca$, and its restrictions to $\bu[i]$ for $i\neq 0$ vanish.

%?? .+ cc???!! Moreover, this is the only  (co)homological functor (up to a unique isomorphism) whose restriction to $\hw$  equals $\ca$ and whose restrictions to $\hw[i]$ for $i\neq 0$ vanish. %this is the only (co)homological functor

%\item\label{iwcdet} Assume that $M$ is a $w$-bounded below object of $\cu$, $n\in \z$, and for any object $B$ of $\bu$, $i<n$, %$B\in \obj \bu$.
%and $\ca_B=\hw(B,-):\hw\to \ab$ we have $H^{\ca_B}(M[-i])=0$. Then $M\in \cu_{w\ge n}$.
\end{enumerate}
\end{pr}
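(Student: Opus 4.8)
The plan is to derive the existence of $\tst$ together with the structural assertions \ref{iwcbase}, \ref{iwcfunct}, and \ref{iwcons} from the theory of strong weight complex functors attached to an $\infty$-enhancement, as constructed in \cite{sosnwc}. The hypotheses of Proposition \ref{pexw} ensure that $w$ is purely compactly generated and that $\hw=\bu^{\widehat\oplus}$, which is precisely the input that construction needs; it produces a genuine exact functor $\cu\to K(\hw)$ rather than the merely "weak" weight complex. Assertion \ref{iwcbase} is then the normalization of $\tst$ on the heart, assertion \ref{iwcfunct} is the naturality of the $\infty$-categorical construction applied to a weight-exact $F$ lifting to enhancements, and assertion \ref{iwcons} records the weight-exactness of $\tst$ with respect to the stupid weight structure $w^{st}$ on $K(\hw)$ (Remark \ref{rstws}(1)), all of which are built into the output.

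For assertion \ref{iwcw} I would use the standard tower of weight truncations. The map $j_i$ is unique because any two choices differ by an element of $\cu(w_{\le i}N, w_{\ge i+2}N[-1])$, which vanishes by the orthogonality of Definition \ref{dwstr}(iii) (note $w_{\le i}N\in\cu_{w\le i}$ and $w_{\ge i+2}N[-1]\in\cu_{w\ge i+1}$); existence is the lifting provided by the same vanishing. Applying the octahedral axiom to the composable truncations identifies $\co(j_i)$ with an object of $\cu_{w=i+1}$, so $\tilde N^{-1-i}=\co(j_i)[-1-i]\in\cu_{w=0}=\obj\hw$; assembling these into $\tilde t(N)$ and identifying it with $\tst(N)$ in $K(\hw)$ is the weak-versus-strong comparison of \cite{sosnwc}. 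The extension-closure claim then follows by induction on $m-l$ from the triangles $w_{\le i}N\to w_{\le i+1}N\to\tilde N^{-1-i}[i+1]$, with base case $w_{\le l}N=0$.

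Assertion \ref{iwcpu} is then formal: writing $H^{\ca}=H_0\circ K(\ca)\circ \tst$, where $K(\ca)\colon K(\hw)\to K(\au)$ is the termwise application of $\ca$ and $H_0\colon K(\au)\to\au$ is zeroth homology, the composite is homological because $\tst$ and $K(\ca)$ are exact while $H_0$ is homological on $K(\au)$. The final clause of \ref{iwcpu} is immediate from \ref{iwcfunct}: using $t'{}^{st}\circ F\cong K(\hf)\circ\tst$ and $K(\ca')\circ K(\hf)=K(\ca'\circ\hf)=K(\ca)$, one gets $H^{\ca'}\circ F=H_0\circ K(\ca')\circ K(\hf)\circ\tst=H^{\ca}$.

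The main work is the uniqueness assertion \ref{iwcpuc}, and I would carry it out in two steps. First, any homological coproduct-preserving $H$ that vanishes on $\bu[i]$ for $i\neq0$ is pure, i.e. $H(\cu_{w\ge1})=H(\cu_{w\le-1})=0$: by Proposition \ref{pexw} the class $\cu_{w\ge1}$ (resp. $\cu_{w\le-1}$) is generated from $\obj\bu[i]$ with $i\ge1$ (resp. $i\le-1$) under coproducts and extensions, $H$ annihilates the generators and their coproducts, and exactness of the long exact sequence at the middle term forces $H$ of an extension of two $H$-acyclic objects to vanish. Second, for such $H$ the weight spectral sequence of the weight filtration of any $M$ has $E_1$-entries $H(M^p[q])$ with $M^p\in\hw$, so purity annihilates every row with $q\neq0$, the sequence degenerates, and one obtains a natural isomorphism $H(M)\cong H_0(H(M^\bullet))$; since $H|_{\bu}=\ca$ and $H$ respects coproducts, $H|_{\hw}=\ca$ on $\hw=\bu^{\widehat\oplus}$, whence $H\cong H^{\ca}$. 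The step I expect to be the main obstacle is guaranteeing that this weight spectral sequence exists, is natural, and converges when $M$ has unbounded weights and $\au$ only satisfies AB4; this is exactly where the AB4 hypothesis (exactness of coproducts, which also makes $H^{\ca}$ itself coproduct-preserving) and the honestly functorial strong weight complex of \cite{sosnwc} --- as opposed to the weak one --- are indispensable.
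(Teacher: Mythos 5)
Your overall route coincides with the paper's: the paper likewise obtains the existence of $t^{st}$ and assertions (\ref{iwcbase}), (\ref{iwcfunct}) from Remark 3.6 of \cite{sosnwc}, and deduces assertions (\ref{iwcw}), (\ref{iwcons}) and the second half of (\ref{iwcpu}) from the compatibility of $t^{st}$ with the \emph{weak} weight complex functor (Proposition 1.3.4 and Lemma 1.3.2(3) of \cite{bwcomp}). Your direct arguments for these parts are correct: the orthogonality computation giving existence and uniqueness of $j_i$ (indeed $w_{\le i}N\perp w_{\ge i+2}N[-1]$ since $w_{\ge i+2}N[-1]\in \cu_{w\ge i+1}$), the octahedron placing $\co(j_i)$ in $\cu_{w=i+1}$, the induction on $m-l$ for the extension-closure claim, and the derivation $H^{\ca'}\circ F\cong H_0\circ K(\ca')\circ K(\hf)\circ t^{st}=H^{\ca}$ are exactly what the cited statements encode. (One small refinement: your proof of the ``moreover'' clause of (\ref{iwcpu}) goes through assertion (\ref{iwcfunct}) and hence uses the $\infty$-lift of $F$; as Remark \ref{rwc}(2) points out, the weak weight complex route shows that no lift is needed for this clause, though the statement as phrased does not require that.)

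The genuine gap is where you yourself flagged it, in assertion (\ref{iwcpuc}). Your step (i) --- purity of $H$, i.e. $H(\cu_{w\ge 1})=H(\cu_{w\le -1})=\ns$, obtained from the generation of these classes by $\obj \bu[i]$ under coproducts and extensions --- is correct, and is precisely the reduction the paper uses together with the observation that coproduct-preserving functors $\hw\to \au$ correspond to additive functors $\bu\to \au$ (since $\hw=\bu^{\widehat\oplus}$). But step (ii) cannot be run through the weight spectral sequence: Proposition \ref{pwss} yields convergence only when $M$ is bounded below and $H$ kills $\cu_{w\ge i}$ for $i$ large, and for a general object of $\cu$, whose weights are unbounded in both directions, neither hypothesis holds; a degenerate spectral sequence identifies the abutment only once convergence is established, and AB4 alone does not provide it (one runs into completeness and $\varprojlim^1$-type issues for the weight filtration). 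The paper instead cites Proposition 2.3.2(6) of \cite{bwcomp}, which rests on Theorem 2.1.2 of ibid.: any homological functor annihilating $\cu_{w\ge 1}\cup \cu_{w\le -1}$ is canonically isomorphic to $H^{\ca}$ for $\ca=H|_{\hw}$, and this is proved by a direct comparison using weight decompositions (virtual truncations) rather than by convergence of a spectral sequence. Substituting that statement for your step (ii) closes the argument; as written, the natural isomorphism $H(M)\cong H_0(\ca(M^{\bullet}))$ for arbitrary $M$ is asserted but not proved.
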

\begin{proof}
%All these statements expect the % two last ones %
%last one 
Assertions \ref{iwcbase} and \ref{iwcfunct} easily  follow from  %Corollary 3.5
Remark 3.6  of \cite{sosnwc}, and the first part of assertion \ref{iwcpu} is obvious. %; cf. also Remark \ref{rwc}(2) below. % (along with its proof which is essentially self-dual); %По сути тот Йонеда, который используется -- это универсальное вложение в категорию с конечными пределами и сдвигами. Это то же самое, что универсальное вложение в категорию с конечными копределами и сдвигами из-за стабильности всего происходящего. Поэтому это всегда коммутирует с {op}.(cf. Remark 1.3.5(3)  of  \cite{bwcp});
  %see also  \S6.3 of \cite{bws} for the case where $\cu$ possesses a differential graded enhancement (and that is sufficient for our purposes below). %the definition and properties of $t$ described in \S1.3 of 
  %are stated in \cite{bwcp}; see Proposition 1.3.4(6,7,8,12) %, Remark 1.3.5(5) of ibid., 
	% and Theorem 2.1.2(1) of ibid.
	
	Next, we recall that the functor $\tst$ is "compatible" with the {\it weak} weight complex functor as defined in %loc. cit.;
	 \cite{bwcomp}; see Remark \ref{rwc}(2) below for more detail. Hence one can apply Proposition 1.3.4(4,6) and Lemma 1.3.2(3) of %\cite{bwcp} 
	 ibid. to obtain assertion \ref{iwcw}. 
	
	Similarly, 	%The 
	 (the easy) assertion \ref{iwcons} is given by Proposition 1.3.4(10) of ibid., % \cite{bwcp}. %Note however that to apply any of the results of ibid. 	that we mention in this proof 	one should recall that 
		%The first part of  Similarly, 
		 and the non-trivial (second) part of  assertion \ref{iwcpu} follows from Proposition 1.3.4(12) of loc. cit.  (see also Theorem 2.1.2 of ibid.). %whereas 	 Next,
		 Moreover, assertion \ref{iwcpuc} follows from Proposition 2.3.2(6) of ibid.; %note
	 to obtain this implication one should note  that the functors $\hw\to \au$ that  respect small coproducts are essentially in one-to-one correspondence with additive functors $\bu\to \au$ (since $\hw=\bu^{\widehat\oplus}$).   %; see also 	Remark \ref{rwc}(2) once again. 
	%assertion \ref{iwcpu} follows from 
		%Proposition 2.3.2(6) and  %is trivial, whereas the second one 
	%can be deduced from Remark 3.6  of \cite{sosnwc}; see also 	Remark \ref{rwc}(2) once again. 	%Assertion \ref{iwcpuc}
		 %Lastly, assertion \ref{iwcdet} easily follows from Proposition 2.2.3(1,2) of ibid.\footnote{Actually, to apply loc. cit. directly one should assumed that the category $\bu$ is essentially small. However, one can easily apply the argument used in the proof of loc. cit. to prove the general case of our assertion; note also that the smallness of the corresponding category $\bu$ is fulfilled in all the settings that we will consider below.}
	\end{proof}

\begin{rema}\label{rwc}
1. The term "weight complex" originates from \cite{gs}; yet the domains of the  ("concrete") weight complex functors considered in that paper were not triangulated. %, whereas the target was ("the ordinary") $K^b(\chowe)$.

2. In Proposition 1.3.4 of \cite{bwcomp} a certain (canonical) weak weight complex functor was defined as a functor from a category canonically equivalent to $\cu$ into the "weak" category $\kw(\hw)$. Now, our proof above depends on two observations.

Firstly, there exists a canonical additive functor $K(\hw)\to %K_{\w}
\kw(\hw)$, and the weak weight complex functor essentially factors through it; see (Remark 1.3.5(3) of %ibid. 
 \cite{sosnwc} along with Remark 3.6 of  \cite{sosnwc}).

Secondly,  the functors of the type $H^{\ca}$ as in part \ref{iwcpu} of our proposition (that were called {\it $w$-pure} ones in \cite{bwcomp}; %see Theorem 2.1.2 of ibid. (whereas 
the terminology was justified in Remark 2.1.3(3) of ibid.) factor through the weak weight complex functor; see Theorem 2.1.2 of  \cite{bwcomp}. %gives certain compatibility if combined with the {iwcpuc}??!
Moreover, the %existence 
 properties  of pure functors essentially do not depend on any enhancements (as mentioned in our proposition). In particular, it is easily seen that no $\infty$-lifts for $F$ are necessary for the second part of Proposition \ref{pwt}(\ref{iwcpu}). 

On the other hand, one can probably re-prove some of the statements above via arguments similar to that in Remark 3.6 of  \cite{sosnwc}.
\end{rema}

Next we pass to weight spectral sequences. We assume that $\cu$ possesses an $\infty$-enhancement since we want to cite the previous proposition.

\begin{pr}\label{pwss}
Adopt the assumptions of  Proposition \ref{pwt}; assume that $H$ is a homological functor $\cu\to \au$. 

 Then for any $M\in \obj \cu$  there exists a spectral sequence $T=T_w(H,M)$ with  $E_2^{pq}(T)=H^{G_{-q}}_{-p}(M)$, where %we write  $H_{-q}$ for $H\circ [q]$ %$E_1^{pq}(T)=H_{-q}(M^{p})$ %(see the convention for homolog (recall that $H_{-q}=H\circ [q]$), where  
 $G_{-q}$ is the restriction of the functor  $H_{-q}=H\circ [q]$ to $\hw$ (and respectively, $H^{G_{-q}}_{-p}=H^{G_{-q}}\circ [p]$; see also Proposition \ref{pwt}(\ref{iwcpu}). %$M^i$ and the boundary morphisms of $E_1(T)$ come from any choice of $t(M)$.
 
Moreover, $T_w(H,M)$ is $\cu$-functorial  in $M$ and in $H$ (with respect to composition of $H$ with exact functors of abelian categories), %;   starting from $E_2$; hence  $E_2^{pq}(T)=H^{G_{-q}}_{-p}(M)$, where $G_{-q}$ is the restriction of  $H_{-q}$ to $\hw$. Moreover, 
 and $T_w(H,M)$ %It
  converges to $H_{-p-q}(M)$ whenever $M$ is bounded below and $H$ kills $\cu_{w\ge i}$ %and $\cu_{w\le -i}$
  for $i$ large enough.
%The step of the filtration given by ($E_{\infty}^{l,m-l}:$ $l\ge n$)  on $H_{m}(M)$ (for some $n,m\in \z$) equals  $(W_{-n}H_{m})(M)$.
\end{pr}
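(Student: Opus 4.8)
The plan is to build $T_w(H,M)$ as the spectral sequence of an exact couple coming from the weight Postnikov tower of $M$, following the general formalism of weight structures. First I would fix, for each $i\in\z$, a weight decomposition $w_{\le i}M\to M\to w_{\ge i+1}M$ and invoke Proposition \ref{pwt}(\ref{iwcw}) to obtain the connecting morphisms $j_i\colon w_{\le i}M\to w_{\le i+1}M$ together with the distinguished triangles
$$w_{\le i}M\xrightarrow{j_i} w_{\le i+1}M\to \tilde N^{-1-i}[1+i]\to w_{\le i}M[1],$$
whose third vertices are shifts of the heart objects $\tilde N^{j}\in\cu_{w=0}$ that are the terms of the weight complex $\tst(M)$ (writing $j=-1-i$ makes the third vertex $\tilde N^{j}[-j]$). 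Applying the homological functor $H$ and all of its shifts $H_{*}=H\circ[*]$ to this tower yields, for every $i$, a long exact sequence, and these assemble into an exact couple whose $D$-term is built from the groups $H_{*}(w_{\le i}M)$ and whose $E$-term is built from the groups $H_{*}(\tilde N^{j}[-j])$. I would then define $T=T_w(H,M)$ to be the spectral sequence of this exact couple, with the grading chosen so that $E_1^{pq}$ records $H_{-q}$ evaluated on the appropriate shifted heart term.

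The next step is to identify the $E_2$-page. By construction the differential $d_1$ of $T$ is induced by the composites $\tilde N^{j}[-j]\to w_{\le i}M[1]\to\tilde N^{j+1}[-j]$ coming from adjacent triangles, and the key point is that, after applying $H_{-q}$, these composites recover precisely the boundary maps of the weight complex $\tst(M)$ with $G_{-q}=H_{-q}|_{\hw}$ applied termwise. Hence the $E_1$-complex $(E_1^{\bullet q},d_1)$ is identified with $G_{-q}(\tst(M))$, and taking its homology gives $E_2^{pq}=H^{G_{-q}}_{-p}(M)$ by the very definition of the pure functor $H^{G_{-q}}$ in Proposition \ref{pwt}(\ref{iwcpu}); here one checks that the internal shift by $[p]$ matches the homological degree inside the complex. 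The compatibility of the Postnikov-tower boundaries with the weight complex differential is exactly what the construction of $\tilde t(M)$ in Proposition \ref{pwt}(\ref{iwcw}) provides.

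For functoriality in $H$ (with respect to composition with exact functors of abelian categories) the claim is immediate, since such a functor commutes with the formation of long exact sequences and of homology, hence with the whole exact couple. Functoriality in $M$ is more delicate because weight decompositions are not canonical; the standard remedy is that a morphism $M\to M'$ lifts to a morphism of weight Postnikov towers, unique up to the kind of homotopy that does not affect the induced exact couple from the $E_1$-term onward, so that $T$ is functorial from $E_2$ on. For convergence I would use that when $M$ is bounded below one may take $w_{\le l}M=0$ for $l\ll 0$ (so the filtration of each $H_{-p-q}(M)$ is exhaustive and bounded below), while the hypothesis that $H$ kills $\cu_{w\ge i}$ for $i$ large, combined with Proposition \ref{pwt}(\ref{iwcons}), forces the filtration to stabilize from above; the standard convergence criterion for exact couples then yields convergence of $T$ to $H_{-p-q}(M)$.

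The step I expect to be the main obstacle is the functoriality in $M$ together with the independence of the construction from the non-canonical choices of weight decompositions: showing that two different choices of tower produce canonically isomorphic spectral sequences (at least from $E_2$ on) requires the uniqueness assertions of Proposition \ref{pwt}(\ref{iwcw}) and a careful comparison of the two exact couples. This is precisely the bookkeeping that the strong weight complex formalism is designed to streamline, so I would lean on it rather than reprove it by hand.
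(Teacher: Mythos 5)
Your proposal is correct and takes essentially the same route as the paper: the paper's proof simply cites Theorem 2.3.2 of \cite{bws}, whose content is exactly the exact-couple construction from a weight Postnikov tower that you carry out (including functoriality from $E_2$ onward and the convergence criterion), combined with Proposition \ref{pwt}(\ref{iwcpu}) to identify the $E_2$-page with the pure homology $H^{G_{-q}}_{-p}(M)$. The only additional ingredient in the paper is Remark \ref{rwc}(2), the compatibility of the strong weight complex with the weak one used in the cited reference, which you bypass by invoking Proposition \ref{pwt}(\ref{iwcw}) directly.
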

\begin{proof}
%These statements  are  essentially  contained in Theorems 2.3.2 and  2.4.2  of \cite{bws}, respectively. 
This is (essentially) an easy combination of  Theorems 2.3.2   of \cite{bws} with our Proposition \ref{pwt}(\ref{iwcpu});  see also Remark \ref{rwc}(2). 
\end{proof}

%Below we will need the following simple statement that can be proved using weight spectral sequences.

\subsection{Chow weight structures on our categories}\label{swchow} % $R$-linear motivic categories}

%Now using the aforementioned statements we %results of  \cite{bwcomp} we 
 Using the results above we construct and study the Chow weight structures on $\dmerb$ and  on $\dmrr$.

\begin{pr}\label{pwchow}
Assume $r\ge -1$ and $M\in \obj \dmer$.
\begin{enumerate}
\item\label{ipinf} Then the categories $\dmer$ and $\dmr^r$ possess $\infty$-enhancements.

\item\label{ip1} There exists a left non-degenerate weight structure %smashing 
 $\wchow$ on $\dmerb$ that is  purely compactly generated by $\chower$ in the sense of Proposition \ref{pexw}; thus $\dmer_{\wchow\le 0}$ (resp.  $\dmer_{\wchow\ge 0}$) is the smallest subclass of $\obj \cu$ that is closed with respect to coproducts, extensions, and contains $\obj \chower[i]$ for $i\le 0$ (resp. for $i\ge 0$).

Respectively, %whose heart equals the coproductive hull of
 $\hwchow=\chower{}^{\widehat\oplus}$. %(see %\ref{snotata} Definition \ref{desmash}(\ref{idhull})  for this definition). 

%????Moreover, $DM^{eff}_{R, w\ge 0}=(\cup_{i< 0}\obj \chower[i])^{\perp}$ in $\dmerb$.

\item\label{iprest} % $\wchow$ restricts (see Definition \ref{dwso}(\ref{idrest}) to the subcategory $\dmer\lan r\ra$ of $\dmer$; the heart of this restriction equals $\chower{}^{\widehat\oplus}\lan r\ra$.
The functor $-\lan r+1\ra:\dmer\to \dmer$ is weight-exact with respect to $\wchow$.

Moreover, this functor is "strictly weight-exact", i.e., if  %$M\in \obj \dmer$ and
  $M \lan r+1\ra $ belongs to $\dmer_{\wchow\le 0}$ (resp. to $\dmer_{\wchow\ge 0}$) then $M\in \dmer_{\wchow\le 0}$ (resp. $M\in \dmer_{\wchow\ge 0}$) as well. 

\item\label{iptrun} Assume that for some $i\in \z$  there exist  choices of $\wchow{}_{\leq -i}M$ and $\wchow{}_{\leq -i-1}M$ that belong to $\obj \dmerb \lan r+1 \ra$. Then the corresponding object $\tilde{M}^{i}=\co(j_{-i-1})$ as mentioned in Proposition \ref{pwt}(\ref{iwcw}) belongs to $\dmer{}_{\wchow=0}\lan r+1 \ra$.

\item\label{iploc} %If $r\ge 0$ then the %
The   localization of $\dmerb$ by its subcategory $\dmerb\lan r+1\ra$ satisfies the conditions of Proposition \ref{winloc} with $\hu=\chower\lan r+1\ra$. Consequently, there exists a purely compactly generated weight structure $\wchow^{r}$ on $\dmr^r$  such that the localization functor $l^r:\dmer\to \dmr^r$ (see Definition \ref{dchowm}(4)) is weight-exact; moreover, $l^r$ respects coproducts and the compactness of objects.

Moreover, if  $-1\le s\le r$ then the obvious localization functor $l^s_r:\dmr^r\to \dmr^s$ is weight-exact and respects the compactness and coproducts as well.

\item\label{ipext} If $K/k$ is a field extension then the base field change functor $-_K:\dmer\to \dmer(K\perf)$ (see Definition \ref{dchowm}(2) and Proposition \ref{pmot} below) is  weight-exact and respects coproducts. 

\item\label{ip2} If $R$ is not torsion and $k$ is of infinite transcendence degree over its prime subfield, then the %corresponding
  weight structure $\wchow$ is right degenerate.
%\item\label{ip3} $\wchow$ restricts to a bounded weight structure on the subcategory of $\dmger$ and the heart of this restriction is equal to $\chower$.
%\item\label{ip4} Consider the subcategory $DM^{eff,\wchow}_{R}\subset \dmerb$, consisting of motives whose weight complexes are complexes of Chow motives (i.e., not of their coproducts).This is a triangulated subcategory, and $\wchow$ on $\dmerb$ restrict 
%to a weight structure on $DM^{eff,\wchow}_{R}.$ Moreover, the heart of this restriction is $\chower$.
\end{enumerate}
\end{pr}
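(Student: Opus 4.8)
The plan is to prove right degeneracy by producing a single \emph{nonzero} object lying in $\bigcap_{i\in\z}\dmer_{\wchow\le i}$; by Definition~\ref{dwso}(\ref{degen}) the existence of such an object is precisely the failure of right non-degeneracy, i.e.\ it says that $\wchow$ is right degenerate. First I would use the hypothesis that $k$ has infinite transcendence degree over its prime subfield to choose an infinite sequence $t_1,t_2,\dots\in k$ of algebraically independent elements. Each $t_i$ determines a class in $\dmer(R,R\lan1\ra[-1])\cong k^{\times}\otimes_{\z}R$ (motivic cohomology in bidegree $(1,1)$, since $R\lan1\ra[-1]=R(1)[1]$), hence a morphism $R\to R\lan1\ra[-1]$; tensoring it with $R\lan n\ra[-n]$ yields a transition map $R\lan n\ra[-n]\to R\lan{n+1}\ra[-(n+1)]$. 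I would then set $M=\operatorname{hocolim}_n R\lan n\ra[-n]$ for this telescope (the homotopy colimit exists since $\dmerb$ is smashing); by construction the composite $R\to R\lan n\ra[-n]$ is the Milnor symbol $\{t_1,\dots,t_n\}$.

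Next I would verify that $M$ is right $\wchow$-degenerate. By Proposition~\ref{pwchow}(\ref{ip1}) we have $R\lan n\ra\in\hwchow$, so $R\lan n\ra[-n]\in\dmer_{\wchow=-n}$ (Definition~\ref{dwso}(\ref{id=i})); thus these objects have weights tending to $-\infty$. Fix $i\in\z$. By Proposition~\ref{pbw}(\ref{iort}) it suffices to prove $M\perp\dmer_{\wchow\ge i+1}$. For any $Z\in\dmer_{\wchow\ge i+1}$ and any fixed shift $j$, the group $\dmer(R\lan n\ra[-n+j],Z)$ vanishes once $n$ is large enough: the source then lies in $\dmer_{\wchow\le -n+j}$ with $-n+j<i+1$, so orthogonality (Definition~\ref{dwstr}(iii)) applies. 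Feeding this into the Milnor exact sequence that computes $\dmer(M,Z)$ from the inverse system $\{\dmer(R\lan n\ra[-n],Z)\}$ and its shift by $1$ — both the $\prli$ and the $\prli^{1}$ term vanish, since each system is eventually zero — gives $\dmer(M,Z)=\ns$. As $i$ was arbitrary, $M\in\bigcap_{i\in\z}\dmer_{\wchow\le i}$.

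It remains to show $M\neq0$, and this is where both hypotheses are used. Since $R=\mgr(\spe k)$ is a geometric motive, hence compact, the functor $\dmer(R,-)$ commutes with the homotopy colimit, so
\[
\dmer(R,M)\cong\inli_n \dmer(R,R\lan n\ra[-n])\cong\inli_n\bigl(K^{M}_{n}(k)\otimes_{\z}R\bigr),
\]
the transition maps being multiplication by the symbols $\{t_{n+1}\}$. The compatible family $\{t_1,\dots,t_n\}$ defines an element of this colimit, which is nonzero provided no symbol $\{t_1,\dots,t_n\}$ dies in $K^{M}_{n}(k)\otimes R$. The main obstacle is exactly this non-vanishing. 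I would argue that symbols of algebraically independent elements are non-torsion in Milnor $K$-theory — e.g.\ by iterated residue/specialization homomorphisms that strip off one variable at a time and reduce to $K^{M}_{0}=\z$ — so they stay nonzero in $K^{M}_{n}(k)\otimes\q$; then, because $R$ is not torsion, $R\otimes\q\neq0$, and the image of $\{t_1,\dots,t_n\}$ in $(K^{M}_{n}(k)\otimes\q)\otimes_{\q}(R\otimes\q)$ is a nonzero pure tensor. Hence the colimit element is nonzero, $\dmer(R,M)\neq\ns$, and so $M\neq0$. Everything apart from this non-vanishing input (together with the survival of the symbols after $\otimes R$) is formal weight-orthogonality and the standard behaviour of homotopy colimits.
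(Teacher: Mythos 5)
Your proposal addresses only assertion (\ref{ip2}) of the Proposition, and this is the genuine gap: the statement consists of seven assertions, and parts (\ref{ipinf})--(\ref{ipext}) are not touched at all. In the paper these are where the actual work specific to motives happens: assertion (\ref{ip1}) follows from Proposition \ref{pexw} once one knows that $\chower$ is connective in $\dmer$ (Corollary 6.7.3 of \cite{bev}) and compactly generates it; the strict weight-exactness in (\ref{iprest}) needs the Cancellation Theorem (full faithfulness of $-\lan r+1\ra$) combined with Proposition \ref{pbw}(\ref{iort}); assertion (\ref{iptrun}) combines (\ref{iprest}) with Proposition \ref{pwt}(\ref{iwcw}); assertion (\ref{iploc}) is an application of Proposition \ref{winloc} (using $\chower\lan r\ra\subset\chower$, and choosing $\bu,\hu$ appropriately for the functors $l^s_r$); and assertion (\ref{ipext}) requires knowing that $-_K$ is the restriction of $f^*$ (Proposition \ref{pmot}(\ref{ie1})), that $f^*$ respects coproducts because it admits a right adjoint $f_*$, and that it sends Chow motives to Chow motives. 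None of this is recoverable from your telescope construction, so as a proof of the Proposition as stated the proposal is incomplete.

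For assertion (\ref{ip2}) itself your argument is essentially correct, and it is worth noting how it compares with the paper: the paper proves this part purely by citation to Proposition 3.2.6 of \cite{bwcomp}, which in turn rests on Ayoub's example (Lemma 2.4 of \cite{ayconj}) --- and your telescope $M=\operatorname{hocolim}_n R\lan n\ra[-n]$ with transition maps given by classes $t_i\in k^{\times}\otimes R\cong\dmer(R,R\lan 1\ra[-1])$ is precisely that construction, so you have unfolded the cited proof rather than found a new one. The weight-orthogonality step is sound: $R\lan n\ra[-n]\in\dmer_{\wchow=-n}$, the Milnor sequence kills both $\prli$ and $\prli^1$ terms of an eventually-zero system, and Proposition \ref{pbw}(\ref{iort}) then places $M$ in every $\dmer_{\wchow\le i}$. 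One small point deserves care in the non-vanishing step: iterated residue maps show that $\{t_1,\dots,t_n\}$ is non-torsion in $K^{M}_{n}$ of the purely transcendental subfield $k_0(t_1,\dots,t_n)$ (or of a subfield generated by a full transcendence basis), but Milnor $K$-theory is not injective under arbitrary field extensions, so to conclude non-torsionness in $K^{M}_{n}(k)$ you must extend $t_1,\dots,t_n$ to a transcendence basis and use that the kernel of $K^{M}_{n}(k_1)\to K^{M}_{n}(k)$ for the algebraic extension $k/k_1$ is torsion (restriction--corestriction for finite subextensions). Since your argument only uses the image being non-torsion (so that it survives in $K^{M}_{n}(k)\otimes\q$ and then, $R$ being non-torsion, as a nonzero pure tensor after $\otimes R$), this repairs cleanly; with that caveat, the element of $\inli_n K^{M}_{n}(k)\otimes R$ is nonzero at every stage, hence nonzero, and $M\neq 0$ as claimed.
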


\begin{proof} %All of these statements were proved in
Assertion \ref{ipinf} is obvious. %ask Sosnilo for a reference???

%Recall 
Now,  the subcategory $\chower$ compactly generates $\dmer$. Consequently, to obtain assertion \ref{ip1} it suffices to recall that $\chower$ is connective in $\dmer$ (see Corollary 6.7.3 of \cite{bev}) and apply Proposition \ref{pexw}. 

Next, %to prove assertion \ref{iprest} 
 we recall that  the functor $-\lan r+1\ra$ respects coproducts and sends $\chower$ into itself.
 %$\chower\lan r\ra\subset \chower$
  Applying the explicit description of $\wchow$ we obtain that $-\lan r+1\ra$ is weight-exact. Moreover, if $M \lan r+1\ra\in \dmer_{\wchow\le 0}$ (resp.  $M \lan r+1\ra \in \dmer_{\wchow\ge 0}$) then applying this weight-exactness we obtain that $M \lan r+1\ra\perp \dmer_{\wchow\ge 1}\lan r+1\ra$ (resp. $ \dmer_{\wchow\le -1}\lan r+1\ra \perp M$). Combining Proposition \ref{pbw}(\ref{iort}) with the Cancellation Theorem (which %that %i.e., with the fact 
	says that $-\lan r+1\ra$ is fully faithful) we conclude the proof.

\ref{iptrun}.	Proposition \ref{pwt}(\ref{iwcw}) says that $\tilde{M}^{i}$ belongs to  $\dmer_{\wchow= 0}$. Since $\tilde{M}^{i}$ also belongs to $\obj \dmer \lan r+1\ra$, the previous assertion implies that $\tilde{M}^{i}=N\lan r+1\ra$, where $N$ belongs to   $\dmer_{\wchow\le 0}\cap  \dmer_{\wchow\ge 0}= \dmer_{\wchow= 0}$.

%Consequently, if $M \lan r+1\ra \in \dmer_{\wchow= 0}$ then $M\in \dmer_{\wchow= 0}$.
	% obtain the "moreover" part  of  assertion \ref{iprest}. Its last part follows from the previous statement immediately.

%Next, a
%Applying this observation 
 Since $\chower\lan r\ra\subset \chower$, we also obtain that the first part of assertion \ref{iploc} follows from Proposition \ref{winloc}. 
% since $\chower\lan r\ra\subset \chower$.
Next, to obtain the "moreover" part of the assertion one can apply Proposition \ref{winloc} for $\bu$ and $\hu$ consisting of  $\mgr^r(\spv)$ and $l^r(\mgr(\spv)\lan s+1\ra)$, respectively.
%images in $\dmrr$ (via $l^r$) of $\ob
 
\ref{ipext}. We recall that if $f: \spe K\perf\to \spe k$  is the corresponding morphism then $-_K$ %is essentially 
 can be defined as the restriction to $\dmer$ of the functor $f^*:\dmr\to \dmrbl$; see Proposition \ref{pmot}(\ref{ie1}) below. Now, the latter functor  respects coproducts % essentially by definition; see  Remark 1.2 and %, Definition 1.5, and Proposition 8.1(c) 
%Proposition 4.3 and Theorem 5.1
%Theorem 3.1 of \cite{cdint}.   %for the corresponding morphism $f:\spe K\perf\to \spe k$ 
 since there exists a functor $f_*$ that is right adjoint to it; % $f^*=-_K$; 
see %Remark 1.2 and  
 Theorem 3.1 of \cite{cdint}, and Definitions  1.1.12 and 1.4.2 of \cite{cd}.  Hence $-_K$ respects coproducts as well.

Next, $f^*$ sends (effective) Chow motives over $k$ into that over $K\perf$ %immediately by
 (see Proposition \ref{pmot}(\ref{ie1}) below), applying the explicit description of $\wchow$ once again we obtain that  $-_K$ 
 %$f^*$
  is weight-exact indeed. 

%See Proposition 3.2.6 and 
Lastly,  assertion \ref{ip2} is given by Proposition 3.2.6  of \cite{bwcomp}. 
\end{proof}

%A corollary????! 

%\subsection{On %certain properties of  higher Chow groups and motivic homology}\label{phcgmh}

\section{On Chow-weight homology and its relation to motivic homology}\label{scwh}

In \ref{sscwh} we define and study Chow-weight homology functors $\dmer\to \ab$; these statements generalize the ones of \cite[\S3.1]{bsoscwhn}.
%extend the properties of Chow-weight homology estab

In \ref{scwhp} we extend the equivalent criteria for the vanishing of Chow-weight homology from $\dmger$ (as studied in ibid.) to $\dmerpl$.

In \ref{smoth} we prove our central Theorem \ref{main} that relates the vanishing of motivic homology in a staircase range to certain Chow-weight homology vanishing conditions. This is related to the study of {\it slices}.

\subsection{On Chow-weight homology of general motives: basic properties}\label{sscwh}

%see Definition \ref{deffdim}(6) for the notation that we use here.????!!!!

\begin{defi}\label{dcwh}
%1.  We will write $\wcr(M)$ for a choice of a weight complex for a   motive $M\in \obj \dmerb$  with respect to the Chow weight structure for $\dmerb$ (it is a $\chower{}^{\widehat\oplus}$-complex,  and one can assume that $\wcr$ is an exact functor $\dmerb\to K(\chower$$^{\widehat\oplus})$. 
%2. 

1. Let $i, l,j\in \z$; % and %$j\ge 0$; 
 let $K$ be a field extension of $k$.

Then we will write  $\chw_j^{i}(-_K,R,l)$ for the functor $H^{\ca}\circ [i]$, %  (see \S\ref{snotata} for a discussion on this convention), 
 where  we apply Proposition \ref{pwt}(\ref{iwcpu}) to the weight structure $\wchow$ (see Proposition \ref{pwchow}(\ref{ip1}) and $\ca$ is the restriction of the functor $N\mapsto \chowm_j(N_K,R,l)$ (see Definition \ref{dchowm}(3)) to $\hwchow$. % and })). 
%For an object $M$ of $ \dmerb$ and $(M^s)$ being a choice of $\wcr(M)$, we define the abelian group $\chw_j^{i}(M_K,R,l)$ as the $i$-th homology of the complex $\chowm_j(M^s_K,R,l)$;
 We will often omit $R$ in this notation when its choice is clear. %????, and omit $K$ if it equals $k$.

 Moreover, %at some occurencies 
 we will often write $\chw_j^{i}(M_K,R)$ for  $\chw_j^{i}(M_K,R,0)$. 

2. We will use the notation $\dmrwp$ for the class of   $\wchow-$bounded below motives (see Definition \ref{dwso}(\ref{lrbo})). 

3. We will %also need the following notation: 
 need the following %extension of Definition \ref {dchowm}(1)
 convention (cf. \S\ref{smot}): $\chower\lan {+\infty}\ra=\dmer\lan {+\infty}\ra=\ns$, $l^{+\infty}=l^{+\infty}_{+\infty}$ is the identity on $\dmer$, $l^j_{+\infty}=l^j$,  $\wchow^{+\infty}=\wchow$, and $+\infty+1=+\infty$.

4. We will write $\thomr$ for the $R$-linear version of the homotopy $t$-structure of Voevodsky; see  %\S3.1.2 
 \S4.4 of \cite{bev} or Example 2.3.13 of \cite{bondegl}.  %or  Corollary 5.2 of  \cite{degmod}
 % for its %$R$-linear  %$\dmer$-version. 
\end{defi}

Let us prove some properties of these functors.

\begin{pr}\label{pr1}

Let $i,l,j, K$ be as above %, $j\ge 0$,
  and $r\in [0,+\infty]$.

1. $\chw^{i}_{j}(-_K,R,l)$ %yields
 is  a  homological functor on $\dmerb$ that  respects coproducts. % (and does not depend on any choices). 
 Moreover, this functor factors through the base field change functor  $\dmerb \to \dmerb(K^{perf})$.

2. Assume   $r\ge j+l$. Then  the functor $\chw_j^{0}(-_K,R,l)$ kills $\dmerb\lan r+1\ra$; thus it induces a well-defined functor %$\dmerb/\dmerb\lan r+1\ra
$\dmr^r\to \ab$ (see Definition \ref{dchowm}(4)). 

Moreover, this functor is pure with respect to the  weight structure $\wchow^r$ (see Proposition \ref{pwchow}(\ref{iploc})).%corresponding???? 

3. %compute the composition functor???
%Suppose $N\in\dmer/\dmer\lan j+1\ra _{\wchow\ge 0}$.Then 
For any smooth projective connected variety $P/k$ the functors %$\dmer/\dmer\lan j+1\ra 
$\dm^{j}_{R} (l^j(\mgr(P)\lan j\ra),-)$ and $ \chw_{j}^0(-_{k(P)},R)$ are canonically isomorphic;  note that the latter functor is well-defined according to the previous assertion.

 %4. Assume $N\in \dmer/\dmer\lan r+1\ra_{\wchow\ge -n}$ for some $n\in \z$. Then $\chw_{j}^{i}(N_K,l)=\ns$ for all $i>n$, $j\le r-l$ (note that these Chow-weight homology groups of $N$ are well-defined). 

4. Let $N$ belong to %$DM^{eff}_{R,\wchow\ge -n}$ (resp. to 
 $\dmrr_{\wchow^r\ge -n}$.
Then $\chw_{j}^{i}(N_K,l)=\ns$ whenever either $i>n$ %(resp. for $i>n$
and $j\le r-l$ or if $l<0$.

5. %Moreover, if  
Moreover, %If  %$0\le m\le r$
 if $m\in [0,r]$ then the following %additional 
 assumptions on $N\in \dmrr_{\wchow^r\ge -n}$ %(as in the previous assertion) 
 are equivalent.
%and   we assume

(a). $\chw^{i}_{j}(N_K)=\ns$ for all $0\le j\le m$ and all function fields $K/k$;

(b). The %corresponding image $N_m$ of $N$ in $\dmr^m$
 object $N_m=l^m_r(N)$   belongs to $\dmr^m{}_{\wchow^m\ge 1-n}$.

(c).  There exists a choice of  $\wchow^r{}_{ \le -n}N$ that belongs to %$\obj \dmr\lan m+1\ra$ (resp. to 
$l^r(\obj \dmr\lan m+1\ra)$.  
%For any  choice of a $-n$-weight decomposition  $\wchow_{ \le -n}N\stackrel{g}{\to} N\to \wchow_{\ge 1-n}N$ (resp. $\wchow^r_{ \le -n}N\stackrel{g}{\to} N\to \wchow^r_{\ge 1-n}N$)  of $N$  the morphism $g[n]$ can be factored through a coproduct of objects of $\chower\lan m+1\ra$ (resp. through an image of a coproduct of this form in  $\dmr^r$). %$\dmerp/\dmerp\lan r+1\ra$).

6. %Let $\thomr$ be the $R$-linear version of the homotopy $t$-structure of Voevodsky; see  \S4.4 of \cite{bev} %or  Corollary 5.2 of  \cite{degmod}
  %for its %$R$-linear 
 %$\dmer$-version. 
The class  $DM^{eff,\thomr\le 0}_{R}$ (see Definition \ref{dcwh}(4))  equals the smallest extension-closed subclass of $\obj\dmerb$ that is closed with respect to coproducts and contains $\obj\chower(a)[a+b]$ for all $a,b \ge 0$. Moreover, % and 
 the functor $\chw_{j}^{i}(N_K,l)=\ns$ kills this class whenever $i>j+l$.
%If $N\in  DM^{eff,\thomr\le 0}_{R}$  and $i>j+l$ then  $\chw_{j}^{i}(N_K,l)=\ns$. %; here 
%7.  For any $N\in DM^{eff}_{R,\wchow\ge 0}$ and any field extension $K/k$ we have  $\chw_{j}^{0}(N_K,R)\cong \chowm_{j}(N_K,R)$. 
\end{pr}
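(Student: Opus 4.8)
The plan is to prove the six assertions in order, using throughout the weight-complex description of pure functors (Proposition~\ref{pwt}(\ref{iwcpu})--(\ref{iwcpuc})) and the structural facts of Proposition~\ref{pwchow}. For assertion~1 I would unwind the definition: $\chw_j^i(-_K,R,l)=H^{\ca}\circ[i]$, where $\ca$ is the restriction to $\hwchow$ of the covariant additive functor $\chowm_j(-_K,R,l)=\dmrbl(R\lan j\ra[l],-_K)$ into $\ab$. Then $H^{\ca}$ is homological by Proposition~\ref{pwt}(\ref{iwcpu}), and, as $\ab$ is AB4, it respects coproducts by Proposition~\ref{pwt}(\ref{iwcpuc}); precomposing with $[i]$ preserves both properties. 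Since $\ca$ visibly factors through the heart-level base change and $-_K$ is weight-exact (Proposition~\ref{pwchow}(\ref{ipext})), the functoriality clause of Proposition~\ref{pwt}(\ref{iwcpu}) gives the factorization through $\dmerb\to\dmerb(K\perf)$. For assertion~2, when $r\ge j+l$ I would note that $\ca$ kills $\chower\lan r+1\ra$ by Proposition~\ref{pvan} (as $j-(r+1)+l<0$) and respects coproducts (the source $R\lan j\ra[l]$ being compact), hence kills $\hu^{\widehat\oplus}$ for $\hu=\chower\lan r+1\ra$ and factors through $\hf:\hwchow\to\hw_{\wchow^r}=\hwchow/\hu^{\widehat\oplus}$ (Proposition~\ref{winloc}(2)). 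Applying the functoriality of Proposition~\ref{pwt}(\ref{iwcpu}) to the weight-exact $l^r$ (Proposition~\ref{pwchow}(\ref{iploc})) then yields $H^{\ca}=H^{\bar\ca}\circ l^r$ with $H^{\bar\ca}$ pure for $\wchow^r$; this both exhibits the killing of $\dmerb\lan r+1\ra=\ker l^r$ and identifies the induced functor as the pure one.

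For assertion~3 I would identify $\chw_j^0(-_{k(P)},R)$ on $\dmr^j$ with $\dmr^j(l^j(\mgr(P)\lan j\ra),-)$ via the uniqueness characterization of Proposition~\ref{pwt}(\ref{iwcpuc}). The corepresentable functor is homological and respects coproducts because $\mgr(P)\lan j\ra$ is compact and $l^j$ preserves compactness (Proposition~\ref{pwchow}(\ref{iploc})); moreover $l^j(\mgr(P)\lan j\ra)$ lies in $\hw_{\wchow^j}$, so by orthogonality (Proposition~\ref{pbw}(\ref{iort})) it kills $\hw_{\wchow^j}[i]$ for $i>0$. It remains to match the two functors on the heart and to kill $\hw_{\wchow^j}[i]$ for $i<0$. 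For the heart match I would use the continuity identification $\chowm_j(N_{k(P)},R,0)\cong\varinjlim_{U}\dmerb(\mgr(U)\lan j\ra,N)$ over dense open $U\subseteq P$ (via the $p_{\#}\dashv p^{*}$ adjunction), together with the fact that in $\dmr^j$ each map $l^j(\mgr(U)\lan j\ra)\to l^j(\mgr(P)\lan j\ra)$ is an isomorphism, the complements having codimension $\ge1$ and hence $\lan j+1\ra$-effective (thus negligible) contributions. The off-heart vanishing for $i<0$ is the delicate point, and I expect to need exactly this $\lan j+1\ra$-effectivity in $\dmr^j$ rather than formal orthogonality; this parallels the corresponding statement of \cite{bsoscwhn}.

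For assertion~4 the case $l<0$ is immediate: for any effective Chow motive $N'$ one has $\chowm_j(N'_K,R,l)=\dmrbl(R\lan j\ra,N'_K[-l])=0$ by the orthogonality $\dmer_{\wchow\le0}\perp\dmer_{\wchow\ge1}$ (Proposition~\ref{pbw}(\ref{iort})) since $-l\ge1$, so $\ca\equiv0$ and hence $\chw_j^i(-,l)\equiv0$. For $i>n$ with $j\le r-l$, the functor is the pure one for $\wchow^r$ (assertion~2), and since $N\in\dmrr_{\wchow^r\ge-n}$ its weight complex is concentrated in weights $\ge-n$ (Proposition~\ref{pwt}(\ref{iwcons})); reading off the degree-$i$ homology of the termwise $\ca$-complex then gives $0$ for $i>n$. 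For assertion~5 I would establish (c)$\Leftrightarrow$(b) formally and (a)$\Leftrightarrow$(c). By Proposition~\ref{pbw}(\ref{iwd0}) the bottom truncation $\wchow^r{}_{\le-n}N$ lies in $\dmr^r_{\wchow^r=-n}$; since $l^m_r$ is weight-exact with kernel generated by $\lan m+1\ra$ (Proposition~\ref{pwchow}(\ref{iploc})), this bottom piece dies under $l^m_r$—raising the lower weight bound to $1-n$—exactly when it is $\lan m+1\ra$-effective, giving (c)$\Leftrightarrow$(b). The implication (a)$\Rightarrow$(c) is the crux and I expect it to be the main obstacle of the whole proposition: via the representability of assertion~3 one translates the vanishing of $\chw_j^i(N_K)$ for $0\le j\le m$ over all function fields into the $\lan m+1\ra$-effectivity of $\wchow^r{}_{\le-n}N$, proceeding by induction on $n$ that peels off weight pieces, as in \cite{bsoscwhn}; the converse (c)$\Rightarrow$(a) follows by feeding the effectivity of the bottom piece into Proposition~\ref{pvan}.

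For assertion~6 I would prove the generator description by two inclusions. Each $\chower(a)[a+b]$ with $a,b\ge0$ is $\thomr$-connective, and the connective aisle $DM^{eff,\thomr\le 0}_{R}$ is closed under coproducts, extensions and the relevant shifts, giving one inclusion. For the reverse I would resolve an arbitrary object of $DM^{eff,\thomr\le 0}_{R}$ by Chow motives with Tate twists and shifts of the stated shape, using that $\dmerb$ is compactly generated by $\mgr(\spv)$ and the standard description of the homotopy $t$-structure from \cite{bev}, \cite{bondegl}. For the final vanishing I would, by assertion~1, reduce to the generators, since $\{X:\chw_j^i(X_K,l)=0\ \forall\,i>j+l\}$ is coproduct-closed (assertion~1) and extension-closed (long exact sequences). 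On a generator $N'(a)[a+b]=N'\lan a\ra[b-a]$ with $N'\in\chower$, the object $N'\lan a\ra$ lies in $\hwchow$, so its weight complex has a single term and $\chw_j^i(N'\lan a\ra[b-a]_K,l)$ is nonzero only for $i=a-b$, where it equals $\chowm_j(N'_K\lan a\ra,R,l)$. For $i>j+l$ the sole candidate index $i=a-b$ forces $a=i+b>j+l$, whence Proposition~\ref{pvan} gives $\chowm_j(N'_K\lan a\ra,R,l)=0$, completing the proof.
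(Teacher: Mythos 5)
Your treatment of assertions 1, 2, 4, and 6 follows essentially the same route as the paper: pure homological functors and their uniqueness (Proposition \ref{pwt}(\ref{iwcpu},\ref{iwcpuc})), weight-exactness of $-\lan r+1\ra$ and of the localization functors (Proposition \ref{pwchow}(\ref{iprest},\ref{iploc})), and Proposition \ref{pvan} evaluated on generators; your verification on the generators $\chower(a)[a+b]$ in assertion 6 is correct. In assertion 3 you re-derive, via the Gysin/continuity argument for dense opens $U\subset P$, the birational-localization computation that the paper simply cites (Proposition 2.2.5(6) of \cite{bsoscwhn} via the embedding of Proposition \ref{winloc}(1)), and you honestly flag the $i<0$ vanishing in $\dmr^j$ as the delicate point; this is an acceptable repackaging of the same content.

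The genuine gap is in assertion 5. You declare (c)$\Leftrightarrow$(b) ``formal''. The direction (c)$\Rightarrow$(b) is indeed formal (weight-exactness of $l^m_r$ plus the fact that $l^m_r\circ l^r$ kills $\dmer\lan m+1\ra$), and (b) does force $l^m_r(\wchow^r{}_{\le -n}N)=0$ for every choice of truncation, by the bound of Proposition \ref{pbw}(\ref{iwd0}). But from ``$\wchow^r{}_{\le -n}N$ dies in $\dmr^m$'' you cannot formally conclude that \emph{some} choice of this truncation lies in $l^r(\obj \dmr\lan m+1\ra)$: the kernel of the localization $l^m_r$ is the localizing subcategory generated by the image of $\dmer\lan m+1\ra$, not that object-image itself, and producing a weight truncation inside the image is precisely the (nontrivial) weight lifting property; the paper invokes Theorem 3.3.1 of \cite{bsosnl} for this when $r<+\infty$. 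Your ``exactly when it is $\lan m+1\ra$-effective'' hides this entire theorem.

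Second, the crux (a)$\Rightarrow$(b), which you yourself identify as the main obstacle, is left to ``induction on $n$ that peels off weight pieces, as in \cite{bsoscwhn}'', and this misdescribes the mechanism. The paper's induction is on the twist index $s$ running from $-1$ to $m$, not on weight pieces: assuming $l^s_r(N)\in \dmr^s{}_{\wchow^s\ge 1}$, one takes a weight decomposition $\wchow^{s+1}{}_{\le 0}N_{s+1}\stackrel{g_{s+1}}{\to}N_{s+1}\to \wchow^{s+1}{}_{\ge 1}N_{s+1}$, kills $l^s_{s+1}(g_{s+1})$ by orthogonality, factors $g_{s+1}$ through a coproduct of objects $\mgr^{s+1}(P_a)\lan s+1\ra$ by combining Proposition \ref{pbw}(\ref{ifactp}) with the \emph{strict} weight-exactness of $\lan s+1\ra$ (Proposition \ref{pwchow}(\ref{iprest})), and then kills $g_{s+1}$ outright by the corepresentability of assertion 3 applied to $\chw^0_{s+1}(N_{s+1,k(P_a)})$, so that $N_{s+1}$ is a retract of $\wchow^{s+1}{}_{\ge 1}N_{s+1}$; the case $m=r=+\infty$ then requires a separate argument reducing to finite $m=\dim P_a$ and using $\mgr(P_a)\perp \mgr(\spv)\lan d+1\ra$. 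None of these steps is supplied by your outline, and since the very point of this proposition is to extend \cite{bsoscwhn} beyond geometric motives to $\wchow$-bounded below objects of $\dmer$, an appeal to the geometric case does not substitute for them.
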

\begin{proof}
1. All the statements easily follow from Proposition \ref{pwt}(\ref{iwcpu}, \ref{iwcpuc}) along with Proposition \ref{pwchow}(\ref{ipext}); note here that %To see this one should only note that 
the corresponding motivic homology functors %along with the base field change functor $\dmer\to \dmerb(K^{perf})$ %along with  
  %are  well-known to 
	respect coproducts  since they are corepresented by compact objects of $\dmrbl$ (see \ref{smot}). %, and the latter functor is  weight-exact immediately from Proposition \ref{pmot}(\ref{ie1}) below. % and the motivic homology functors respect coproducts in $\chower$ since they do so in $\dmrb$.
%The first part of the assertion is just a particular case of %Proposition \ref{pbwcomp}(\ref{iwcoh}); recall here that $\wchow$ is smashing by Proposition \ref{pwchow}(\ref{ip1}). The second part follows immediately from the weight-exactness of this base field change functor.

2. To prove the first part of the assertion we should verify that $\chw_j^{0}(-_K,R,l)\circ \lan r+1 \ra=0$. Recall that the functor $-\lan r+1\ra:\dmer\to \dmer$ is weight-exact with respect to $\wchow$ by Proposition \ref{pwchow}(\ref{iprest}); hence  Proposition \ref{pwt}(\ref{iwcpu} , \ref{iwcpuc}) reduces the statement to the vanishing of the restriction of  $\chowm_j(M^s_K,R,l)$ to $\chower\lan r+1\ra$. The latter fact is given by Proposition \ref{pvan}.

To prove the "moreover" statement we invoke Proposition \ref{pwt}(\ref{iwcpuc}) once again. %Clearly we can assume $i=0$.
  Since the functor $\dmerb\to\ab$ induced by $\chw_j^{0}(-_K,R,l)$ respects coproducts, % (as well; one may apply the statements mentioned above once again), 
	 it suffices to note that its restrictions to $\mgr^r(\spv)[s]$ for $s\neq 0$ vanish since the functor $\chw_j^{0}(-_K,R,l)$ is pure with respect to $\wchow$. %???Proposition \ref{pwchow}(\ref{ipext}); here we apply % the aforementioned properties of base change and motivic homology.

%The statement  easily follows from  Proposition \ref{pvan}(2) implies that the corresponding functor $\ca$ kills 
% combined with the "functoriality" with respect to weight-exact functors provided by Proposition \ref{pwt}(\ref{iwcpu}) (see also Proposition \ref{pwchow}(\ref{iploc})).

3. %Apply uniqueness or compute directly???
Since the object $l^j(\mgr(P)\lan j\ra)$ is compact in $\dmr^j$ (see Proposition \ref{winloc}(1)), both of the functors in question respect coproducts.  Now, they are also homological, and the functor  $ \chw_{j}^0(-_{k(P)},R)$ is $\wchow$-pure by definition; % (see Remark \ref{rwc}(2));????
  thus to obtain the %equality
	  isomorphism in question it suffices to compare their restrictions to the categories $l^j(\chower)[i]\subset \obj \dmr^j$ for $i\in \z$. 

Next, recall that the localization $\dmger/\dmger\lan j+1\ra$ embeds into $\dmr^j$ by Proposition \ref{winloc}(1). Thus %%we can apply 
Proposition 2.2.5(6) of \cite{bsoscwhn}  yields the result easily. %to obtain the result.

4. Clearly, we can assume %that 
 $n=0$. 

By Proposition \ref{pwt}(\ref{iwcons}) the corresponding weight complex $\tst(N)$ is homotopy equivalent to a complex  concentrated in non-positive degrees. % at most $0$. 
 Next we recall that the functor $\chw_{j}^{0}$ %(N_K,l)=\ns$ 
 is pure with respect to %the Chow weight structure on the corresponding category
 $\wchow^r$ (see Definition \ref{dcwh}(1) and assertion 2 of this proposition); applying the relation of pure functors to weight complexes we obtain that $\chw_{j}^{i}(N_K,l)=\ns$ for all $i>0$ (and the corresponding values of $j$) indeed.

Lastly,  $\chw_{j}^{i}(N_K,l)=\ns$ if $l<0$ since the corresponding restriction $\ca$ of the functor $N\mapsto \chowm_j(N_K,R,l)$ to $\hwchow$ is zero immediately from the connectivity of the category $\chower(K\perf)$. %all Chow motives?? Cancellation??

5. %Now let us prove the second part of the assertion. 
%Since 
If $j<m$ then the functor $\chw_{j}^{i}(-_K)=\ns$ factors through $l_m^r$ by assertion 2; hence the implication (b) $\implies$ (a) %is given by 
 follows from assertion 4. Next, condition (b) trivially implies condition (c) if $r=+\infty$, and it %also implies condition (c) 
does so in the case %$r\in \z$ 
 $r<+\infty$ as well by Theorem 3.3.1 of \cite{bsosnl}; cf. Remark 3.3.2(1) of loc. cit. %(see also ; cf. (the proof of) \cite[Proposition 1.3.2(2)]{bscwh}.
  Moreover, recall that %both the functor $l^m$ and 
	 the functor $l^m_r$ is weight-exact by Proposition \ref{pwchow}(\ref{iploc}); hence condition (c) implies condition (b).
		
		%condition (c)  implies that $N_{m}$ is a retract of $\wchow_{ \le -n}N$ 

%Next, 
It remains to verify that (a) implies (b). We assume $n=0$ once again. %If condition (b) is fulfilled then 

First assume that $m<+\infty$. %Then
  It clearly suffices to verify that condition (a) implies the following: % (under our vanishing assumptions): 
 if  $-1\le s<m$ %(resp. $r\le s<m$)
  and %the  image $N_s$ of $N$ in  $\dmr^s$ belongs to
	$l_r^s(N)\in\dmr^s{}_{\wchow^s\ge 1}$ then %the corresponding image
	 $N_{s+1}$ belongs to $\dmr^{s+1}{}_{\wchow^{s+1}\ge 1}$.

We take a weight  decomposition  % $\wchow_{ \le 0}N\stackrel{g}{\to} N\to \wchow_{\ge 1}N$ (resp.  
%\begin{equation}\label{wdsp}
$\wchow^{s+1}{}_{ \le 0}N_{s+1}\stackrel{g_{s+1}}{\to} N_{s+1}\to \wchow^{s+1}{}_{ \ge 1}N_{s+1}$ %\end{wdsp}
 of $N_{s+1}$, and 
%We apply the corresponding 
 apply the localization $l^s_{s+1}:\dmr^{s+1}\to \dmr^{s}$. Since %it 
  $l^s_{s+1}$ is weight-exact, % (see Proposition \ref{pwchow}(\ref{iploc})), 
 we have $l^s_{s+1}(\wchow^{s+1}{}_{ \le 0}N_{s+1})\in \dmr^{s}{}_{\wchow^{s}\le 0}$. Since $l_r^s(N)=l^s_{s+1}(N_{s+1})$, the orthogonality axiom (iii) of Definition \ref{dwstr} gives $l^s_{s+1}(g_{s+1})=0$. 

Next, Proposition \ref{pbw}(\ref{iwd0}) implies $\wchow^{s+1}{}_{ \le 0}N_{s+1}\in   \dmr^{s+1}{}_{\wchow^{s+1}=0}$. Thus combining  %Corollary \ref{cfactor}(2) 
 Proposition \ref{pbw}(\ref{ifactp}) with Proposition \ref{pwchow}(\ref{iprest})  we obtain that $g_{s+1}$ factors through an element of $ \dmr^{s+1}{}_{\wchow^{s+1}=0}\lan s+1 \ra$; thus it factors through a coproduct of $\mgr^{s+1}(P_a)\lan s+1\ra$ for some (connected) varieties $P_a\in\spv$. 

Applying assertion 3, we obtain that  $\mgr^{s+1}\lan s+1\ra(P_a)\perp N_{s+1}$ %We note that  $\mgr^{s+1}\lan s+1\ra(P_a)\perp N_{s+1}$ by assertion 3.
 since  $ \chw_{s+1}^0(N_{s+1,k(P_a)})=0$.   Thus $g_{s+1}=0$. It follows that $N_{s+1}$ is a retract of $\wchow^{s+1}{}_{ \ge 1}N_{s+1}$; hence $N_{s+1}$ belongs to $\dmr^{s+1}{}_{\wchow^{s+1}\ge 1}$ indeed.

It remains to consider the case $m=r=+\infty$. Similarly to the argument above, it suffices to verify that $\mgr(P_a)\perp N$ for any smooth projective $k$-variety $P_a$. Now, assume that $P_a$ is of dimension  $d$ and apply the equivalence of our conditions in the case $m=d$ (note that we have just proved it). Condition (b) in this case gives a weight decomposition triangle with $\wchow^{d+1}{}_{ \leq 0 }N_{d+1}\in \obj(\chower{}^{\widehat\oplus})\lan d+1\ra$ (see the arguments above). The definition of Chow groups immediately implies that $\mgr(P_a)\perp\mgr(\spv)\lan d+1\ra$; hence $\mgr(P_a)\perp \wchow^{d+1}{}_{\leq 0}N_{d+1}$ as well. Since $\mgr(P_a)\perp \wchow^{d+1}{}_{ \ge 1}N_{d+1}$ by the orthogonality axiom for $\wchow$, we obtain that $\mgr(P_a)\perp N$ indeed.

6. %Recall that $DM^{eff,\thomr\le 0}_{R}$ is the smallest extension-closed subclass of $\obj\dmerb$ that is closed with respect to coproducts and contains $\obj\chower(a)[a+b]$ for all $a,b \ge 0$ (see 
 The first part of the assertion is given by Theorem 2.4.3 along with Example 2.3.13 %???5(1)
  of \cite{bondegl}; it also can be easily deduced from Theorem 2.2.1(3) of  \cite{bzp} (cf. also Theorem 6.2.1(1) of \cite{mymot}).
Next, the Chow-weight homology functors respect coproducts; hence 
 the vanishing in question follows from %this statement along 
 this first part combined with Proposition \ref{pvan}.
%7. Immediate from Corollary \ref{cfactor}(1).
\end{proof}

\begin{rema}\label{bbard}
Since the weight structure $\wchow$ is right degenerate (at least, %if $R$ is not torsion and $k$ is of infinite transcendence degree over its prime subfield; 
 in some cases; see Proposition \ref{pwchow}(\ref{ip2})), and for right weight-degenerate objects $M$ the weight complex $t_{R}(M)$ vanishes,  below we will mainly concentrate on $\wchow$-bounded below  motives. %objects. %As an example,
 Recall here that Lemma 2.4 of \cite{ayconj} gives an interesting example of right $\wchow-$degenerate motif; see Proposition 3.2.6 of \cite{bwcomp}. Note also that this motif is infinitely effective, i.e., belongs to $\cap_{r\ge 0}\obj\dmer\lan r\ra$; cf. Remark \ref{rinfe} below.

Another evidence for %the %fact that it is difficult to apply 
  certain problems with applying arguments %of the sort we use here 
	 similar to our ones to objects that are not $w$-bounded below is given by Remark 2.2.6(3) of ibid. 

%nafig??? In the introduction?! Да, наверное туда.
2. Recall that Chow-weight homology for geometric motives was introduced and thoroughfully studied in \cite{bsoscwhn}. Our version of the theory is the only pure extension of this homology theory that respects coproducts (see Proposition \ref{pwt}(\ref{iwcpuc})).
%??However, the definition of Chow-weight homology only on geometric motives does not allow one to prove some statements about the relation with motivic homology (see Lemma \ref{hhom} below).
\end{rema}

\subsection{Some Chow-weight homology vanishing criteria}\label{scwhp} %{On properties equivalent to the vanishing of Chow-weight homology}

To formulate our statements in the most general case we recall the following technical definition.

\begin{defi}\label{dreasi}
1. Let $\ii$ be a subset of $\z\times  [0,+\infty)$ (see \S\ref{snotata}).

We will call it a {\it staircase} set if for any $(i,j)\in \ii$ and  $(i',j')\in \z\times  [0,+\infty)$ such that $i' \ge i$ and $ j' \le j$ we have $(i',j') \in \ii$. 

For $i\in \z$  the minimum of $j\in [0,+\infty]$ such that $(i,j) \notin \ii$ will be denoted by $a_{\ii,i}$.

2. For $ m \in \mathbb{Z}$ we will write $d_{\leq m} \dmerb$ for the localizing subcategory of $\dmerb$ generated by $\{M_{R}(X)\}$ for $X$ running through smooth $k$-varieties of dimension at most $m$; thus this category is zero if $m<0$.

Moreover, we will denote by $d_{\le m}\chower$ the subcategory of $\chower$ consisting of the retracts of the motives of smooth projective varieties of dimension at most $m$.
\end{defi}

\begin{rema}\label{rstair}
Obviously, $\ii\subset \z\times  [0,+\infty)$ is a staircase set if and only if it equals the union of the strips $\bigcup\limits_{(i_0,j_0) \in \ii} \ii_{i_0,j_0}$, where $\ii_{(i_0,j_0)} = [i_0,+\infty) \times [0,j_0]$. 

Consequently, the union of any set of staircase sets is a staircase set as well; cf. Theorem \ref{tstairs}(\ref{itstairs5}) below.
\end{rema}

Now we study the vanishing of $\chw_{*}^{*}(M_K)$ in staircase degrees.

\begin{theo}\label{tstairs}
Let  $\ii\subset \z\times  [0,+\infty)$ be a staircase set, $M\in  \dmerpl$ (see Definition \ref{dwso}(\ref{lrbo})). 
Then the following statements are valid.

 %1. The vanishing of $\chw_{j}^{i}(M_K)$ for all function fields $K/k$ and all $(i,j)\in \ii$ is equivalent to the same vanishing for all field extensions $K/k$.
 \begin{enumerate}
\item\label{itstairs2}
  %Suppose that $\ii$  is a staircase set. Then the 
	The following conditions are equivalent.

A. $\chw_{j}^{i}(M_K)=\ns$ for all function fields $K/k$ and $(i,j) \in \ii$.

B. The object $l^{j}(M)$  belongs to $DM_{\wchow \ge -i+1}^{R,j}$ whenever $(i,j) \in \ii$.

C. For any $i\in \z$ there exists a choice of  $\wchow{}_{\le -i}M $ that belongs to  $\obj\dmerb\lan a_{\ii,i}\ra$. %??; recall here that we define $\chower \lan +\infty\ra = \dmerb\lan  +\infty\ra=\ns$.

D. $M$ belongs to the %extension-closure of $\cup_{i\in \z }(\obj (\chower)^{\widehat \oplus}[-i]\lan a_{\ii,i}\ra)$.
smallest extension-closed class $D_{\ii}$ of objects of $\dmer$ that is also closed with respect to coproducts and contains $\cup_i\obj (\chower)[-i]\lan a_{\ii,i}\ra$.

E. There exists %a choice of a weight complex for 
 a $\chower$-complex $\tilde{t}M\cong t(M)$ such that its $i$-th term $\tilde M^i$  is an object of  $\chower\lan a_{\ii,i}\ra^{\widehat\oplus}$. %is a coproduct of $j+1$-effective Chow motives whenever $(i,j) \in \ii$.

\item\label{itstairs3}
 If %$\ii$  being a staircase set and 
 $M\in DM^{eff}_{R,[a,b]}$ (for some $a\le b\in \z$) then %the (equivalent) conditions of the previous assertion are fulfilled
 $M$ belongs to $D_{\ii}$ (see Condition \ref{itstairs2}.D)    if and only if $M$ belongs to the extension-closure of $\cup_{-b\le i\le -a }(\obj (\chower\lan a_{\ii,i}\ra^{\widehat \oplus})[-i]$).

\item\label{itstairs4}
%Assume that 
 If $M$ is of dimension  at most $r\ge 0$ %and that $\ii$ is a staircase set. Then 
 then the (equivalent) conditions of  assertion  \ref{itstairs2} are also equivalent to the following ones:

A'. $\chw_{j}^{i}(M_K)=\ns$ %for all function fields $K/k$ and 
whenever  $(i,j) \in \ii$ and $K=k(P)$, where $X$ is a smooth projective $k$-variety of dimension at most $r-j$.

C'. For any $i\in \z$ there exists a choice of  $\wchow{}_{\le -i}M $ that belongs to  $\obj(d_{\le r-a_{\ii,i}}\dmerb)\lan a_{\ii,i}\ra$. %??; recall here that we define $\chower \lan +\infty\ra = \dmerb\lan  +\infty\ra=\ns$.

D'. $M$ belongs to the %extension-closure of $\cup_{i\in \z }(\obj (\chower)^{\widehat \oplus}[-i]\lan a_{\ii,i}\ra)$.
smallest extension-closed class %$D'_{\ii}$???
  of objects of $\dmer$ that is also closed with respect to coproducts and contains $\cup_i\obj (d_{\le r-a_{\ii,i}}\chower)[-i]\lan a_{\ii,i}\ra$.

E'. There exists %a choice of a weight complex for 
 a $\chower{}^{\widehat \oplus}$-complex $\tilde{t}M\cong t(M)$ such that its $i$-th term $\tilde M^i$ is an object of  $(d_{\le r-a_{\ii,i}}\chower)\lan a_{\ii,i}\ra^{\widehat\oplus}$.
%is a coproduct of $j+1$-effective Chow motives whenever $(i,j) \in \ii$.

  Moreover, a similar modification can also be made in assertion \ref{itstairs3}.

\item\label{itstairs5} Assume that $\ii_j$ are staircase sets for $j$ running through some index set $J$, and $\ii=\cup\ii_j$. Then 
$M$ belongs to $D_{\ii}$ if and only if it belongs to $\cap_j D_{\ii_j}$.
%for a fixed  an object $M$ of 

%the %(equivalent) conditions of assertion  \ref{itstairs2} are fulfilled for  all $\ii_j$  if and only if they are fulfilled for $\ii$. 
\end{enumerate}
\end{theo}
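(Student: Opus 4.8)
===PROOF PROPOSAL===

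The plan is to prove Theorem \ref{tstairs} as a single interlocking package, establishing the equivalence of conditions A--E in assertion \ref{itstairs2} first, since the remaining assertions are refinements or reformulations built on top of it. The natural strategy is to reduce everything to the single-strip situation already handled in Proposition \ref{pr1}(5), and then assemble the staircase from its constituent strips using Remark \ref{rstair}. First I would fix $i\in\z$ and observe that, by the staircase hypothesis, the condition ``$(i,j)\in\ii$ for all $j<a_{\ii,i}$'' exactly isolates a strip $[i,+\infty)\times[0,a_{\ii,i}-1]$; thus for each fixed $i$ the vanishing in condition A is precisely the hypothesis (a) of Proposition \ref{pr1}(5) applied with $m=a_{\ii,i}-1$ and $r=+\infty$ (using that $M\in\dmerpl$, so a shift brings us into $\dmrr_{\wchow\ge -n}$ for suitable $n$). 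This should give the equivalence A $\Leftrightarrow$ B directly, since Proposition \ref{pr1}(5)(b) says $l^m(M)\in\dmr^m{}_{\wchow^m\ge -i+1}$, which is condition B after translating the index conventions.

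Next I would derive the equivalence B $\Leftrightarrow$ C. Condition C asks for a weight truncation $\wchow{}_{\le -i}M$ lying in $\obj\dmerb\lan a_{\ii,i}\ra$, and this is exactly the content of Proposition \ref{pr1}(5)(c) (with the level $m=a_{\ii,i}$), combined with the strict weight-exactness of $-\lan r+1\ra$ from Proposition \ref{pwchow}(\ref{iprest}), which lets one pass between effectivity of a truncation and membership of the localized object in the shifted weight class. The equivalence with the weight-complex description E is where Proposition \ref{pwt}(\ref{iwcw}) enters: once C holds for every $i$, the cones $\tilde M^i=\co(j_{-i-1})$ are computed from truncations lying in $\obj\dmerb\lan a_{\ii,i}\ra$, and Proposition \ref{pwchow}(\ref{iptrun}) identifies them as objects of $\dmer_{\wchow=0}\lan a_{\ii,i}\ra=\chower\lan a_{\ii,i}\ra^{\widehat\oplus}$, giving E; conversely E forces the truncations back into the effective subcategory. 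Condition D, the coproduct-and-extension-closure formulation, I would obtain from E by noting that an object with a weight complex whose $i$-th term lies in $\chower\lan a_{\ii,i}\ra^{\widehat\oplus}[-i]$ is built from those terms by the filtration of Proposition \ref{pwt}(\ref{iwcw}) (its last paragraph), hence lies in the extension-closure $D_{\ii}$; the reverse inclusion $D_{\ii}\subset\{A\text{ holds}\}$ follows because each generator $\chower[-i]\lan a_{\ii,i}\ra$ satisfies the vanishing by Proposition \ref{pr1}(6) and the functors $\chw_j^i$ are homological and respect coproducts.

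For assertion \ref{itstairs3} I would use that a $\wchow$-bounded object $M\in DM^{eff}_{R,[a,b]}$ has a weight complex concentrated in the finite range $[-b,-a]$ (Proposition \ref{pwt}(\ref{iwcons})), so the infinite extension-closure in D collapses to the finite one stated. Assertion \ref{itstairs4} is the dimension-$\le r$ refinement: here the key extra input is Proposition \ref{pr1}(3), which identifies $\chw_j^0(-_{k(P)},R)$ with $\dm^j_R(l^j(\mgr(P)\lan j\ra),-)$, allowing one to restrict the test fields to $K=k(P)$ with $\dim P\le r-j$; the dimension bounds then propagate through the same truncation and weight-complex arguments with $\chower$ replaced by $d_{\le r-a_{\ii,i}}\chower$. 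Finally, assertion \ref{itstairs5} should be nearly formal once A $\Leftrightarrow$ D is in hand, since the vanishing condition A for $\ii=\cup_j\ii_j$ is literally the conjunction over $j$ of the vanishing conditions for each $\ii_j$, whence $M\in D_\ii\Leftrightarrow M\in\cap_j D_{\ii_j}$.

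\textbf{Main obstacle.} I expect the principal difficulty to lie in the careful bookkeeping that makes A $\Leftrightarrow$ B into a genuinely \emph{per-index} statement compatible across all $i$ simultaneously: Proposition \ref{pr1}(5) is stated for a single $N\in\dmrr_{\wchow^r\ge-n}$ and a single range $m$, and one must check that the truncations and the passage to slices $l^j$ can be chosen coherently for every $i$ at once, so that a single weight complex $\tilde t(M)$ realizes all the effectivity constraints encoded by the function $i\mapsto a_{\ii,i}$. The uniqueness and functoriality clauses of Proposition \ref{pwt}(\ref{iwcw}) (the existence of the connecting maps $j_i$ and the filtration statement) are what should make this coherence possible, but verifying that the staircase shape is respected termwise — rather than only strip-by-strip — is the step that requires genuine care rather than routine citation.
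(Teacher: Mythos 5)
Your handling of conditions A, B, C is essentially the paper's argument (strips via Remark \ref{rstair} plus Proposition \ref{pr1}(5), with $M\in\dmerpl$ supplying the boundedness hypothesis), and your closure argument for $D\Rightarrow A$ — generators killed by Proposition \ref{pvan}, plus homologicity and coproducts — is a valid variant of what the paper does. The genuine gap is at condition E. You assert both that "E forces the truncations back into the effective subcategory" and that $E\Rightarrow D$ follows from the filtration clause of Proposition \ref{pwt}(\ref{iwcw}); neither works as stated. Condition E only says that \emph{some} complex isomorphic to $t(M)$ in $K(\hwchow)$ has terms in $\chower\lan a_{\ii,i}\ra^{\widehat\oplus}$; the terms of such a representative need not be the cones $\co(j_{-i-1})$ of any tower of weight truncations of $M$, since a homotopy equivalence in $K(\hwchow)$ can change the terms drastically. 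So from E alone you cannot invoke Proposition \ref{pwchow}(\ref{iptrun}) or the extension-closure clause of Proposition \ref{pwt}(\ref{iwcw}); read charitably, your "$E\Rightarrow D$" is really $C\Rightarrow D$ (the tower with effective cones comes from C, exactly as in the paper). Your scheme then gives $A\Leftrightarrow B\Leftrightarrow C\Rightarrow E$ and $C\Rightarrow D\Rightarrow A$, but no implication \emph{out of} E, so E's equivalence is never established. The paper closes the cycle with $E\Rightarrow A$: the pure functor $\chw_j^i(-_K)$ is computed as a homology group of $\chowm_j((-)_K,R)$ applied to \emph{any} representative of $t(M)$ — so $\chw_j^i(M_K)$ is a subquotient of $\chowm_j(\tilde M^i_K,R)$, which vanishes by Proposition \ref{pvan} for $j<a_{\ii,i}$. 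This argument is homotopy-invariant, unlike yours, and you already have all its ingredients in your $D\Rightarrow A$ step.

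Two further omissions. First, even with the tower from C, the filtration clause of Proposition \ref{pwt}(\ref{iwcw}) only places the truncations $\wchow{}_{\le m}M$ in $D_{\ii}$; since $M\in\dmerpl$ is in general unbounded above, passing to $M$ itself requires the countable homotopy colimit triangle the paper cites from \cite{bsnew} (using that $\wchow$ is smashing and left non-degenerate), after which closure of $D_{\ii}$ under coproducts and extensions captures $M$. Relatedly, your one-line treatment of assertion \ref{itstairs3} ("the weight complex is bounded, so the extension-closure collapses") is not a proof: the paper's argument chooses $\wchow{}_{\le a}M=0$ and $\wchow{}_{\le b}M=M$ (noting that condition D forces $M\in\obj\dmer\lan a_{\ii,-b}\ra$, which is needed for the top truncation to be admissible) and reruns the finite-tower argument. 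Second, in assertion \ref{itstairs4} your proposed mechanism — Proposition \ref{pr1}(3) plus "propagation" — misses the essential structural input: in the induction of Proposition \ref{pr1}(5) the morphism $g_{s+1}$ must be shown to factor through motives of varieties of dimension at most $r-j$, and for this one needs that $\wchow$ restricts to the $j$-effective, $\wchow$-bounded-below part of $d_{\le r}\dmerb$ with heart $(d_{\le r-j}\chower)\lan j\ra^{\widehat\oplus}$ (Theorem 2.2 and Proposition 1.7 of \cite{binters}); the paper flags precisely this as the extra statement that makes the arguments carry over. Your reading of assertion \ref{itstairs5} as formal from condition A agrees with the paper.
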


\begin{proof} 
%1. Easy from Proposition 2.3.4 of \cite{bsoscwhn}.
 \ref{itstairs2}. Consider the strip $\ii_{(i_0,j_0)} = [i_0,+\infty) \times [0,j_0]$. Applying  Proposition \ref{pr1}(5) %repeatedly 
 inductively we easily obtain that the vanishing of $\chw_{j}^{i}(M_K)$ for all $(i,j) \in \ii_{(i_0,j_0)}$ is equivalent to $l^{j_{0}}(M)\in DM^{R,j_{0}}_{\wchow \ge -i_{0}+1}$; cf. the proof of \cite[Theorem 3.2.1(2)]{bsoscwhn}.
Using Remark \ref{rstair} we get the equivalence A$\Leftrightarrow$ B. 
Similarly, the  %implication B$\Rightarrow$C %instantly follows from Proposition \ref{pr1}(5).\\  %\ref{pr1}(4)???
 %easily follows from the weight-exactness; see 
% can easily be verified using the arguments used in the proof of  Proposition \ref{pr1}(4).
equivalence B $\Leftrightarrow$ C  follows from Proposition \ref{pr1}(5) as well.

Next,  we can take $w_{\le i}M=0$ for $M$ small enough. Thus if condition C is fulfilled then combining Proposition \ref{pwchow}(\ref{iptrun}) with the extension-closure statement in Proposition \ref{pwt}(\ref{iwcw})  we obtain that the choices of $\wchow{}_{\le -i}M$ provided by condition C actually belong to the class $D_{\ii}$. 

Now, $\wchow$ is smashing and left non-degenerate by Proposition \ref{pwchow}(\ref{ip1}). Hence for (any object of $M$ of $\dmer$ and) any choices of  $\wchow{}_{\ge j}M$ there exists a ({\it countable homotopy colimit}) distinguished triangle $\coprod_{j\ge 0} \wchow{}_{\ge j}M\to \coprod_{j\ge 0} \wchow{}_{\ge j}M \to M\to \coprod_{j\ge 0} \wchow{}_{\ge j}M[1]$; see Theorem 4.1.3(1,2), Definition 4.1.1, and Remark 1.2.6(1) of \cite{bsnew}. %Now, if condition C is fulfilled then applying Proposition \ref{pexw} (along with Proposition \ref{pwchow}(\ref{iprest}))  %there exist choices of 
%we can %assume that all  take 
 %we obtain the existence (of choices) of 
 Thus if we take $ \wchow{}_{\ge j}M $ that belong to  $D_{\ii}$ then we conclude that $M$ belongs to $D_{\ii}$ as well. % the class specified in condition D. % by Proposition \ref{pbwcomp}(\ref{iwext}). %ok?? see assertion 3. %???
Consequently, condition C implies condition D. 

%Moreover, condition D implies condition E by 

Next, we prove the implication $D \Rightarrow E$ %can be proved 
 similarly to  Proposition 2.3.2(9) of \cite{bwcomp} (thus one can use weak weight complexes instead of strong ones in our proof). %as we do). %Since the
 Recall that the  functor $\tst$ is exact and respects coproducts. Since the class $C_{\ii}$ of objects of $K(\chower)$ that are isomorphic to the ones satisfying our effectivity assumptions on terms is obviously closed with respect to extensions and coproducts, the class $D'_{\ii}$ of those %$N\in \obj \chower$ 
 $N\in \obj\dmer$ such that $\tst(N)\in  C_{\ii}$ is closed with respect to extensions and coproducts as well. Since $D'_{\ii}$ obviously contains  $\cup_i\obj (\chower)[-i]\lan a_{\ii,i}\ra$, we obtain $D_{\ii}\subset D'_{\ii}$.
%the equivalence between conditions C, D and E follows from Remark \ref{trunprop} and Proposition \ref{pbwcomp}(\ref{iwext}).\\

 For the remaining implication $E \Rightarrow A$ note that if $\tilde{t}(M)=(\tilde M^i)$ then $\chw_{j}^{i}(M_K)$ is a subquotient of $\chowm_{j}(\tilde M_{K}^{i}, R)$, and the latter  group vanishes by Proposition \ref{pvan}.

 \ref{itstairs3}. Assume that $M$ satisfies Condition $C$ of the previous assertion and belongs to $ DM^{eff}_{R,[a,b]}$. Then we can set $\wchow{}_{\le a}M = 0$. Moreover,  condition D in condition \ref{itstairs2} implies that $M$ % belongs to
is an object of $ \dmer\lan a_{\ii,-b}\ra$; thus if we set 
 $\wchow{}_{\le b}M = M$ and choose $\wchow{}_{\le -i}M$ to belong to  $\obj \dmer\lan a_{\ii, i}\ra$ for $-b<i<-a$ then this condition would be fulfilled for our choices of weight truncations for $-b\le i \le -a$. 

Now we  combine Proposition \ref{pwchow}(\ref{iptrun}) with % the extension-closure statement in 
 Proposition \ref{pwt}(\ref{iwcw}). Similarly to the proof of the implication   $C \Rightarrow D$ in the previous assertion, we obtain that $\wchow{}_{\le b}M = M$ belongs the extension-closure in question.

\ref{itstairs4}. Obviously, condition A of assertion \ref{itstairs2} implies our condition A', whereas conditions C', D', and E' imply conditions \ref{itstairs2}.C, \ref{itstairs2}.D, and \ref{itstairs2}.E respectively. Thus it suffices to verify that these conditions are equivalent, and also imply the "bounded dimension" version of assertion \ref{itstairs3}.

%pre-requisites from \cite{binters}
Next, it can be easily checked that the arguments above carry over to our setting if one invokes the following %pre-requisites from \cite{binters}
 statements: for any $j\ge 0$ the weight structure $\wchow$ restricts to the (triangulated) subcategory of $d_{\le r}\dmer$ whose objects are the $j$-effective and $\wchow$-bounded below (in $\dmer\supset d_{\le r}\dmer$) motives, 
%\cap \dmer\lan j\ra$ for any $j\ge 0$, 
and the heart of this restriction equals $(d_{\leq r-j} \chower)\lan j\ra^{\widehat\oplus}$. Now, this statement easily follows from Theorem 2.2 of \cite{binters} (along with its proof and Proposition 1.7 of ibid. that give the calculation of the heart). % where the corresponding heart is essentially described).  %restrict to $\wchow$-bounded below objects or apply?!

We leave the detail for this argument to the reader, since we will not apply this assertion below. %One
 We only note that %should consider
 we propose to take the subcategories of dimension at most $r$ in the localizations of the type $\dmr^j$ in it and not to consider the corresponding localizations of  $d_{\le r}\dmer$ (even though the latter can probably be used as well; cf. Proposition 2.2.5(7) of \cite{bsoscwhn}). %allows to consider "bounded dimension" localizations as well??!, since it is not clear to the authors whether the corresponding calculations can be made in localizations of $d_{\le r}\dmer$.

%The argument is essentially the same as for the equivalence A$\Leftrightarrow$ B in the proof of assertion \ref{itstairs2}. One should apply the following equalities given by Theorem 2.1.1 and Proposition 2.3.3 of \cite{binters}: $$\obj\dmerb\lan a_{\ii,i}\ra \cap \obj d_{\le r}\dmerb =  \obj (d_{\le r-a_{\ii,i}}\dmerb)\lan a_{\ii,i}\ra$$ and $$\obj \chower\lan a_{\ii,i}\ra\cap \obj d_{\le r}\dmerb\break =\obj \chower\lan a_{\ii,i}\ra$$

\ref{itstairs5}. Obvious; see condition A in assertion \ref{itstairs2}.
%(one can also use Theorem 4.2.3 of $\cite{bos}$ or Proposition 3.1.1 of $\cite{bsnew}$).
\end{proof}

 \begin{rema}\label{rinfe} Taking $\ii=\z\times [0,+\infty)$ in our theorem we immediately obtain that any infinitely effective object of $\dmerpl$ is zero; cf. Remark \ref{bbard}(1).
\end{rema}

Now we relate our theorem to higher Chow-weight homology.

\begin{pr}\label{phcwh}
Let $\ii\subset \z\times [0,+\infty)$ and $M\in \dmerpl$. % (see Definition \ref{dwso}(\ref{lrbo})) be fixed.

Consider the following assumptions on $M$.
\begin{enumerate}
%\item\label{ir1} For some function $f_M:I\to  [0,+\infty)$  we have $\chw_{j-f_M(i,j)}^{i}(M_K,R,f_M(i,j))=\ns$ for all $(i,j)\in \ii$ and all function fields $K/k$.
\item\label{ir2}  $\chw_{j}^{i}(M_K,R)=\ns$ for all $(i,j)\in \ii$ and all function fields $K/k$.

\item\label{irrat} For all rational extensions $K/k$ and $(i,j)\in \ii$  we have $\chw_{j-1}^{i}(M_K,1)=\ns$.

\item\label{ir3}  $\chw_{0}^i(M_K,j)=\ns$ for all $(i,j)\in \ii$ and all function fields $K/k$.

\item\label{ir4} $\chw^i_a (M_K,j-a)=\ns$ for all $(i,j)\in \ii$, $a\in \z$, and all function fields $K/k$.  
\end{enumerate}

Then the following statements are valid.

1. Condition \ref{ir4} implies conditions \ref{ir3} and \ref{irrat},  and either of the latter two conditions implies  condition \ref{ir2}. 

2. Let $\ii$ be  a staircase set.  Then our conditions \ref{ir2}--\ref{ir4} are equivalent.
\end{pr}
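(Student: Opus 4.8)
The plan is to establish Assertion 1 by comparing the coefficient functors $N\mapsto\chowm_{\bullet}(N_K,R,\bullet)$ on $\hwchow$ that define the four homology theories, and then to deduce Assertion 2 as a short bootstrap on Theorem \ref{tstairs}. The implications $\ref{ir4}\Rightarrow\ref{ir3}$ and $\ref{ir4}\Rightarrow\ref{irrat}$ are immediate specializations of the free parameter $a$ in Condition \ref{ir4}: taking $a=0$ rewrites $\chw^{i}_{a}(M_K,j-a)$ as $\chw^{i}_{0}(M_K,j)$, which is Condition \ref{ir3}, whereas taking $a=j-1$ rewrites it as $\chw^{i}_{j-1}(M_K,1)$, whose vanishing over all function fields in particular covers the rational extensions of Condition \ref{irrat}.

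The substance of Assertion 1 is $\ref{ir3}\Rightarrow\ref{ir2}$ and $\ref{irrat}\Rightarrow\ref{ir2}$, for which I would use a contraction comparison. Writing $R\lan a\ra[b]=(R\lan a-1\ra[b+1])\otimes R(1)[1]$ with $R(1)[1]$ the reduced motive of $\mathbb{G}_m$, homotopy invariance together with the Gersten--residue splitting at the rational point $t=0$ of $\mathbb{A}^1$ of the homotopy-invariant sheaf $L\mapsto\chowm_{a-1}(N_L,R,b+1)$ realizes $\chowm_{a}(N_K,R,b)$ as a natural direct summand of $\chowm_{a-1}(N_{K(t)},R,b+1)$. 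Since both this inclusion and its retraction are natural in $N\in\hwchow$, applying the coefficient functors termwise to a weight complex and passing to homology (Proposition \ref{pwt}(\ref{iwcpu})) produces a split monomorphism $\chw^{i}_{a}(M_K,b)\hookrightarrow\chw^{i}_{a-1}(M_{K(t)},b+1)$. Iterating it $j$ times gives $\chw^{i}_{j}(M_K,0)\hookrightarrow\chw^{i}_{0}(M_{K(t_1,\dots,t_j)},j)$; as $K(t_1,\dots,t_j)$ is again a function field of $k$, Condition \ref{ir3} annihilates the target and forces Condition \ref{ir2}. A single step likewise gives $\chw^{i}_{j}(M_K,0)\hookrightarrow\chw^{i}_{j-1}(M_{K(t)},1)$, reducing $\ref{irrat}\Rightarrow\ref{ir2}$ to the vanishing of the right-hand side.

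I expect the main obstacle to be exactly this last point: Condition \ref{irrat} supplies vanishing only over \emph{rational} extensions, while $K(t)$ is rational over $k$ only when $K$ is. Upgrading the conclusion of $\ref{irrat}\Rightarrow\ref{ir2}$ from rational base fields to an arbitrary function field $K$ is the genuinely geometric, spreading-out content of the argument; I would import it from the corresponding statements of \cite[\S3.1]{bsoscwhn}, observing that all the functors in play respect coproducts and retracts, so that the comparison descends from $\chower$ to $\hwchow=\chower{}^{\widehat\oplus}$.

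For Assertion 2 I would close the cycle of implications using the staircase hypothesis. By Theorem \ref{tstairs}(\ref{itstairs2}), whose Condition A coincides with our Condition \ref{ir2}, Condition \ref{ir2} is equivalent to the existence of a weight complex $\tilde{t}M\cong t(M)$ whose $i$-th term satisfies $\tilde M^{i}\in\chower\lan a_{\ii,i}\ra^{\widehat\oplus}$. Granting this, for $(i,j)\in\ii$ and \emph{any} $a\in\z$ the group $\chw^{i}_{a}(M_K,j-a)$ is a subquotient of $\chowm_{a}(\tilde M^{i}_K,R,j-a)$; since $\tilde M^{i}$ is $a_{\ii,i}$-effective and $j<a_{\ii,i}$ (because $(i,j)\in\ii$ and $\ii$ is a staircase set), Proposition \ref{pvan} gives $\chowm_{a}(\tilde M^{i}_K,R,j-a)=0$, the vanishing extending from $\chower$ to its coproductive hull as the functor respects coproducts and retracts. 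As this vanishing is independent of $a$, the full Condition \ref{ir4} follows, so $\ref{ir2}\Rightarrow\ref{ir4}$; combined with the chains $\ref{ir4}\Rightarrow\ref{ir3}\Rightarrow\ref{ir2}$ and $\ref{ir4}\Rightarrow\ref{irrat}\Rightarrow\ref{ir2}$ from Assertion 1, this gives the equivalence of all four conditions.
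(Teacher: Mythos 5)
Your trivial implications and your Assertion 2 are correct, and Assertion 2 essentially coincides with the paper's argument: the paper combines Theorem \ref{tstairs}(\ref{itstairs2}) (conditions A and D) with Proposition \ref{pr1}(1,4), i.e., it verifies the vanishing on the generators of the class $D_{\ii}$ and uses that the vanishing class is closed under extensions and coproducts; you instead use condition E and observe that $\chw^i_a(M_K,j-a)$ is a subquotient of $\chowm_a(\tilde M^i_K,R,j-a)$, which Proposition \ref{pvan} kills since $(i,j)\in\ii$ and $\ii$ staircase force $j<a_{\ii,i}$. These are interchangeable (E is one of the equivalent conditions of the theorem), and your bookkeeping, including negative $a$ and the extension of the vanishing from $\chower$ to its coproductive hull, is sound. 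For Assertion 1 the paper itself only says the implications are "similar to Proposition 3.4.1 of \cite{bsoscwhn}", so your contraction argument for $\ref{ir3}\Rightarrow\ref{ir2}$ is a genuine reconstruction, and it is correct: you rightly insist on a \emph{split} monomorphism of coefficient functors (a mere injection would not survive passage to the homology of the weight complex), the retraction being the residue at $t=0$; one should just note that the Gersten--residue formalism applies to the strictly homotopy invariant \emph{homology sheaves} whose field values compute $\chowm$ and $\chw$, which is harmless since evaluation at function fields is exact.

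The genuine gap is $\ref{irrat}\Rightarrow\ref{ir2}$, and your diagnosis of what is missing is off target: no "spreading-out" or "descends from $\chower$ to $\hwchow$" observation can close it, because your mechanism is structurally unable to exploit rationality --- a split injection $\chw^i_j(M_K)\hookrightarrow\chw^i_{j-1}(M_{K(t)},1)$ proves nothing when $K(t)$ is not rational, i.e., for every $K$ you actually care about. The correct mechanism runs in the \emph{opposite} direction: it is the surjectivity of the total residue map, not the split injectivity of the generic-fiber map. For a strictly homotopy invariant Nisnevich sheaf with transfers $F$ over the perfect field $k$ one has the exact sequence
\begin{equation*}
0\to F(E)\to F(E(t))\to \bigoplus\nolimits_{x} F_{-1}(E(x))\to 0,
\end{equation*}
where $x$ runs through the closed points of $\mathbb{A}^1_E$; exactness on the right uses $F(\mathbb{A}^1_E)=F(E)$, purity, and $H^1_{Nis}(\mathbb{A}^1_E,F)=0$. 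Apply this to $F=\chw^i_{j-1}(M_-,1)$ (whose contraction $F_{-1}$ is $\chw^i_j(M_-)$, by the same twist computation you made) with $E=k(t_1,\dots,t_n)$ purely transcendental: condition \ref{irrat} kills $F(E)$ and $F(E(t))$, hence $F_{-1}(E(x))=\ns$ for \emph{all} monogenic finite extensions $E(x)/E$. Since $k$ is perfect, every function field $K/k$ is separably generated, hence a simple extension of some purely transcendental $E$; so $\chw^i_j(M_K)=\ns$ for every function field $K$, which is exactly why condition \ref{irrat} can afford to quantify over rational extensions only. This residue-sequence argument is the content of Proposition 3.4.1 of \cite{bsoscwhn} that both you and the paper outsource; with it inserted, your proof is complete.
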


\begin{proof} 
1. Obviously, condition \ref{ir4} implies all other ones. The proofs of the remaining implications are similar to that in Proposition 3.4.1 of \cite{bsoscwhn}.%Cite preliminaries???! (see also Remark 5.2.7(7) of \cite{bgn})

2. It remains to check that condition %For the remaining implication 
\ref{ir2} %$\Rightarrow$ 
 implies condition \ref{ir4}. %^, we %note that $M$ %motive as in 
 For this purpose we combine Theorem \ref{tstairs}(\ref{itstairs2}) (see Conditions A and D in it)  with %of Theorem \ref{tstairs} (2). Then our implication follows from 
 Proposition \ref{pr1}(1,4).
\end{proof} 

\begin{rema}
Note that the implication %$2 \Rightarrow 5$
 \ref{ir2} $\Rightarrow$ \ref{ir4}
 from Proposition \ref{phcwh} may be false if $\ii$ is not a staircase set.  For example, %consider the set 
 take $\ii = [2,+\infty) \times  [0,+\infty) \cup \ns \times [0,5]$. Then the motif $\mathbb{Q}\lan 1 \ra[-1]$ obviously satisfies the vanishing
 %conditions of $2$, 
 in condition  \ref{ir2}; yet $\chw^{2}_{1}(\mathbb{Q}\lan 1 \ra[-1],1)\cong \mathbb{Q}$. %This is one of the reasons for introducing the concept of staircase sets for our purposes.
\end{rema}

\subsection{On the relation to motivic homology}\label{smoth} %and slices??

Now we  define certain "steep" staircase sets and birational motives.  %introduce the following modernization of Definition $\ref{dreasi}$.

\begin{defi}\label{sstar}
1. Let $(i_0,j_0)\in \z\times  [0,+\infty)$. Then we define $S_{(i_0,j_0)}\subset \z\times  [0,+\infty)$ as the set $\{(i,j):\ i\ge i_0,\ 0\le j\le j_0+(i-i_0)\}$; %(%note that %see the figure 
%in \S\ref{sfig} below 
we illustrate this definition by marking in grey  the points of the  set $S_{(1,1)}$ %is drown 
 %are marked in grey on the picture in  \S\ref{sfig} below). %????????  % for the case $ \{i>j\ge 0\}\bigcup \{i+1>j\ge 0, i \geq 1\}$)). %picture??? where????????! 
on the following picture: \begin{figure}[h!] %\label{Figure 2}
\center
%\caption %*{Figure }
\begin{tikzpicture}
\draw [<->] (0,6) node [left] {$j$} -- (0,0) -- (7,0)node [below] {$i$} ;
\fill[fill=gray!40] (1,0) --(1,1)-- (2,1)--(2,2)--(3,2)--(3,3)--(4,3)--(4,4)--(5,4)--(5,5)--(6,5)--(6,6) -- (7,6) -- (7,0) -- cycle;
\draw[fill][blue] (1,1) circle [radius=0.06];
\draw[fill][blue] (0,0) circle [radius=0.06];
\draw[fill][blue] (2,2) circle [radius=0.06];
\draw[fill][blue] (3,3) circle [radius=0.06];
\draw[fill][blue] (4,4) circle [radius=0.06];
\draw[fill][blue] (5,5) circle [radius=0.06];
\draw[fill][blue] (6,6) circle [radius=0.06];
\draw[fill][blue] (1,2) circle [radius=0.06];
\draw[fill][blue] (2,1) circle [radius=0.06];
\draw[fill][blue] (3,1) circle [radius=0.06];
\draw[fill][blue] (1,3) circle [radius=0.06];
\draw[fill][blue] (4,1) circle [radius=0.06];
\draw[fill][blue] (5,1) circle [radius=0.06];
\draw[fill][blue] (1,4) circle [radius=0.06];
\draw[fill][blue] (1,5) circle [radius=0.06];
\draw[fill][blue] (0,1) circle [radius=0.06];
\draw[fill][blue] (0,2) circle [radius=0.06];
\draw[fill][blue] (0,3) circle [radius=0.06];
\draw[fill][blue] (0,4) circle [radius=0.06];
\draw[fill][blue] (0,5) circle [radius=0.06];
\draw[fill][blue] (0,6) circle [radius=0.06];
\draw[fill][blue] (1,0) circle [radius=0.06];
\draw[fill][blue] (2,0) circle [radius=0.06];
\draw[fill][blue] (3,0) circle [radius=0.06];
\draw[fill][blue] (4,0) circle [radius=0.06];
\draw[fill][blue] (5,0) circle [radius=0.06];
\draw[fill][blue] (6,0) circle [radius=0.06];
\draw[fill][blue] (6,1) circle [radius=0.06];
\draw[fill][blue] (6,2) circle [radius=0.06];
\draw[fill][blue] (6,3) circle [radius=0.06];
\draw[fill][blue] (6,4) circle [radius=0.06];
\draw[fill][blue] (6,5) circle [radius=0.06];
\draw[fill][blue] (5,2) circle [radius=0.06];
\draw[fill][blue] (5,3) circle [radius=0.06];
\draw[fill][blue] (5,4) circle [radius=0.06];
\draw[fill][blue] (5,6) circle [radius=0.06];
\draw[fill][blue] (4,2) circle [radius=0.06];
\draw[fill][blue] (4,3) circle [radius=0.06];
\draw[fill][blue] (3,2) circle [radius=0.06];
\draw[fill][blue] (4,5) circle [radius=0.06];
\draw[fill][blue] (4,6) circle [radius=0.06];
\draw[fill][blue] (3,4) circle [radius=0.06];
\draw[fill][blue] (3,5) circle [radius=0.06];
\draw[fill][blue] (3,6) circle [radius=0.06];
\draw[fill][blue] (2,3) circle [radius=0.06];
\draw[fill][blue] (2,4) circle [radius=0.06];
\draw[fill][blue] (2,5) circle [radius=0.06];
\draw[fill][blue] (2,6) circle [radius=0.06];
\draw[fill][blue] (1,6) circle [radius=0.06];
\draw[fill][blue] (7,0) circle [radius=0.06];
\draw[fill][blue] (7,1) circle [radius=0.06];
\draw[fill][blue] (7,2) circle [radius=0.06];
\draw[fill][blue] (7,3) circle [radius=0.06];
\draw[fill][blue] (7,4) circle [radius=0.06];
\draw[fill][blue] (7,5) circle [radius=0.06];
\draw[fill][blue] (7,6) circle [radius=0.06];
\draw[fill][blue] (-1,0) circle [radius=0.06];
\draw[fill][blue] (-1,1) circle [radius=0.06];
\draw[fill][blue] (-1,2) circle [radius=0.06];
\draw[fill][blue] (-1,3) circle [radius=0.06];
\draw[fill][blue] (-1,4) circle [radius=0.06];
\draw[fill][blue] (-1,5) circle [radius=0.06];
\draw[fill][blue] (-1,6) circle [radius=0.06];
\draw[fill][blue] (-2,0) circle [radius=0.06];
\draw[fill][blue] (-2,1) circle [radius=0.06];
\draw[fill][blue] (-2,2) circle [radius=0.06];
\draw[fill][blue] (-2,3) circle [radius=0.06];
\draw[fill][blue] (-2,4) circle [radius=0.06];
\draw[fill][blue] (-2,5) circle [radius=0.06];
\draw[fill][blue] (-2,6) circle [radius=0.06];
\draw[fill][blue] (-3,0) circle [radius=0.06];
\draw[fill][blue] (-3,1) circle [radius=0.06];
\draw[fill][blue] (-3,2) circle [radius=0.06];
\draw[fill][blue] (-3,3) circle [radius=0.06];
\draw[fill][blue] (-3,4) circle [radius=0.06];
\draw[fill][blue] (-3,5) circle [radius=0.06];
\draw[fill][blue] (-3,6) circle [radius=0.06];
\end{tikzpicture}
\end{figure}

2. Let $\ii$ be a subset of $\z\times  [0,+\infty)$ (see \S\ref{snotata}).\\
We will call it a {\it superstaircase} set if for any $(i_0,j_0)\in \ii$ we have  $S_{(i_0,j_0)}\subset \ii$.
 %and $i\ge i_0$, and $0\le j\le j+(i-i_0)$ the point $(i,j)$ belongs to $\ii$ as well.
 %it  is %equals %the union of the corners
 %the union of sets of the form $\bigcup \limits_{s\ge 0} \{(i,j):\ i+t_{s}>j\ge 0, i\geq n_{s}\}$ for some $n_{s}\geq 0, t_{s}\in \mathbb{Z}$.%\{(i,j):\ i>j\ge 0\} \cup \bigcup 

3. We will say that an object $N$ of $\dmer$ is {\it birational} if  $\obj \dmer\lan 1 \ra\perp N$.
\end{defi}

Let us make some observations related to superstaircase sets and slices. 

\begin{rema}\label{rss}
1. Obviously, any superstaircase is a staircase one.

Moreover, for if $(i,j)\in S_{(i_0,j_0)}$ then $S_{(i,j)}\subset S_{(i_0,j_0)}$. It clearly follows that a subset of $\z\times  [0,+\infty)$.
is superstaircase if (and only if) it can be presented as the union of    "sectors"  $S_{(i_l,j_l)}$ for some $(i_l,j_l)\in S_{(i_0,j_0)}$.

2. %We have to recall some facts about slices; see section 3 of \cite{kabir} (where the existence of certain adjoint functors is justified).\\
Consider the %inclusion functor
 embedding $\dmerb \lan 1 \ra \to \dmerb$; the composition $\nu^{\geq1}$ of this inclusion with its right adjoint can clearly be described as %$\nu^{\geq1}(-)=
 $\underline{Hom}(R\lan 1 \ra, -)\lan 1 \ra$; see Proposition 4.6.2 of \cite{kabir}. %Next, we %denote 
 %We will write $\nu^{0}$ for the composition of the localisation $\dmerb \to \dmr^0$ with its right adjoint. If an object $M$ of $\dmer$ belongs to the image of $\nu^{0}$ then we will say that it is {\it birational}. This is easily seen to be equivalent to $\obj \dmer\lan 1 \ra\perp M$; see Lemma 4.5.4 of \cite{kabir}. see Definition \ref{sstar}(3)
%We will say that an object in the image of $\nu^{0}$.

Moreover, for any object $M$ of $\dmer$  Proposition 4.6.2 of \cite{kabir} gives the following {\it slice filtration} triangle
\begin{equation}\label{etr} % \nu^{0}(M)[-1] \to
 \nu^{\geq1}(M) \to M \to \nu^{0}(M)\to \nu^{\geq1}(M)[1].  \end{equation}

Recall also that the object $\nu^{0}(M)$ is birational in the sense of Definition \ref{sstar}(3); see  Lemma 4.5.4 of ibid.
%see Remark \ref{rss}(2) and Proposition 4.6.2 of \cite{kabir}.
\end{rema}

Now we relate the Chow-weight homology %with 
 to higher motivic homology.

\begin{theo}\label{main}
Let $\ii=%\bigcup \limits_{s\ge 0} \{(i,j):\ i+t_{s}>j\ge 0, i\geq n_{s}\}
\cup S_{(t_{s},n_s)}$ for some $ t_{s}\in \mathbb{Z},\ n_{s}\ge 0$. Then the following assumptions on $M\in  \dmerpl$  are equivalent:

(a). $\chw_{j}^{i}(M_K)=\ns$ for all $(i,j)\in \ii$ and all function fields $K/k$;

(b). $\chowm_{r}(M_K, R, -c)=\ns$ for all these $K$, $0\leq r \leq n_{s}$, and $c \ge t_{s}$;

 (c) $\chowm_{r}(M_K, R, -c)=\ns$ for all these $K$ and $(r,c)\in \ii$.
%if and only if $\chowm_{r}(M_K, R, c)=\ns$ for all these $K$,  $0\leq r \leq n_{s}+ t_{s}-1$,  and $c \leq- n_{s}$. %; 0 \leq c \leq (max(t_{s}) - 1) or t_{s}-1-r ?$.
\end{theo}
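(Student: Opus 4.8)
The plan is to push everything through the weight spectral sequence attached to the Chow weight structure. Fixing a dimension $j$ and applying Proposition~\ref{pwss} to the homological functor $H=\chowm_{j}(-_K,R)=\dmrbl(R\lan j\ra,(-)_K)$ produces, for each $M$, a spectral sequence with $E_2^{pq}=\chw_{j}^{p}(M_K,R,-q)$ converging to $\chowm_{j}(M_K,R,-p-q)$. The first observation is that Proposition~\ref{pr1}(4) forces $E_2^{pq}=\ns$ whenever $q>0$ (the shift $-q$ is then negative) and, since $M\in\dmerpl$ lies in some $\dmer_{\wchow\ge -n}$, also whenever $p>n$; hence the sequence is confined to a quadrant, it converges (one checks that $H$ kills $\dmer_{\wchow\ge 1}$ by orthogonality, base change being weight-exact), and the abutment in total degree $c$ is assembled from the groups $\chw_{j}^{c+l}(M_K,R,l)$ with $l\ge 0$. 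Thus every contribution to $\chowm_{j}(M_K,R,-c)$ lies on the slope-one ray $\{(c+l,\,j+l):l\ge 0\}$ --- which is exactly why the steep (superstaircase) shape of $\ii$, rather than a general staircase, is the relevant hypothesis.

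For the implication (a)$\Rightarrow$(b),(c) I would feed in Theorem~\ref{tstairs}. Condition (a) is condition A of that theorem, so it is equivalent to the effectivity condition E: there is a $\chower$-complex $\tilde{t}M\cong t(M)$ with $\tilde M^{i}\in\chower\lan a_{\ii,i}\ra^{\widehat\oplus}$. Since each $\chw_{j}^{p}(M_K,R,-q)$ is a subquotient of $\chowm_{j}(\tilde M^{p}_K,R,-q)$, Proposition~\ref{pvan} makes this $E_2$-term vanish as soon as $a_{\ii,p}>j-q$, i.e.\ as soon as $(c+l,\,j+l)\in\ii$. The superstaircase property (Remark~\ref{rss}(1)) guarantees precisely that if the base point lies in the range prescribed by (b) (resp.\ (c)) then the entire slope-one ray $(c+l,j+l)$, $l\ge 0$, lies in $\ii$; hence all contributing $E_2$-terms vanish and the abutment $\chowm_{j}(M_K,R,-c)$ is zero. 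Equivalently, one may first upgrade (a) to the ``all shifts'' vanishing of Proposition~\ref{phcwh}(2) --- legitimate since a superstaircase set is a staircase one --- and then read off the same conclusion.

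The reverse implication is the main obstacle, and it is here that Voevodsky slices must enter. The spectral sequence alone does not suffice: vanishing of the abutment only kills $E_\infty^{p,0}$, a proper subquotient of the sought-after $\chw_{j}^{p}(M_K)=E_2^{p,0}$ (the $q=0$ row receives no differentials but emits them into the rows $q=1-r\le -1$, i.e.\ the shifts $l\ge 1$), so the obstruction to concluding $E_2^{p,0}=\ns$ is exactly the lower-row information. I would instead argue by induction on $j$ using the slice triangle~\eqref{etr} $\nu^{\ge1}(M)\to M\to\nu^{0}(M)$. Writing $\nu^{\ge1}(M)=N\lan 1\ra$ and using that $\nu^{0}(M)$ is birational (so that $R\lan j\ra[l]\perp\nu^{0}(M)$ for $j\ge1$), the Cancellation theorem yields $\chowm_{j}(M_K,R,l)\cong\chowm_{j-1}(N_K,R,l)$ for $j\ge 1$, and the corresponding statement $\chw_{j}^{i}(M_K)\cong\chw_{j-1}^{i}(N_K)$ reduces the dimension index by one. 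The base case $j=0$ is the birational case already accessible through Proposition~\ref{pr1}(3) together with Corollary~3.4.2 of \cite{bsoscwhn}.

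The two hard points of this reverse step are precisely the features flagged in the introduction. First, one must know that $\nu^{\ge1}=\underline{Hom}(R\lan1\ra,-)\lan1\ra$ is weight-exact for $\wchow$ and commutes with the formation of the weight complex and of the associated pure functors, so that the reduction $\chw_{j}^{i}(M)\rightsquigarrow\chw_{j-1}^{i}(N)$ is valid; establishing this compatibility is the technical crux. Second, since $\nu^{\ge1}$ does not preserve compactness, the motif $N$ is in general not geometric even when $M$ is --- which is the whole reason the theory has to live on $\dmerpl\subset\dmer$ rather than on the geometric subcategory. Finally, the index bookkeeping (matching the superstaircase set governing $M$ with the one governing $N$ under $j\mapsto j-1$, and tracking the mild transposition between the coordinates of $\ii$ and the pair $(r,c)$ appearing in (b) and (c)) delivers the equivalence (b)$\Leftrightarrow$(c) as a byproduct, once both have been shown equivalent to (a).
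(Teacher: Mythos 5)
Your forward direction (a)$\Rightarrow$(c) is essentially the paper's argument: the paper first upgrades (a) to the vanishing of the shifted groups $\chw^{u}_{j}(M_K,u-i)$ along the slope-one rays via Proposition \ref{phcwh} (legitimate since a superstaircase set is a staircase set), then feeds this into the convergent weight spectral sequence of Proposition \ref{pwss}; your variant through condition E of Theorem \ref{tstairs} and Proposition \ref{pvan} is a harmless repackaging, and your diagnosis of why the spectral sequence alone cannot yield the converse is accurate.

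The converse (b)$\Rightarrow$(a) --- the paper's Lemma \ref{hhom} --- has two genuine gaps as you sketch it. First, the claimed isomorphism $\chw_{j}^{i}(M_K)\cong\chw_{j-1}^{i}(N_K)$ for $j\ge 1$ does not follow from the birationality of $\nu^{0}(M)$: birationality kills the \emph{motivic} homology of $\nu^{0}(M)$ in positive twists, but Chow-weight homology is computed from the weight complex, and there is no formal reason for $\chw_{j}^{i}(\nu^{0}(M)_K)$ to vanish. This is exactly where the paper works: it applies the long exact sequence of the homological functors $\chw_j^i$ to the slice triangle, treats the $\nu^{\ge 1}$-term via $\chw_{j}^{i}(-_K)\circ\lan 1\ra\cong\chw_{j-1}^{i}(-_K)$ --- which needs only the weight-exactness of the twist $\lan 1\ra$ (Proposition \ref{pwchow}(\ref{iprest})); the weight-exactness of $\nu^{\ge1}$ that you flag as the crux is never used --- and kills the $\nu^{0}$-term by Lemma \ref{lem}(2): the $r=0$ part of the hypothesis puts $M$, hence $\nu^{0}(M)$, in $\dmer^{\thomr\le -1}$ (Proposition \ref{pmot}(\ref{ie3})), and a birational object of $\dmer^{\thomr\le -1}$ lies in $\dmer_{\wchow\ge 1}$, so its Chow-weight homology vanishes for $i\ge 0$ by Proposition \ref{pr1}(4). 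Second, your base case is too small: the induction bottoms out not at $j=0$ but at the whole diagonal sector $S_{(0,0)}=\{(i,j):\ 0\le j\le i\}$; when $n=0$ the hypothesis controls only twist-zero motivic homology, your slice reduction gives no motivic-homology control on $\underline{Hom}(R\lan 1\ra,M)$, and Corollary 3.4.2 of \cite{bsoscwhn} concerns geometric motives, whereas the lemma must hold for arbitrary objects of $\dmer$ (slices of a geometric motif need not be geometric, and $\nu^0(M)$ need not be $\wchow$-bounded below --- indeed the general case of Lemma \ref{lem}(1) requires a separate smashing-and-extension-closure argument). The paper instead deduces $M\in DM^{eff,\thomr\le -1}_{R}$ and invokes the generation of $DM^{eff,\thomr\le 0}_{R}$ by the objects $\obj\chower(a)[a+b]$, on which $\chw_{j}^{i}(-,l)$ vanishes for $i>j+l$ (Proposition \ref{pr1}(6)); without this homotopy $t$-structure input both your base case and the treatment of the birational slice collapse.
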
 %Some extension-closure????

\begin{proof}
Clearly, Condition (c) implies Condition (b). 

Next, assume that Condition (a) is fulfilled and $(i,j)\in \ii$. Since $\ii$ is a superstaircase set, $(u,j+u-i)\in \ii$ whenever $u\ge i$.
Hence %$\chw_{a}^{u}(M_K,R,q-a)=\ns$ whenever $(a,u+a-q)\in \ii$ 
 %$\chw_{j}^{i}(M_K,R,q-a)=
$\chw^u_j (M_K,{u-i})= \ns$ for any %$u\in \z$
 $u\ge i$   by   Proposition \ref{phcwh}; see conditions \ref{ir4} and \ref{ir2} in it.

%Assume that $M$ satisfies the corresponding Chow-weight homology vanishing assumptions.  
Next, the functor $H=\chowm_{j}(-_K,R,{u-i})$ kills $\dmer_{\wchow\ge 1}$ for all $u\in \z$ and vanishes if $u<i$  % and $\chw^a_b (M_K,{c})=\ns$ if $a,b\in \z$ and $c<0$
 %since the  base field change functor $\dmer\to \dmerb(K^{perf})$ is weight-exact
 by Proposition \ref{pr1}(4). Thus we have a converging Chow-weight spectral sequence %$T(H,M)$ 
$$T(H,M): E_2^{u,q}T(H,M)  =\chw_{j}^{u}(M_K,R,-q)\implies E_{\infty}^{u+q}=\chowm_{j}(M_K, R, -u-q)$$ that corresponds to $\wchow$; % and the homological functor $\chowm_{a}(-_K,R)$; 
 see Proposition \ref{pwss}.  Applying the aforementioned vanishing of the corresponding groups $\chw^u_j (M_K,{u-i})$ %implies 
 we obtain $\chowm_{j}(M_K, R, -i)=\ns$; thus  Condition (a) implies Condition (c). 

 %$\chw^a_b (M_K,{c}) $ %Since $\chw_{j}^{u}(M_K,R,-q)=\ns$ for??? see?? %????(1). 

It remains to verify that (b) implies (a). Since $\ii=\cup S_{(t_{s},n_s)}$, it suffices to prove that this implication is valid for $\ii= S_{(t,n)}$, where $ t\in \mathbb{Z}$ and $ n\ge 0$. Moreover, we can clearly assume that $t=0$. Hence it remains to prove the following lemma.

\begin{lem}\label{hhom}
Assume that $n\ge 0$, $M\in \obj \dmer$, %??{}_{\wchow+}$, %not necessary????!!!
 and $\chowm_{r}(M_K, R, -c)=\ns$  if $c\ge 0$, $0 \le r\le n$, and $K/k$ is a function field. 

Then $\chw_{j}^{i}(M_K)=\ns$ for all $(i,j)\in S_{(0,n)}$ (and all function fields $K/k$). 
%Then $\chowm_{r}(M_K, R, -c)=\ns$ (for these $c,r,K$) as well.

%$M\in \obj \dmerb$ be fixed, $\ii$  our theorem (see the figure in \S\ref{sfig} below for the case $ \{i>j\ge 0\}\bigcup \{i+1>j\ge 0, i \geq 1\}$). %\ref{Figure 2} 
%2 %below). \\  at the end of the paper). 
% If $\chowm_{r}(M_K,R,c)=\ns$ for all $0\leq r \leq n_{s}+ t_{s}-1, c \leq -n_{s}$, and all function fields $K/k$, then $\chw_{j}^{i}(M_K)=\ns$ for all $(i,j)\in \ii$ and all function fields $K/k$.
\end{lem}

\begin{proof} Let us prove the statement by induction on $n\ge 0$.

In the case $n=0$ the statement is a simple generalization of Corollary 3.4.2 of \cite{bsoscwhn}; we essentially repeat the argument here. By %Theorem 3.3.1 of \cite{bondegl} %(cf. also  Corollary 4.18 of \cite{3}), 
 %(see also Definition 3.2.3, \S1.3.8, and the formula  (1.3.7.c) of ibid.; cf. \S3.1 of \cite{degmod}) %below??????!
 %Proposition 2.3.3(3) of  \cite{bsoscwhn}
 Proposition \ref{pmot}(\ref{ie3}) below, our assumption implies that $M$ belongs to $ DM^{eff,\thomr \le -1}_{R}$ (see Definition \ref{dcwh}(4)).
Thus it remains to apply Proposition \ref{pr1}(6). % (note that  the homotopy $t$-structure $\thomr$ is recalled there as well). %the definition of . 

Now we assume that the statement in question is valid if $n\le m$ for some $m\ge 0$. We should verify it for $n=m+1$. Since we have just proved %the statement in question 
 it in the case $n=0$, it suffices to verify that $\chw_{j}^{i}(M_K)=\ns$  for  $(i,j-1)\in S_{(0,n-1)}$. %???

%consider 

 %Taking the corresponding 
We take the slice filtration distinguished triangle (\ref{etr}) and denote  $\nu^{0}(M)$ and $\nu^{\geq1}(M)$ by $M^0$ and $M^1$, respectively.  Then for any $i\in \z$, $j\ge 0$, and function field $K$ %the long exact sequence for Chow-weight homology over a function field $K$ coming from
 (\ref{etr}) gives a long exact-sequence  $\dots \to \chw_{j}^{i}(M^1_K)\to \chw_{j}^{i}(M_K)\to \chw_{j}^{i}(M^0_K)\to \dots$ for  Chow-weight homology. %we obtain that 
 Thus it suffices to verify that
$\chw_{j}^{i}(M^0_K)=\chw_{j}^{i}(M^1_K)=\ns$ %for all $(i,j)\in S_{(0,n)}$
 whenever  $(i,j-1)\in S_{(0,n-1)}$. %, where $M^0= \nu^{0}(M)$ and $M^1= \nu^{\geq1}(M)$. 

Now, it is easily seen that for any $i\in \z$ and $j\ge 1$ we have  $\chowm_{j}(M^1_K, R, -i)=\chowm_{j}(M_K, R, -i)$ and  $\chowm_{j}(M^0_K, R, -i)=\ns$ (cf. %Definition 3.2.3 and?? Theorem 3.3.1 
 the aforementioned statements from   \cite{kabir} once again). Thus for $M^2=\underline{Hom}(R\lan 1 \ra, M)$ (see  Remark \ref{rss}(2))  we have $\chowm_{j-1}(M^2_K, R, -i)=\ns$ whenever  $i\ge 0$ and  $j-1\le n-1$. Applying the inductive assumption we obtain 
$\chw_{j-1}^{i}(M^2_K)=\ns$ if $(i,j-1)\in S_{(0,n-1)}$. Now, Proposition \ref{pwt}(\ref{iwcpu}) easily implies that $\chw_{j}^{i}(-_K)\circ \lan 1\ra\cong \chw_{j-1}^{i}(-_K)$; hence  $\chw_{j}^{i}(M^1_K)=\ns$ if  $(i,j-1)\in S_{(0,n-1)}$ (recall that $M^1\cong M^2\lan 1\ra$).  Moreover,  $M^1$ belongs to $ \dmer^{\thomr\le -1}$ since $M$ does; see Corollary 3.3.7(2) (along with Theorem 3.3.1) of \cite{bondegl} or Proposition \ref{pmot}(\ref{ie3}) below.  %combining the long exact sequence for the functors  $\chowm_0{(-_K)}$
%Hence the statement follows from .
 It easily follows that $M^0$ belongs to $ \dmer^{\thomr\le -1}$ as well. Since $M^0$ is birational, it belongs to  $\dmer_{\wchow\ge 1}$ by Lemma \ref{lem}(2) below. Thus $\chw_{j}^{i}(M^0_K)=\ns$ whenever $i\ge 0$ (see Proposition \ref{pr1}(4)) and we can conclude the proof. 
\end{proof}

\begin{lem}\label{lem} %Assume that $N$ is  birational (see  Definition \ref{sstar}(3)) and belongs to $\dmer^{\thomr\le 0}$. Then $M$ belongs to  $\dmer_{\wchow\ge 0}$.
 Let $N\in \dmer^{\thomr\le 0}$.

1. Then for any $j\ge 0$ there exists a choice of $w_{\chow\le -j}N$ that belongs to $\obj \dmer\lan j\ra$.

2. Assume in addition that $N$ is  birational in the sense of   Definition \ref{sstar}(3). Then $M\in \dmer_{\wchow\ge 0}$.
\end{lem}

\begin{proof} %Let us prove that there exists a choice of $\wchow\ge 0}$
1. If $N\in  \dmerpl$ then the statement is an easy combination of the case $n=0$ of Lemma \ref{hhom} (note that this case of that lemma does not depend on our one) with  Theorem \ref{tstairs}(\ref{itstairs2}) (see conditions A and C in it).

The argument for the general case is similar to the proof of \cite[Proposition 2.3.2(10)]{bwcomp}. Let us fix $j\ge 0$.  Since the weight structure $\wchow$ is smashing, the class $C$ of those $M\in \obj \dmer$ such that there exists  $w_{\chow\le -j}M$ that belongs to $\obj \dmer\lan j\ra$ is smashing (see Definition \ref{desmash}(\ref{idsmash}))  in $\dmer$; see Proposition 2.3.2(3) of loc. cit. Moreover, %this class
 $C$ is extension-closed by Proposition 1.2.4(12) ibid. Recalling Proposition \ref{pr1}(6) once again we obtain that is suffices to verify that there exists $w_{\chow\le -j}M$ whenever $M\in \obj \chower \lan a\ra [b-a]$ for $a,b\ge 0$, and this is obvious.

2. %We take %the corresponding
 According to the previous assertion, there exists a  $-1$-weight decomposition triangle $w_{\chow\le -1}N\stackrel{a}{\to} N\to w_{\ge 0}N$ (see Remark \ref{rstws}(2)) such that  $w_{\le -1}N\in \obj \dmer\lan 1\ra$. Since $N$ is birational,  $w_{\le -1}N\perp N$ (see Remark \ref{rss}(2)). Hence $N$ is a retract of $w_{\ge 0}N\in \dmer_{\wchow\ge 0}$; thus $N$ belongs to $ \dmer_{\wchow\ge 0}$ itself indeed.
\end{proof}

Thus, we %obtain the statement of 
 have completed the proof of  Theorem \ref{main}.
\end{proof}

\begin{rema}\label{comt}

1. Below we will mostly apply Theorem \ref{main} to geometric motives. Note however that our argument relies on slices; thus one cannot "apply it inside $\dmger$" (see \cite{ayapp}).

Moreover, the proofs of Theorems \ref{tstairs} and \ref{main} hint that it can make sense to study conditions of these theorems for subcategories of $\dmer$ that are bigger than $\dmerpl$. In particular, one may treat the Voevodsky category $\dmerm\supset \dmerpl$; cf.  \S2.3 of \cite{binters}.

2. %Our arguments easily yield that for any %super???!!
Since the slice functors are exact and respect coproducts, for any staircase set $\ii$ an object $M$ of $\dmer$ %??_{\wchow+}$% and any superstaircase set $\ii$ we have $M\in
 belongs to the class $ D_{\ii}$ (see condition D in Theorem \ref{tstairs}(\ref{itstairs2})) if and only if  $\nu^{0}(M)$ and $\nu^{\geq1}(M)$ do. Moreover, similar implications hold for other "slices" of $M$. We leave the detail for these statements to the reader.

3. Obviously, there are plenty of staircase subsets of $\z\times [0,+\infty)$ that are not superstaircase ones. However, the only "concrete" staircase of this sort that were considered in \cite{bsoscwhn} are sets of the form $\z\times [0,c-1]$ (for $c>0$). They correspond to the $c$-effectivity of motives; cf. Remark 3.3.2(2) of ibid. %One can avoid CWH??

4. Now we demonstrate that Theorem \ref{main} %our statements may fail  if $\ii$ is not a staircase set. 
 does not extend to the case where $\ii$ is an arbitrary subset of $\z\times [0,+\infty)$. Let $R=\q$ and $M=\mathbb{Q}\lan 1\ra[-1]$.
Then $\chw_{j}^{i}(M)=\ns$ for $(i,j)\neq (1,1)$. 
 
Next, assume that $k$ is not a union of finite fields, and $\ii=[0,+\infty)\times [0,+\infty)\setminus \{(1,1)\}$. %  $K$ is an infinite extension of $k$, and  %\{(i,j):\ i>j\ge 0\} \bigcup \{(i,j):\ i+2>j\ge 2\}$. Consider the shifted Tate motif 
  %??????? %K=??
Then $\chw_{0}^{0}(N, \mathbb{Q})=k^{\times}\otimes \q\neq\ns$;  yet $(0,0)\in \ii$. % for all $(i,j) \in \ii$; yet clearly $\chowm_{1}(N,-1)\neq 0$.

5. Assume  $M\in  \dmerpl$, $t\ge 0$, %. Then  and that for
 and for any $t\ge 0$ the $E_{2}^{*,*}-$terms of the Chow-weight spectral sequence $T_{\wchow}(H,M)$ for the homological functor $\dmerb(R\lan t \ra,-)$ are concentrated in the first quadrant (in particular, this is the case if  $M\in \dmer_{\wchow\le 0}$). % ; cf. also Remark 5.4.1(6) of \cite{bsoscwho}).
 Thus  we have the so-called five-term exact sequence: 
$$ 0 \to \chw_{t}^{1}(M) \to \chowm_{t}(M,-1) \to \chw_{t}^{0}(M,-1) \to \chw_{t}^{2}(M) \to \chowm_{t}(M,-2).$$
%$M\in \dmer_{\wchow\le 0}$. %Consider 
% Then we have the so-called five-term exact sequence, coming from the Chow-weight spectral sequence $T_{\wchow}(H,M)$ for the homological functor $\dmerb(R\lan t \ra,-)$:
%$$ 0 \to \chw_{t}^{1}(M) \to \chowm_{t}(M,-1) \to \chw_{t}^{0}(M,-1) \to \chw_{t}^{2}(M) \to \chowm_{t}(M,-2).$$
%Thus % we could 
  Clearly one may obtain some homology vanishing statements %conditions for  of (low-term) homologies 
	from this sequence.%\\
\end{rema}

\section{Applications to geometric motives}%and supplements}
\label{sappl}

In this section we %combine Theorem \ref{main} with some results of  \cite{bsoscwhn}; recall that there geometric motives were treated only.
apply  Theorem \ref{main} to obtain some new statements on geometric motives. We argue similarly to \cite{bsoscwhn}.

In \S\ref{sexp} we combine our Theorem \ref{main} with the results of  \cite[\S3.6]{bsoscwhn}; this roughly gives the finiteness of exponents of higher Chow-weight homology and lower motivic homology groups provided that they are torsion.

In \S\ref{samgc} we apply our results to the motif with compact support of a variety $X$. It follows that if certain Chow homology groups of $X$ (over a universal domain $K$ containing $k$) are torsion then they are of finite exponent, and also %gives 
 estimates the effectivity of the corresponding Deligne weight factors of singular and \'etale cohomology. 

\subsection{%Applications to geometric motives
On Chow-weight and motivic homology of bounded exponent}\label{sexp}

Now we apply our results to geometric motives.

\begin{theo}\label{trs}
Let $M \in Obj DM^{eff}_{gm, \mathbb{Z}[\frac{1}{p}]}$, $K$ be a universal domain (that is, $K$ is an algebraically closed field that is of infinite transcendence degree over its prime subfield) containing $k$, and $\ii=\cup S_{(t_{s},n_s)}$ %is a superstaircase set
  (see Definition \ref{sstar}(1)).

The following conditions are equivalent. % (see Definition \ref{sstar} for the notation).

\begin{enumerate}
\item\label{trhc0} 
%$\chowm_{r}(M_K, c, \mathbb{Q})=\ns$ for all $0\leq r \leq n_{s}+ t_{s}-1, c \geq -n_{s}$.
$\chowm_{r}((M\otimes \q)_K, \q, -c)=\ns$ for  $0\leq r \leq n_{s}$ and $c \ge n_{s}$; here $M\otimes \q$ is the result of the application to $M$ of the extension of scalars functor $-\otimes \q=-\otimes_{\zop} \q:\dmgep\to \dmgeq$ provided by Proposition 3.6.2(I.1) of \cite{bsoscwhn}.
%\item\label{trhc1} $\chowm_{r}(M_{k'}, c, \mathbb{Q})=\ns$ for all $0\leq r \leq n_{s}+ t_{s}-1, c \geq -n_{s}$, and all field extensions $k'/k$.
\item\label{trhc2}
There exists $E_{M}>0$ such that $E_{M}\chowm_{r}(M_{k'}, -c, \mathbb{Z}[\frac{1}{p}])=\ns$ for any $(r,c)\in \ii$ and any field extension $k'/k$.
%for all $0\leq r \leq n_{s}+ t_{s}-1, c \geq -n_{s}$.

\item\label{trhc3}  $\chw_{j}^{i}((M\otimes \q)_K,\mathbb{Q})=\ns$ whenever $(i,j)\in \ii$. %$a\in \mathbb{Z}$, $i+t_{s}>j$, and all field extensions $k'/k$.

\item\label{trhc4}
 There exists $E'_{M}>0$ such that $E'_{M}\chw_{j-a}^{i}(M_{k'},a)=\ns$ for all $a\in \mathbb{Z}$, %$i+t_{s}>j$
$(i,j)\in \ii$, and all field extensions $k'/k$. %Global constant?????????????????????
\end{enumerate}
\end{theo}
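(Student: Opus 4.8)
The plan is to prove the cyclic chain of implications \ref{trhc0} $\Rightarrow$ \ref{trhc3} $\Rightarrow$ \ref{trhc4} $\Rightarrow$ \ref{trhc2} $\Rightarrow$ \ref{trhc0}. The two ``rational'' conditions \ref{trhc0} and \ref{trhc3} will be handled by Theorem \ref{main} applied with $R=\q$, whereas the two ``bounded exponent'' conditions \ref{trhc2} and \ref{trhc4} will be linked to the rational ones by means of the torsion-control results of \cite[\S3.6]{bsoscwhn}. I first note that $M$ and $M\otimes\q$ are geometric, hence $\wchow$-bounded; thus both lie in $\dmerpl$ and Theorem \ref{main} applies to $N=M\otimes\q\in\obj\dmgeq$.

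For the rational equivalence I would apply Theorem \ref{main} to $N=M\otimes\q$: its conditions (a) and (b) are exactly the ``all function field'' forms of \ref{trhc3} and \ref{trhc0} respectively. To match the present statement, in which the field is the single universal domain $K$, I would use the standard reduction that for a fixed geometric motive with $\q$-coefficients the vanishing of $\chw^{i}_{j}(N_{K})$ (resp. of $\chowm_{r}(N_{K},\q,-c)$) over the universal domain $K$ is equivalent to the corresponding vanishing over all function fields $K'/k$; one direction is the passage to the filtered colimit $K=\varinjlim K'$ (legitimate since $N$ is compact), and the other uses the rational specialization/transfer arguments of \cite{bsoscwhn}. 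This yields \ref{trhc0}$\Leftrightarrow$\ref{trhc3}. The easy implication \ref{trhc2}$\Rightarrow$\ref{trhc0} is then immediate: taking $k'=K$, the group $\chowm_{r}(M_{K},-c,\zop)$ is killed by $E_{M}$, hence torsion, so its rationalization vanishes; and since $M$ is compact one has $\chowm_{r}(M_{K},-c,\zop)\otimes\q\cong\chowm_{r}((M\otimes\q)_{K},\q,-c)$.

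The substantial step is \ref{trhc3}$\Rightarrow$\ref{trhc4}. By Proposition \ref{phcwh} (the equivalence of its conditions \ref{ir2} and \ref{ir4} for superstaircase $\ii$) it suffices, up to a uniform exponent, to bound the exponent of the untwisted groups $\chw^{i}_{j}(M_{k'})$ with $\zop$-coefficients for $(i,j)\in\ii$ and all $k'$. Condition \ref{trhc3}, via the rational equivalence just established, tells us precisely that these groups are torsion. To convert ``torsion'' into ``bounded exponent'' I would invoke the results of \cite[\S3.6]{bsoscwhn}: here geometricity is essential, since the weight complex $t(M)$ is then a bounded complex of honest Chow motives, so that only finitely many of its terms---and, after comparison with finite-coefficient (\'etale) realizations over the universal domain, only finitely many primes---contribute to the torsion, which therefore has a uniformly bounded exponent $E'_{M}$ independent of $k'$. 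This torsion-to-bounded-exponent passage is the main obstacle of the proof, and is exactly the point at which the universal domain hypothesis is used in an essential way.

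Finally, for \ref{trhc4}$\Rightarrow$\ref{trhc2} I would run the Chow-weight spectral sequence of Proposition \ref{pwss} for the homological functor $H=\chowm_{r}(-_{k'},\zop,-c)$: its $E_{2}$-page consists of the Chow-weight homology groups occurring in \ref{trhc4}, each annihilated by $E'_{M}$, and it converges to $\chowm_{r}(M_{k'},-c,\zop)$. Because $M$ is $\wchow$-bounded, only finitely many $E_{2}$-terms contribute to each abutment group, so the latter is killed by a fixed power of $E'_{M}$; choosing $E_{M}$ to be this power (uniformly in $k'$) gives \ref{trhc2} and closes the cycle. I expect the only delicate points beyond the cited inputs to be the bookkeeping of the twist index $a$ in \ref{trhc4} (handled by Proposition \ref{phcwh}) and the verification that all bounds produced are genuinely independent of the extension $k'/k$.
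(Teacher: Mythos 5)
Your proposal is correct and essentially reproduces the paper's proof: the equivalence of conditions \ref{trhc0} and \ref{trhc3} via Theorem \ref{main} plus the universal-domain reduction (the paper cites Proposition 2.3.4(II) of \cite{bsoscwhn} for exactly the colimit/specialization step you sketch), the torsion-to-bounded-exponent passage delegated to \S3.6 of \cite{bsoscwhn} (the paper invokes Theorem 3.6.4(I,II), condition II.B, which also absorbs the bookkeeping of the twist index $a$), and \ref{trhc4}$\Rightarrow$\ref{trhc2} via the Chow-weight spectral sequence combined with the boundedness of $t^{st}(M)$ to get a filtration of uniformly bounded length, exactly as in the paper's argument modeled on Corollary 3.6.5(II) of ibid. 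The only slight drift is that Proposition \ref{phcwh} is a pure vanishing statement, so the ``up to a uniform exponent'' reduction you attribute to it is really part of Theorem 3.6.4(II) of \cite{bsoscwhn}, which you in any case invoke; the cyclic organization of the implications versus the paper's separate equivalences is immaterial.
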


\begin{proof}
%Lastly, c
Condition \ref{trhc2} obviously implies condition \ref{trhc0}.

Next  Proposition 2.3.4(II) of \cite{bsoscwhn} implies that condition \ref{trhc0} (resp. \ref{trhc3}) is fulfilled if and only if we have similar vanishing over any field extension $k'/k$. Hence these conditions are equivalent according to Theorem \ref{main} applied to the motif $M\otimes\q\in \obj \dmeq$.

Moreover, conditions \ref{trhc3} and \ref{trhc4} are equivalent according to Theorem 3.6.4(I,II) of ibid. (see condition II.B in it).

Lastly we argue similarly to the proof of Corollary 3.6.5(II) of ibid. As we have already noted in the proof of Theorem \ref{main}, we have a convergent Chow-weight spectral sequence $T(H,M)$ as follows: $$ E_2^{u,q}T(H,M)  =\chw_{j}^{u}(M_K,\zop,-q)\implies E_{\infty}^{u+q}=\chowm_{j}(M_K, \zop, -u-q).$$ %Since $M$ is $\wchow$-bounded, 
 Next, the complex $t^{st}(M)$ is isomorphic to a bounded one (see Definition 3.1.1(1) and Proposition 2.2.1(1) of \cite{bsoscwhn}); hence  the simple %indices??? 
 index computation made above yields that for $(r,c)\in \ii$ the group $\chowm_{r}(M_{k'}, -c, \mathbb{Z}[\frac{1}{p}])$ possesses a filtration of a uniformly bounded length whose factors are killed by the multiplication by $E'_{M}$. Thus we can take $E_M$ to be a high enough power of $E'_{M}$.
 %$E_2^{pq}(T(M,k'))=\chw^p_{0}(M_{k'},-q,\zop)$
%a simple spectral sequence argument demonstrates that condition \ref{trhc4} implies condition \ref{trhc2}; see the proof of Corollary 3.6.5 of ibid. or
 %The converse implication can be proved by %Our conditions \ref{trhc0} ,\ \ref{trhc1} , and  \ref{trhc3} are equivalent by Theorem \ref{main}. The remaining implications follow from Proposition 3.6.5 and Corollary 3.6.6 of \cite{bsoscwhn}.
\end{proof}

\begin{rema}\label{rweirdtorsion}
It is  quite remarkable %that we can %can prove the boundedness of 
%bound exponents of 
 that certain Chow-weight homology %groups have
 has finite exponents. %since a priori nothing prevents the 
Note that (in general) Chow-weight homology groups and %well as %the more popular motivic homology ones that we also treat above) 
motivic homology of geometric motives can   have really "weird" torsion.
\end{rema}

\subsection{%Chow-weight homology for
Applications to motives with compact support}\label{samgc}

%In this section, we apply 
Let us apply our results to motives with compact support. Let us recall some basics on these motives.
%For $R$-linear coefficients? Just state the corresponding version of Theorem 4.2.1???

\begin{pr}\label{pmgc}

1. There exists a functor $\mgcr$ of the motif with compact support from the category $\schpr$ of $k$-varieties with morphisms being proper ones into $\dmger$. %that is  provided by  \S4.1 of \cite{1} along with \S5.3 of \cite{kellyast}, satisfies the following properties. 

%\item\label{imchow}
2. For any $j,l\in \z$, $X\in \var$, $M=\mgcr(X)$, and any field extension $k'/k$ the group $\chowm_j(M_{k'},R,l)$ is
naturally isomorphic to the higher Chow group $\chowgri_{j}(X_k',l,R)$  (cf. Theorem 5.3.14 of \cite{kellyast} for the $R=\zop$-version of this notation). 
%\end{enumerate}
\end{pr}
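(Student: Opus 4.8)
The plan is to assemble the construction from Voevodsky's theory of motives with compact support and then to identify the relevant $\hom$-groups with higher Chow groups via the comparison of Borel--Moore motivic homology with Bloch's higher Chow groups, the main care being the behaviour of coefficients and of base change to perfect closures in characteristic $p$.

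For part 1, I would define $\mgcr(X)$, for a $k$-variety $X$, as the extension of scalars along $\zop\to R$ of Voevodsky's $\zop$-linear motif with compact support, i.e.\ of the Suslin complex $C_*(z_{equi}(X,0))$ of the presheaf of relative cycles that are equidimensional of relative dimension $0$ over the base. That this object lies in $\dmger$ is part of the formalism of \cite[\S5.3]{kellyast} (here inverting $p$ is what makes the theory of relative cycles well behaved in characteristic $p$); in characteristic $0$ one may instead cite \cite{1}. For a proper morphism $f\colon X\to Y$, proper pushforward of relative cycles yields a morphism of presheaves $z_{equi}(X,0)\to z_{equi}(Y,0)$, hence a morphism $\mgcr(X)\to\mgcr(Y)$; functoriality and compatibility with compositions follow from the corresponding properties of proper pushforward of cycles. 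This gives the functor $\mgcr\colon\schpr\to\dmger$.

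For part 2, unwinding Definition \ref{dchowm} gives
\[
\chowm_j(M_{k'},R,l)=\dmrb(k'\perf)\bigl(R\lan j\ra[l],\,\mgcr(X)_{k'}\bigr),
\]
where $\mgcr(X)_{k'}$ is the image of $\mgcr(X)$ under the base-change functor $\dmer\to\dmer(k'\perf)$. The first step is the base-change identification $\mgcr(X)_{k'}\cong\mgcr(X_{k'\perf})$, which is the relative-cycle (proper) base-change isomorphism valid with $\zop$-coefficients; see \cite{kellyast}, \cite{cd}, \cite{cdint} and Proposition \ref{pmot}(\ref{ie1}). Since $R\lan j\ra[l]=R(j)[2j+l]$, the right-hand side is by definition an $R$-linear Borel--Moore motivic homology group of $X_{k'\perf}$, and the comparison theorem of \cite[Theorem 5.3.14]{kellyast}---whose argument applies, upon passing to the $R$-linearized cycle complex, with $R$ in place of $\zop$---identifies it naturally with $\chowgri_j(X_{k'\perf},l,R)$.

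It then remains to descend from $k'\perf$ to $k'$. As $k'\perf/k'$ is purely inseparable, pullback followed by pushforward of cycles is multiplication by a power of $p$, which is invertible in $R$; hence $\chowgri_j(X_{k'\perf},l,R)\cong\chowgri_j(X_{k'},l,R)$, naturally in $X$ and in $k'$. Composing the three isomorphisms yields the assertion. The main obstacle is not any single comparison---each is essentially known---but rather checking that \emph{all} of them (base change for $\mgcr$, the Borel--Moore/higher-Chow comparison, and purely inseparable invariance) hold compatibly over $R$ and survive passage to the perfect closure; this hinges uniformly on inverting $p$ in the relative-cycle formalism, and the remaining naturality verifications are then routine.
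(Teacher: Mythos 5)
Your proposal is correct and follows essentially the same route as the paper: the $\zop$-linear statements (existence, proper functoriality, and the comparison of Theorem 5.3.14) are taken from \S5.3 of \cite{kellyast}, scalars are extended to $R$ as in Proposition 1.3.3 of \cite{bokum}, and the base field change compatibility $\mgcr(X)_{k'}\cong \mgcr(X_{k'{}\perf})$ is exactly Proposition \ref{pmot}(\ref{ie2}) --- note only that you cite item (\ref{ie1}), which concerns $\mgr$, whereas the statement for $\mgcr$ is item (\ref{ie2}), and that it is proved there via the six-functor identity $\mgrc(X)\cong x_*x^!R$ together with continuity and regularity (Popescu), not by a ``relative-cycle proper base-change'', since $\spe k'{}\perf\to \spe k$ is neither proper nor of finite type. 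Your explicit purely inseparable descent step for higher Chow groups (using that $p$ is invertible in $R$) makes precise a point the paper leaves implicit in its identification of $\dmer(k')$ with $\dmer(k'{}\perf)$, and is handled correctly.
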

\begin{proof}
These statements easily follow from their  $\zop$-linear versions provided by %(and of) 
 \S5.3 of \cite{kellyast} along with (Proposition 1.3.3 of \cite{bokum} and) % Proposition 4.1.8(1) of \cite{bsoscwhn} 
Proposition \ref{pmot}(\ref{ie2}) below; cf. Proposition 4.1.8(1) of \cite{bsoscwhn}. 
\end{proof}

\begin{rema}\label{rmgq}
 %Actually, 
 Recall that actually the functor $\mgr$ is defined on the category of all $k$-varieties, and we have $\mgr(X)=\mgcr(X)$ whenever $X$ is proper (see Proposition 5.3.5 of \cite{kellyast}).  In particular, $\mgr(X)=\mgcr(X)$  if $X$ is smooth projective.
\end{rema}

Now we combine Theorem \ref{main}  with certain results of \cite{bsoscwhn} to obtain an extension of Theorem 4.2.1 of ibid.  Note  that  this statement does not mention Chow-weight homology.

\begin{theo}\label{tmgc}
Assume that  $X \in \var$, $K$ is a universal domain containing $k$, and for some set of %$ t_{s}\in \mathbb{Z},\ n_{s}\ge 0$
 $\{(t_s,n_s)\}\subset \z\times [0,+\infty)$ we have $\chowgri_{r}(X_{K},-c, \mathbb{Q})=\ns$ %if 
  whenever there exists $s$ such that $0\leq r \leq n_{s}$ and $c \ge t_{s}$. 

 1. Then %the following statements are valid.
%\begin{enumerate}
%\item\label{obv} There 
 there exists $E>0$ such that $E\chowgri_{r}(X_{k'},-c, \zop)=\ns$ for all $(r,c) \in \ii$ and any field extension $k'/k$, where $\ii=\cup S_{(t_{s},n_s)}$ (see Definition \ref{sstar}(1)). %  S_{(0,n)}$.

2. If $k$ is a subfield of $\com$  and $l,m\in \z$ then the $m+l$-th (Deligne) weight factor of $H^{m}_c(X_{\com})$ of the ($\q$-linear) singular cohomology of $X_{\com}$ with compact support   is $a_{\ii,l}$-effective %(see Definition \ref{dreasi}(1))
  as a pure Hodge structure; see Definition \ref{dreasi}(1) above %, and Definition 3.5.3 and Theorem 3.5.4(2) of 
	and \cite[Definition 3.5.3, Theorem 3.5.4(2)]{bsoscwhn} for the corresponding definitions. 

Moreover, the same effectivity properties hold for Deligne weight factors of $\ql$-\'etale cohomology $H^{m}_c(X_{k^{alg}})$ %is fulfilled if $k$ is an
 if $k$ is the perfect closure of a field that is of finite transcendence degree over its prime subfield.
%\end{enumerate}
\end{theo}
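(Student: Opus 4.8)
The plan is to apply Theorem \ref{trs} to the geometric motif with compact support of $X$ and then feed the resulting Chow-weight homology vanishing into the realization machinery of \cite{bsoscwhn}. First I would put $M=\mgcr(X)$ with $R=\zop$; by Proposition \ref{pmgc}(1) this lies in $\dmgep$, so Theorem \ref{trs} applies to it, and $M\otimes\q\cong\mgcq(X)$. By Proposition \ref{pmgc}(2) there are natural isomorphisms $\chowm_r(M_{k'},R,-c)\cong\chowgri_r(X_{k'},-c,R)$ for every field extension $k'/k$ and $R\in\{\zop,\q\}$. Under these identifications the hypothesis $\chowgri_r(X_K,-c,\q)=\ns$ (for $0\le r\le n_s$ and $c\ge t_s$) is precisely Condition \ref{trhc0} of Theorem \ref{trs}.

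Assertion 1 is then immediate: the implication \ref{trhc0}$\Rightarrow$\ref{trhc2} of Theorem \ref{trs} provides $E_M>0$ with $E_M\chowm_r(M_{k'},-c,\zop)=\ns$ for all $(r,c)\in\ii$ and all $k'/k$. Translating back through Proposition \ref{pmgc}(2) and setting $E=E_M$ gives $E\chowgri_r(X_{k'},-c,\zop)=\ns$, as required.

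For assertion 2 I would instead use the equivalence \ref{trhc0}$\Leftrightarrow$\ref{trhc3} of Theorem \ref{trs}, which yields $\chw_j^i((M\otimes\q)_K,\q)=\ns$ for all $(i,j)\in\ii$; here $M\otimes\q$ is a bounded, hence $\wchow$-bounded below, geometric motif, so it lies in $\dmerpl$ for $R=\q$. By Theorem \ref{tstairs}(\ref{itstairs2}) (Condition E) this vanishing is equivalent to the existence of a weight complex $\tilde t(M\otimes\q)\cong t(M\otimes\q)$ whose $i$-th term lies in $\choweq\lan a_{\ii,i}\ra^{\widehat\oplus}$, i.e. is $a_{\ii,i}$-effective. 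It then remains to pass to the Hodge (resp. $\ell$-adic \'etale) realization: this functor is weight-exact and carries the Chow weight structure to the Deligne weight filtration on $H^m_c$, so the effectivity of the terms of $\tilde t(M\otimes\q)$ propagates to $a_{\ii,l}$-effectivity of the weight-$(m+l)$ graded piece of $H^m_c(X_\com)$ (resp. of $H^m_c(X_{k^{alg}})$). This propagation, together with the precise matching of the index $l$ with the effectivity bound $a_{\ii,l}$, is exactly the content of \cite[Theorem 3.5.4(2)]{bsoscwhn} (with the effectivity notion of \cite[Definition 3.5.3]{bsoscwhn}); I would cite it to conclude, arguing as in the proof of Theorem 4.2.1 of ibid. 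For the \'etale assertion the hypothesis that $k$ be the perfect closure of a field of finite transcendence degree over its prime subfield is what guarantees that the Galois weight filtration on $H^m_c(X_{k^{alg}})$ is well-behaved, again exactly as in loc. cit.

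The main obstacle is the final step of assertion 2: checking that the weight-exact realizations identify the effectivity filtration on Chow motives (controlled by the Tate twists $\lan\,\cdot\,\ra$) with the effectivity filtration on pure Hodge structures and Galois representations, and that the weight-complex degree governing the weight-$(m+l)$ factor of $H^m_c$ produces exactly the bound $a_{\ii,l}$. However this compatibility is already established in \cite{bsoscwhn} for geometric motives, and $M\otimes\q$ is geometric; thus the genuinely new ingredient is Theorem \ref{main} (packaged inside Theorem \ref{trs}), which upgrades the motivic-homology (Chow-group) hypothesis to Chow-weight homology vanishing over the entire superstaircase range $\ii$, whereas \cite{bsoscwhn} could only handle ranges of the form $\z\times[0,c-1]$.
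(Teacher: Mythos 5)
Your proposal is correct and follows essentially the same route as the paper: part 1 is exactly the combination of Proposition \ref{pmgc}(2) with the implication \ref{trhc0}$\Rightarrow$\ref{trhc2} of Theorem \ref{trs}, and part 2 likewise extracts the vanishing $\chw_j^i(\mgcq(X)_K)=\ns$ for $(i,j)\in\ii$ from Theorem \ref{trs} (the paper cites condition \ref{trhc4}, you use the equivalent condition \ref{trhc3}) and then concludes by the realization argument of \cite[Theorem 3.5.4(2), Theorem 4.2.1(I.2)]{bsoscwhn}. Your intermediate appeal to Theorem \ref{tstairs}(\ref{itstairs2}), condition E, merely makes explicit a step the cited argument already contains, and your reading of the hypothesis of condition \ref{trhc0} (with $c\ge t_s$ rather than the paper's misprinted $c\ge n_s$) is the intended one.
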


\begin{proof}
1. Immediate from Proposition \ref{pmgc}(2) combined with Theorem \ref{trs} (see conditions \ref{trhc0} and \ref{trhc2} in it).  

2.  %The corresponding Chow-weight 
For $M=\mgcq(X)$ we have   $\chw_{j}^{i}(M_{K})=\ns$ if $(i,j)\in \ii$; see condition \ref{trhc4} of that theorem (or Theorem \ref{main}). Given this statement, one can argue similarly to Theorem 4.2.1(I.2) of \cite{bsoscwhn}.
\end{proof}

\begin{rema}\label{rlast}
1. %More generally
Moreover, for  $X$ as above,  $M= \mgcq(X)$, and any cohomological functor $H$ from %$DM^{eff}_{gm,\mathbb{Z}[\frac{1}{p}]}$
 $\dmgeq$  into %a $\mathbb{Q}-$linear
 an  abelian category $\underline{A}$ one can combine the Chow-weight homology vanishing mentioned in the proof with %and any $q>0$ %we have
Proposition 3.5.1(1)  of ibid. to obtain the following: for any $l,m\in \z$ both $E_2^{-l,m}T(M)$ %$E_2^{-sq}T(M)$ %_{\wchow}(H,M)$
 and %the quotient %object
  $(\grwc^{-l}H^{m-l})(M)$ %\linebreak   (W^{-l}H^{m-l})(M)/(W^{1-s}H_{s+q})(M)$ are certain
	 are subquotients of $H^m(\mgr(P)\lan a_{\ii,l}\ra)$ for some $P\in \spv$ whenever $a_{\ii,l}<+\infty$, and these two objects vanish if $a_{\ii,l}=+\infty$; see %Proposition \ref{pwss} 
	 Definition 1.4.4(3) and Proposition 1.4.5(2) of ibid. for the corresponding notation.

2.  %Recall that the 
 The combination of two of more or less "standard" %) 
 motivic conjectures yields that the %first  implication in Theorem \ref{tleci}(2) is  actually an equivalence. 
 Hodge effectivity condition in Theorem \ref{tmgc}(2) is actually equivalent to our assumptions on Chow groups of $X$. This statement is an easy implication of Proposition 3.5.6 of \cite{bsoscwhn}  (combined with our Theorem \ref{main}).

3. %Moreover, one 
 One can certainly consider Chow-weight spectral sequences for non-geometric objects of $\dmeq$ (or $\dmer$ for any $R$). In particular, one may extend to $\dmeq$ singular and \'etale homology functors similar to the ones mentioned in Theorem \ref{tmgc}(2). Note here that there exist homological functors of this sort that take values in the corresponding ind-completed categories and respect coproducts; see Lemma 2.2 of \cite{krause}.

An important observation here is that these functors convert objects of  $\choweq{}^{\widehat\oplus}$ into (ind-pure) objects of weight $0$
in the corresponding mixed categories; hence these weight spectral sequences degenerate at $E_2$ (cf. Theorem 3.5.4 of \cite{bsoscwhn}). \end{rema}

\appendix
\section{Some motivic statements}\label{sapp}

Let us prove some statements that were used both in \cite{bsoscwhn} and in the current paper. The authors were not able to find these formulations in the literature; yet no originality is claimed. 

We will not introduce any notation or definitions that will be used below;  most of it can be found in \cite{cdint}. %we will only give references to it.

\begin{prop}\label{pmot}
Let $K/k$ be an extension of perfect fields, $f:\spe K\to \spe k$ is the corresponding morphism, and $X$ is a $k$-variety.

Then the following statements are valid.

\begin{enumerate}
\item\label{ie1} The functor $-_K$ in  Definition \ref{dchowm}(2)  is essentially the restriction to $\dmer$ of the functor $f^*:\dmr\to \dmrbl$, and we have  $f^*(\mgr(X))\cong \mgr(X_{K})$. %, where $f^*$ is the corresponding  base field change functor (see Definition \ref{dchowm}(2)).
 %(cf. Proposition \ref{pr1}(1)).

\item\label{ie2} $f^*(\mgrc(X))\cong \mgrc(X_{K})$.

\item\label{ie3}
For an object $N$  of $ \dmer$ we have $N\in \dmer^{\thomr \le 0}$ %(see %the end of \S\ref{smotnot}
%Remark \ref{rhomr}) 
if and only if  
%extend??!
 $\chowm_{0}(N_{k'},R,l)=\ns$ for all $l<0$ and all %$K$ essentially finitely generated 
function fields  ${k'}/k$. 

Moreover,  these conditions are equivalent to the vanishing of   $\chowm_{r}(N_{k'},R,l-r)$ for all $l<0$, $r\ge 0$,  and all %$K$ essentially finitely generated?! N_K*????
function fields  ${k'}/k$; recall here that we identify $\dmer(k')$ with $\dmer(k'{}^{perf})$, where $ k'{}^{perf}$ is the perfect closure of $k'$. 
%uh-invariance of Chow groups along with continuity?!
\end{enumerate}
\end{prop}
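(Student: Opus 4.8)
The plan is to handle the three assertions in order, using throughout the premotivic six-functor formalism of Cisinski--Déglise (\cite{cd}, \cite{cdint}) together with Voevodsky's theory of homotopy invariant sheaves with transfers. For assertion (\ref{ie1}) I would first recall that the base change functor $-_K$ of Definition \ref{dchowm}(2) is, by its very construction, the restriction to $\dmer$ of the monoidal exact pullback $f^*:\dmr\to \dmrbl$ attached to $f:\spe K\to\spe k$. The isomorphism $f^*(\mgr(X))\cong \mgr(X_K)$ is then the compatibility of the premotivic pullback with the infinite $T$-suspension of representables, namely that $f^*$ sends $\Sigma^{\infty}_{T}(X_+)$ to $\Sigma^{\infty}_{T}((X_K)_+)$. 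It remains only to note that $f^*$ preserves effectivity, so that it genuinely restricts to $\dmer$; this is immediate from the generators $\mgr(\sv)$ of $\dmer$ and the monoidality of $f^*$.

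For assertion (\ref{ie2}) I would write $\mgrc(X)=p_!\,R_X$, where $p:X\to\spe k$ is the structure morphism and $R_X=p^*R$ is the unit over $X$, and then apply the base change isomorphism $f^*p_!\cong p'_!\,f'^*$ attached to the cartesian square
\[
\begin{CD}
X_K @>{f'}>> X \\
@V{p'}VV @VV{p}V \\
\spe K @>{f}>> \spe k
\end{CD}
\]
Since $f'^*R_X=f'^*p^*R=(pf')^*R=(fp')^*R=p'^*f^*R=R_{X_K}$, this yields $f^*(\mgrc(X))=f^*p_!R_X\cong p'_!f'^*R_X=p'_!R_{X_K}=\mgrc(X_K)$, which is the desired identification.

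The substantive assertion is (\ref{ie3}). Recalling $R\lan 0\ra=R$, the group $\chowm_0(N_{k'},R,l)=\dmrbl(R[l],N_{k'})=\dmrbl(R,N_{k'}[-l])$ computes, for the point $\spe k'$ (which has Nisnevich cohomological dimension $0$), the sections over $\spe k'$ of the homotopy-$t$-structure cohomology sheaf $\underline{H}^{-l}(N_{k'})$. Since $N\in\dmer^{\thomr\le 0}$ means exactly $\underline{H}^{n}(N)=0$ for all $n>0$ (compare Proposition \ref{pr1}(6) and \cite{bondegl}), the equivalence reduces to the claim that the homotopy invariant sheaves with transfers $\underline{H}^{n}(N)$ (for $n>0$) all vanish precisely when the groups $\underline{H}^{n}(N_{k'})(\spe k')$ all vanish. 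I would obtain this from two inputs: first, the $t$-exactness of $-_K$ for the homotopy $t$-structure and its compatibility with cohomology sheaves, giving $\underline{H}^{n}(N_{k'})\cong(\underline{H}^{n}(N))_{k'}$ and identifying $(\underline{H}^{n}(N))_{k'}(\spe k')$ with the stalk of $\underline{H}^{n}(N)$ at the generic point attached to $k'$ (here the identification $\dmer(k')\cong\dmer(k'\perf)$ enters); second, Voevodsky's fundamental fact that a homotopy invariant Nisnevich sheaf with transfers over a perfect field embeds into the product of its generic stalks, so it vanishes as soon as all its function-field sections do. For the ``Moreover'' part I would use the contraction functors. Writing $R\lan r\ra[l-r]=R(r)[r+l]=(R(r)[r])[l]$ and recalling that $R(r)[r]$ lies in the heart of $\thomr$ (the Milnor-type twist), the adjunction for $\underline{Hom}(R(r)[r],-)$ identifies $\chowm_r(N_{k'},R,l-r)$ with the sections over $\spe k'$ of the $r$-fold contraction $(\underline{H}^{-l}(N_{k'}))_{-r}$, the contraction being an exact endofunctor of the heart. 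The case $r=0$ recovers the untwisted condition, so the twisted vanishing trivially implies the untwisted one; conversely, if $N\in\dmer^{\thomr\le 0}$ then each $\underline{H}^{n}(N)=0$ for $n>0$, whence all their contractions vanish, and so do the twisted groups.

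The main obstacle will be assembling the compatibilities underlying assertion (\ref{ie3}): the $t$-exactness of the base change $-_K$ for the homotopy $t$-structure and the continuity statement identifying the function-field sections of $N_{k'}$ with the generic stalks of $N$. These are essentially folklore consequences of the continuity and localization properties of the motivic formalism of \cite{cd}, but making them precise over an arbitrary --- not necessarily finitely generated --- extension $K/k$, which is exactly why we work with perfect closures throughout, is the delicate point. By contrast, the detection of homotopy invariant sheaves by function fields and the exactness of the contraction functors are standard inputs from Voevodsky's theory, and the base change formula for $f^*p_!$ used in (\ref{ie2}) is part of the established six-functor package.
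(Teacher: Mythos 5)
Your treatment of assertion (\ref{ie2}) contains a genuine error: you compute $\mgrc(X)$ as $p_!R_X=p_!p^*(R)$, but this is the wrong six-functor formula. Proposition 8.10 of \cite{cdint} (which the paper quotes) gives $\mgrc(X)\cong p_*p^!(R)$; the object $p_!p^*(R)$ is the Borel--Moore/compactly-supported \emph{cohomological} object, dual to the one you want. Concretely, for $X=\mathbb{A}^1$ relative purity yields $p_!p^*(R)\cong R(-1)[-2]$, which is not even effective, whereas $\mgrc(\mathbb{A}^1)\cong R\lan 1\ra=R(1)[2]$; one can also see the mismatch from variance, since $p_!p^*$ is contravariant for proper morphisms while $\mgrc$ is covariant on $\schpr$. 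This is not just bookkeeping: your argument is easy precisely because the wrong formula only needs the unconditional exchange $f^*p_!\cong p'_!f'^*$. With the correct formula $\mgrc(X)=x_*x^!(R)$ the exchanges actually required are $f^*x_*\cong x_{K*}f_X^*$ and $f_X^*x^!\cong x_K^!f^*$, and neither belongs to the ``established six-functor package'': $f$ is neither smooth nor of finite type, so these base-change isomorphisms fail in general. The paper obtains them from the regularity of $f$ (via Popescu--Spivakovsky, using that $K/k$ is an extension of perfect fields) together with continuity and generation by twists of $\dmcdh(-,R)$, citing Propositions 4.3.15 and 4.3.12 of \cite{cd}. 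A related, milder gap occurs in your assertion (\ref{ie1}): the claim that $f^*$ sends $\Sigma_T^{\infty}(X_+)$ to $\Sigma_T^{\infty}((X_K)_+)$ covers smooth $X$, but the proposition allows arbitrary $k$-varieties, whose motives are defined via the cdh/Kelly extension rather than as suspension spectra of representables in the category with transfers; this is why the paper instead computes $\mgr(X)=x_!x^!(R)$ and runs the same regular-base-change machinery (the exchange $f_X^*x^!\cong x_K^!f^*$ appears already there). Your version of (\ref{ie1}) is repairable by arguing with cdh-representables, but as written it does not cover the stated generality.

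Your assertion (\ref{ie3}) is essentially sound and runs parallel to the paper's argument in different packaging: the paper invokes D\'eglise's homotopy-module characterization (Corollary 5.2 and Theorem 3.7 of \cite{degmod}), which bundles exactly the function-field detection and continuity statements you use; the identification $\inli_j \dmer(\mgr(X_j)\lan r\ra[l-r],N)\cong \chowm_{r}(N_{k'},R,l-r)$ is carried out via the adjunction $x_{j\#}\dashv x_j^*$ and the continuity property of $\dmcdh$; and the reduction of the twisted conditions to $r=0$ is done by noting that $R\lan r\ra[-r]$ is a retract of $\mgr(\mathbb{G}_m^r)$ and citing Theorem 4.19 of \cite{3} (or \cite{bgn}), which plays the role of your contraction-functor argument. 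The compatibilities you flag as delicate ($t$-exactness of $-_{k'}$ for $\thomr$ over arbitrary extensions, and the stalk identification) are indeed available in the cited literature, so part (\ref{ie3}) stands modulo making those references precise; but part (\ref{ie2}) as proposed fails at the first step and must be redone along the lines above.
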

\begin{proof}
\ref{ie1}. We pass to the "stable" motivic category $\dmrb\cong \dmcdh(\spe k,R)\supset \dmer$  (see %\S1.2,
 Remark 1.2,  Definition 1.5, and Proposition 8.1(c) of \cite{cdint}). Then the second part of the assertion implies that we can define 
 $-_K$ as the restriction to $\dmer$ of the functor $f^*$ indeed.

 To obtain the isomorphism in question we recall that $\mgr(X)$ can be computed as $x_!x^!(R)$, where $x:X\to \spe k$ is the structure morphism; see the formula (8.7.1) (and \S1.6)  of ibid.  
We take the corresponding Cartesian square
\begin{equation}\label{ebch} 
  \begin{CD}
 X_K@>{f_X}>>X\\
@VV{x_K}V@VV{x}V \\
\spe K@>{f}>>\spe k
\end{CD}\end{equation}
%Now we 
 and recall that the categories  $\dmcdh(-,R)$ give a {\it motivic category} over  the category of noetherian $k$-schemes of finite dimension  %{\it which has the property of continuity with respect to arbitrary projective systems with affine transition maps}
 that is {\it continuous with respect to the twists $\lan n\ra$} for $n\in \z$; see Proposition 4.3 and Theorem 5.1 of ibid.  for this statement,  and %D4.3.2 Example 1.4.5: $\z$-generated?! Definition 1.1.41?!
 Definitions 2.4.45 and 4.3.2 of \cite{cd} for the corresponding  definitions. 
Thus we can apply the base change isomorphism (see Theorem 2.4.50(4) of ibid.)  to obtain $f^*x_!\cong x_{K!}f_X^*$. 

Next, 
%Since 
 the motivic category $\dmcdh(-,R)$ is {\it generated by the aforementioned twists} (see Definition 2.5 and Proposition 4.3 of \cite{cdint}) and the morphism $f$ is {\it regular} immediately from the Popescu-Spivakovsky theorem (see Theorem 4.1.5 of \cite{cd}). Thus we can apply %Propositions 4.3.13 and 4.3.15 
 Propositions 4.3.12 of ibid. to obtain $f_X^*x^!\cong x_{K}^!f^*$. It remains to recall that $f^*R_{\spe k}\cong R_{\spe K}$ (see \S1.1.1 of ibid.); hence $f^*(\mgr(X))\cong x_{K!} x_{K}^! R_{\spe K} \cong \mgr(X_{K})$ indeed. 

%Now we consider the obvious 

\ref{ie2}. The %argument is just a little different 
 proof differs just a little from the previous one. Proposition 8.10 of \cite{cdint} says that $\mgrc(X)$ can be computed as $x_*x^!(R)$; here we use the notation of  \cite{cd} %; respectively, we 
 (and write $x_*$ instead of $Rx_*$). %As we have explained above, 
Next, the %arguments 
 observations made above yield that we can apply %the assumptions for 
Proposition 4.3.15 of ibid. %are fulfilled; thus we have 
 to obtain $f^*x_*\cong X_{K*}f_X^*$. Combining this statement with the %aforementioned
 isomorphisms mentioned above
 we obtain $f^*(\mgrc(X))\cong x_{K*} x_{K}^! R_{\spe K} \cong \mgrc(X_{K})$ indeed. 

\ref{ie3}. %Perfect closure??
$N$ belongs to $ \dmer^{\thomr \le 0}$ if and only if for any function field $k'/k$, any presentation $\spe k'=\prli X_j$ for $X_j\in \sv$, $r\ge 0$,  and $l< 0$ we have  $\inli_j \dmer(\mgr(X_j)\lan r\ra [l-r],N)=\ns$; see Corollary 5.2, %\S5.7(1),  and 
 \S1.18, and Theorem 3.7 of \cite{degmod}.\footnote{Alternatively, one may apply Corollary 2.3.12 and Theorem 3.3.1 of  \cite{bondegl}; see also Definitions 1.3.10  and 3.2.3, and the formula (2.3.4.a) of ibid.}

Next we recall that the motives $\mgr(X_j)$ can also be computed as $x_{j\#}R_{X_j}=x_{j\#}{x_j}^*R_{\spe k}$, where $x_j:X_j\to \spe k$ is the structure morphisms and $x_{j\#}$ is left adjoint to ${x_j}^*$; see the formula (8.5.3) of \cite{cdint} (yet we omit L's in the notation of loc. cit.). Thus $ \dmer(\mgr(X_j)\lan r\ra [l-r],N)\cong \dmcdh(X_j,R)(R_{X_j} \lan r\ra [l-r],{x_j}^*N)$. We can pass to the limit in this isomorphism using the aforementioned continuity property (see Definition 2.5 of ibid.) %  Thus we 
 to obtain  $\inli \dmcdh(X_j,R)(R_{X_j}  \lan r\ra [l-r],{x_j}^*N)\cong \dmcdh(\spe k',R)(R_{\spe k'}\lan r\ra [l-r],N_{\spe k'})$. The latter group is isomorphic to $\chowm_{r}(N_{k'},R,l-r)$ since we can replace $k'$ by its perfect closure; see Proposition 8.1 of \cite{cdint}.

It suffices to verify the vanishing in question for $r=0$. This statement easily follows from Proposition 5.2.6(8) and Remark 5.2.7(7) of \cite{bgn}. Moreover, it can be easily deduced from the well-known Theorem 4.19 of \cite{3} %Proposition 1.4.2(1) of \cite{bger}  the obvious 
 along with the fact that $R\lan n\ra[-n]$ is a retract of $\mgr(\mathbb{G}_m^n)$ (where $\mathbb{G}_m=\afo\setminus \ns$). % along with the well-known Theorem 4.19 of \cite{3}.   
%Since the $t$-structure $\thomr$ is non-degenerate (see Corollary 5.2 of \cite{degmod} once again), it suffices to check that the $t$-homology of 
\begin{comment} 
We recall that we can study $\thomr$ using the results of \cite{bondegl}; see Corollary 2.3.12 and Example 2.3.13(1) of ibid. Note that the corresponding {\it dimension function} $\delta$ over $k$ sends the spectrum of a function field $k'$ over $k$ into the transcendence degree of $k'/k$.
Applying Theorem 3.3.1 of ibid. we obtain that  $N$ belongs to $ \dmer^{\thomr \le 0}$ if and only if for any  function field $k'/k$, any presentation $\spe k'=\prli X_j$ for $X_j\in \var$, $r\ge 0$,  and $l< 0$ we have  \begin{equation}\label{ebm}  \inli_j \dmcdh(\spe k,R)
(x_{j!}R_{X_j}\lan r+\delta(k')\ra [l-r],N)=\ns,\end{equation} where $x_j:X_j\to \spe k$ are the structure morphisms; see  Definitions 1.3.10  and 3.2.3 of ibid. Next, we can certainly assume that all these $X_j$ are smooth and of dimension $\delta(k')$ over $k$; hence $x_{j!}R_{X_j}\lan \delta(k')\ra\cong \mgr(X_j)$ (see the formula (2.3.4.a) of ibid.). 
; see Example 2.3.13, Definition 3.2.3,  and Theorem 3.3.1 of \cite{bondegl}.
\end{comment}
\end{proof}


\begin{thebibliography}{1}

%\bibitem[AJS03]{talosa} Alonso L., Jerem\'ias A., Souto M.J., Construction of $t$-structures and equivalences of derived categories// Trans. of the AMS, 355(6), 2003, 2523--2543.

%\bibitem[Aso17]{asok} Asok A., Stably $\afo$-connected varieties and universal triviality of $CH_0$// J. of Pure and Appl. Algebra, 221(7), 1497--1506.

\bibitem[Ayo08]{ayapp} Ayoub J., The slice filtration on DM(k) does not preserve geometric motives. Appendix to A. Huber's "Slice filtration on motives and the Hodge conjecture"// Math. Nachr. 281(12), 2008, 1764--1776. 

\bibitem[Ayo17]{ayconj} Ayoub J., Motives and algebraic cycles: a selection of conjectures and open questions,  in:  Hodge theory and $L^2$-analysis, 87--125, Adv. Lect. Math. (ALM), 39, Int. Press, Somerville, MA, 2017.  %the volume "Hodge theory and $L^2$ analysis" in honour of S. Zucker, 2015, \url{http://user.math.uzh.ch/ayoub/PDF-Files/Article-for-Steven.pdf}

%\bibitem[Ayo18]{ayoubcon} Ayoub J., Topologie feuillet\'ee et la conservativit\'e des r\'ealisations classiques en caract\'eristique nulle, preprint, 2018, \url{http://user.math.uzh.ch/ayoub/PDF-Files/Feui-ConCon.pdf}

%\bibitem[Bac17]{bachinv} Bachmann T.,  On the invertibility of motives of affine quadrics// Doc. Math. 22 (2017), 363--395.

%\bibitem[Bac18]{bacons} Bachmann T., On the conservativity of the functor assigning to a motivic spectrum its motive// Duke Math. J. 167(8), 2018, 1525--1571. %\url{http://arxiv.org/pdf/1506.07375v3.pdf}

%\bibitem[BaS01]{bashli} Balmer P.,  Schlichting M., Idempotent completion of triangulated categories// Journal of Algebra 236, no. 2 (2001), 819-834.

%\bibitem[Bei87]{beilh} Beilinson A., Height pairing between algebraic cycles, in:
% K-theory, Arithmetic and Geometry, Lecture Notes in Mathematics 1289, Springer,
% 1987, 1--26.

%\bibitem[Bei87b]{bei87} Beilinson A., On the derived category of perverse sheaves,  same volume, 27--41.

\bibitem[BeV08]{bev} Beilinson A., Vologodsky V., A DG guide to Voevodsky motives//  Geom. Funct. Analysis, vol. 17(6), 2008, 1709--1787.


%\bibitem[BlE08]{esnblick} Blickle M., Esnault H., Rational singularities and rational points// Pure and Applied Mathematics Quarterly, 4(3), 729--741.


\bibitem[Blo80]{blect} Bloch S.,  Lectures on algebraic cycles, Duke University Mathematics series IV, 1980. %vol. 16. Cambridge University Press, 2010.


%\bibitem[BEL05]{blesnl} Bloch S., Esnault H., Levine M., Decomposition of the diagonal and eigenvalues of Frobenius for Fano hypersurfaces //American Journal of math. 127.1, 2005, 193--207.


%\bibitem[BlS83]{blosri} Bloch S., Srinivas V.,  Remarks on correspondences and algebraic cycles// American Journal of Mathematics, 1983, 1235--1253.

\bibitem[Bon09]{mymot} Bondarko M.V., {Differential graded motives:  weight complex, weight filtrations and spectral sequences for
 realizations; Voevodsky vs. Hanamura}// J. of the Inst.  of Math. of Jussieu,  v.8(1), 2009, 39--97, see also \url{http://arxiv.org/abs/math.AG/0601713}


\bibitem[Bon10]{bws} Bondarko M.V., Weight structures vs.  $t$-structures; weight filtrations,  spectral sequences, and complexes (for motives and in general)//  J. of K-theory, v. 6(3), 2010, 387--504, see also \url{http://arxiv.org/abs/0704.4003}


\bibitem[Bon11]{bzp} Bondarko M.V., $\mathbb{Z}[\frac{1}{p}]$-motivic resolution of singularities//  Compositio Math., vol. 147(5), 2011, 1434--1446.
%; see also \url{http://arxiv.org/abs/1002.2651}


%\bibitem[Bon13]{bgern} Bondarko M.V., Gersten weight structures for motivic homotopy categories; direct summands of cohomology of function fields  and  coniveau spectral sequences,  preprint, \url{http://arxiv.org/abs/1312.7493}

%\bibitem[Bon14]{brhtp} Bondarko M.V., On  perverse homotopy $t$-structures,  coniveau spectral sequences, cycle modules, and relative Gersten weight structures, preprint, \url{http://arxiv.org/abs/1409.0525}

%\bibitem[Bon15]{bmm} Bondarko M.V., Mixed motivic sheaves (and weights for them) exist if `ordinary' mixed motives do,  Compositio Mathematica, vol. 151, 2015, 917--956.

%\bibitem[Bon15b]{bkillw} Bondarko M.V., On morphisms killing weights, weight complexes, and Eilenberg-Maclane (co)homology of spectra,  preprint, \url{http://arxiv.org/abs/1509.08453}

 %\bibitem[Bon16]{binfeff} Bondarko M.V., On  infinite effectivity of motivic spectra and the vanishing of their motives,  2016, to appear in Doc. Math., \url{http://arxiv.org/abs/1602.04477} %????

%\bibitem[Bon16b]{bpgws} Bondarko M.V., On  torsion pairs,  (well generated) weight structures,  adjacent $t$-structures, and related    (co)homological functors, preprint, 2016, \url{https://arxiv.org/abs/1611.00754}

\bibitem[Bon18a]{binters} Bondarko M.V.,  Intersecting the dimension  and slice filtrations for motives//  Homology, Homotopy and Appl., vol. 20(1), 2018, 259--274.
%Bondarko M.V., Intersecting the dimension filtration with the slice one for (relative) motivic categories, preprint, 2016, \url{http://arxiv.org/abs/1603.09330}


\bibitem[Bon18b]{bgn} Bondarko M.V., Gersten weight structures for motivic homotopy categories; retracts of cohomology of function fields,  motivic dimensions, and  coniveau spectral sequences, preprint, 2018, \url{https://arxiv.org/abs/1803.01432}

%\bibitem[Bon18c]{bondzeta} Bondarko M.V., Conservativity of realizations implies that numerical motives are Kimura-finite and motivic zeta functions are rational, preprint, 2018, \url{https://arxiv.org/abs/1807.10791}

\bibitem[Bon18c]{bwcomp}   Bondarko M.V.,  On weight complexes,  pure functors, and detecting  weights, preprint, 2018, \url{https://arxiv.org/abs/1812.11952}


\bibitem[BoD17]{bondegl} Bondarko M.V., Deglise F., Dimensional  homotopy $t$-structures in motivic homotopy theory// Adv. in Math., vol. 311 (2017), 91--189. %, \url{http://www.sciencedirect.com/science/article/pii/S0001870816313615}

 %\bibitem[BoI15]{bonivan} Bondarko M.V., Ivanov M.A., On Chow weight  structures for $cdh$-motives with integral coefficients// Algebra i Analiz, v. 27 (2015), i. 6, 14--40;  transl. in St. Petersburg Math. J. 27(6),  2016, 869--888.   % see also \url{http://arxiv.org/abs/1506.00631}

\bibitem[BoK18]{bokum}  Bondarko M.V., Kumallagov D.Z., On Chow weight structures without projectivity and resolution of singularities//   Algebra i Analiz 30(5), 2018, 57--83; transl. in St. Petersburg Math. J. 30(5), 2019, 803--819. %; see also \url{https://arxiv.org/abs/1711.08454}. 

%\bibitem[BoL16]{bkl}   Bondarko M.V., Luzgarev A. Ju., On relative $K$-motives,  weights for them, and negative $K$-groups, preprint, 2016, \url{http://arxiv.org/abs/1605.08435}

\bibitem[BoS14]{bsoscwho} Bondarko M.V., Sosnilo V.A., Detecting the $c$-effectivity of motives, their weights, and dimension via Chow-weight (co)homology: a "mixed motivic decomposition of the diagonal", preprint, 2014, \url{http://arxiv.org/abs/1411.6354}. %; see also \url{https://www.researchgate.net/publication/340849991_On_Chow-weight_homology_of_geometric_motives}.


%\bibitem[BoS15]{bsnull} Bondarko M.V., Sosnilo V.A., A Nullstellensatz for triangulated categories//  Algebra i Analiz, v. 27 (2015), i. 6, 41--56; transl. in St. Petersburg Math. J. 27(6), 2016,  889--898. 

\bibitem[BoS18a]{bos}  Bondarko M.V., Sosnilo V.A., Non-commutative localizations of  additive categories and weight structures;  applications to birational motives//  J. of the Inst. of Math. of Jussieu, vol. 17(4), 2018,  785--821.


\bibitem[BoS18b]{bonspkar} Bondarko M.V., Sosnilo V.A., On constructing weight structures and extending them to idempotent extensions// Homology, Homotopy and Appl., vol. 20(1), 2018, 37--57.

\bibitem[BoS18c]{bsosnl} Bondarko M.V., Sosnilo V.A., On the weight lifting property for localizations of triangulated categories, Lobachevskii J. of Math., vol. 39(7), 2018, 970--984. %  \url{https://arxiv.org/abs/1510.03403}

\bibitem[BoS20]{bsoscwhn} Bondarko M.V., Sosnilo V.A., On Chow-weight homology of geometric motives, preprint, 2020, %\url{http://arxiv.org/abs/1411.6354}; see also 
 \url{https://www.researchgate.net/publication/340849991_On_Chow-weight_homology_of_geometric_motives}.

\bibitem[BoS19]{bsnew} Bondarko M.V., Sosnilo V.A., On purely generated $\alpha$-smashing weight structures and weight-exact localizations// J. of Algebra 535(2019),  407--455.


%\bibitem[CiD09]{cdhom} Cisinski D., D\'eglise F., Local and stable homological algebra in Grothendieck abelian categories// Homology, Homotopy and Appl., 11(1), 2009,  219--260.


\bibitem[CiD15]{cdint} Cisinski D.-C., D\'eglise F., Integral mixed motives in equal characteristic, Documenta Mathematica, Extra Volume: Alexander S. Merkurjev's Sixtieth Birthday, 2015, 145--194.
 
\bibitem[CiD19]{cd} Cisinski D.-C., D\'eglise F., Triangulated categories of mixed motives,  2019, Springer Monographs in Mathematics, 
\url{https://doi.org/10.1007/978-3-030-33242-6}


%\bibitem[CiD16]{cdet} Cisinski D., D\'eglise F., \'Etale motives// Comp. Math., vol. 152(3), 2016, 556--666.


%\bibitem[Deg08]{degdoc} D\'eglise F., Around the Gysin triangle II// Documenta Mathematica, v. 13, 2008, 613--675.


\bibitem[Deg11]{degmod} D\'eglise F.,  Modules homotopiques (Homotopy modules)// %preprint, \url{http://arxiv.org/abs/0904.4747}
Doc. Math. 16 (2011), 411--455. 


%\bibitem[DeK73]{delkatz}  Deligne P., Th\'eor\'eme d'int\'egralit\'e, Appendix to "Le niveau de la cohomologie des
%intersections compl\'etes" by N. Katz, Expos\'e XXI in SGA 7, Lecture Notes Math. 340, 363--400, Springer-Verlag, New York, 1973.  

%\bibitem[Esn03]{esntrivch0} Esnault H., Varieties over a finite field with trivial Chow group of $0$-cycles have a rational point// Inv. math., 151(1), 2003, 187--191.

%\bibitem[EsL93]{esle} Esnault H., Levine M., Surjectivity of cycle maps// Ast\'erisque 218 (1993), 203--226.

%\bibitem[FaR05]{fak} Fakhruddin N., Rajan, C.S., Congruences for rational points on varieties over finite fields// Math. Ann. 333(4), 2005, 797--809. 


%\bibitem[FrK88]{frki} Freitag E., Kiehl R. Etale cohomology and the Weil conjecture, Springer, 1988, 343 p. 

\bibitem[GiS96]{gs} Gillet H., Soul\'e C.,  Descent, motives and $K$-theory// J.  f. die reine und ang. Math. v. 478, 1996, 127--176.

%\bibitem[GoG12]{gorchgul} Gorchinskiy S.,  Guletskij V., Transcendence degree of zero-cycles and the structure of Chow motives// Cent. Eur. J. Math., 10(2), 2012, 559--568.


%\bibitem[GoG13]{gorgul} Gorchinskiy S.,  Guletskij V., Non-trivial elements in the Abel-Jacobi kernels of higher-dimensional varieties// Advances in Math., 241 (2013), 162--191.

%\bibitem[Han99]{ha3} Hanamura M. Mixed motives and algebraic cycles, III// Math. Res. Letters 6, 1999,  61--82.

%\bibitem[HuK06]{hubka} Huber A., Kahn B. The slice filtration and mixed Tate motives// Compos. Math. 142(4), 2006, 907--936.


%\bibitem[Kah05]{kahnconj}  Kahn B., Algebraic K-theory, algebraic cycles and arithmetic geometry, in: Handbook of K-theory, Springer Berlin Heidelberg, 2005, 351--428.


%\bibitem[Kah09]{kahnzeta}  Kahn B., Zeta functions and motives//  Pure Appl. Math. Q. 5(1), 2009, 507--570.



%\bibitem[KaL10]{lekahn} Kahn B., Levine M., Motives of Azumaya algebras// J. of the Inst. of Math. of Jussieu, vol. 9 (2010), 481--599.

\bibitem[KaS17]{kabir} Kahn B., Sujatha R., Birational motives, II: triangulated birational motives//  Int. Math. Res. Notices 2017(22), 2017,  6778--6831.

\bibitem[Kel17]{kellyast} Kelly S., Voevodsky motives and ldh-descent// Asterisque No. 391 (2017).

%\bibitem[Kel12]{kellyth} Kelly S., Triangulated categories of motives in positive characteristic, PhD thesis of Universit\'e Paris 13 and of the Australian National University,  2012, \url{http://arxiv.org/abs/1305.5349}



%\bibitem[KeS14]{kellyweighomol} Kelly S., Saito S., Weight homology of motives, preprint, \url{http://www.lcv.ne.jp/~smaki/articles/mhf-final.pdf}
%{http://www.lcv.ne.jp/~smaki/articles/mhf-documenta-sty.pdf}
%\bibitem[KeS17]{kellyweighomol} Kelly S., Saito S., Weight homology of motives, Int. Math. Res. Notices 2017(13), 2017, 3938--3984.


%\bibitem[Kol96]{kollarb} Koll\'ar J., Rational curves on algebraic varieties, Springer Science \& Business Media, vol. 32 (1996).
 

\bibitem[Kra00]{krause} Krause H., Smashing subcategories and the telescope conjecture  --- an algebraic approach// Invent. math. 139 (2000), 99--133.

%\bibitem[Lat96]{later} Laterveer R.,  Algebraic varieties with small Chow groups// J. Math. Kyoto Univ. 38(1996), 673--694.

%\bibitem[LYZR16]{lyz} Levine M., Yang Y.,  Zhao G., Riou J., Algebraic elliptic cohomology theory and flops, I, preprint, 2016, \url{http://arxiv.org/abs/1311.2159v3}



\bibitem[MVW06]{vbook} Mazza C., Voevodsky V.,  Weibel Ch.,  Lecture notes on motivic cohomology, Clay Mathematics Monographs, vol. 2, 2006. %, see also \url{http://www.math.rutgers.edu/~weibel/MVWnotes/prova-hyperlink.pdf}

%\bibitem[Nee01]{neebook} Neeman A., Triangulated Categories. Annals of Mathematics Studies 148 (2001), Princeton University Press, viii+449 pp.

\bibitem[Pau08]{konk} Pauksztello D., Compact cochain objects in triangulated categories and co-t-structures//     Central European Journal of Mathematics, vol. 6(1), 2008, 25--42.


%\bibitem[Par94]{paranconn}  Paranjape K.H., Cohomological and cycle-theoretic connectivity// Annals of Mathematics vol 139(3), 1994, 641--660.

%\bibitem[PeS08]{petershodge} Peters C.A.M., Steenbrink J.H.M., Mixed Hodge structures, Springer Science \& Business Media, vol. 52, 2008. 

%\bibitem[Sos15]{sosn}  Sosnilo V.A., Weight structures in localizations (revisited) and the weight lifting property, preprint,  \url{http://arxiv.org/abs/1510.03403}

\bibitem[Sos19]{sosnwc}  Sosnilo V.A., Theorem of the heart in negative $K$-theory for weight structures// Doc. Math.  24 (2019), 2137--2158.


%\bibitem[Via17]{vialmotab} Vial C., Remarks on motives of abelian type// Tohoku Math. Journal 69.2 (2017), 195--220.
%preprint, \url{http://arxiv.org/abs/1112.1080} 

\bibitem[Voe00]{1} Voevodsky V., Triangulated category of motives,  in:  Voevodsky V.,  Suslin A., and  Friedlander E.,
Cycles, transfers and motivic homology theories, Annals of  Mathematical studies, vol. 143, Princeton University Press,  2000,  188--238.
 
 
 \bibitem[Voe00b]{3} Voevodsky V., Cohomological theory of presheaves with transfers, same volume,  87--137.

 %\bibitem[Voe10]{vcan} Voevodsky V., Cancellation theorem. Doc. Math., Extra volume: Andrei A. Suslin sixtieth birthday, 2010. 671–685. %, 2010.

%\bibitem[FrV00]{4} Friedlander E.,  Voevodsky V. Bivariant cycle cohomology,
%same volume,  138--187. 

% \bibitem[Voi14]{voibook} Voisin C., Chow Rings, Decomposition of the diagonal, and the topology of families, Annals of
% Mathematical studies, vol. 187, Princeton University Press, 2014.

\bibitem[Wil09]{wild} Wildeshaus J., Chow motives without projectivity// Compositio Mathematica, v. 145(5), 2009, 1196--1226.

\end{thebibliography}
\end{document}